\documentclass[11pt,twoside, leqno]{article}

\usepackage{amsthm}
\usepackage{amssymb}
\usepackage{amsmath}
\usepackage{mathrsfs}
\usepackage{txfonts}
\usepackage[active]{srcltx}

\allowdisplaybreaks

\pagestyle{myheadings}\markboth{R. Jiang} {Poisson Equation}

\textwidth=15cm \textheight=21.08cm \oddsidemargin 0.45cm
\evensidemargin 0.45cm

\parindent=13pt

\def\rr{{\mathbb R}}
\def\rn{{{\rr}^n}}

\def\zz{{\mathbb Z}}

\def\cn{{\mathbb N}}

\def\cm{{\mathcal M}}
\def\car{{\mathcal R}}

\def\fz{\infty}
\def\az{\alpha}
\def\supp{{\mathop\mathrm{\,supp\,}}}

\def\loc{{\mathop\mathrm{\,loc\,}}}
\def\Lip{{\mathop\mathrm{\,Lip\,}}}
\def\lip{{\mathop\mathrm{\,lip\,}}}
\def\lz{\lambda}
\def\dz{\delta}

\def\kz{\kappa}
\def\bz{\beta}
\def\ro{\rho}

\def\gz{{\gamma}}

\def\sz{\sigma}
\def\pa{\partial}

\def\wz{\widetilde}
\def\nz{\nabla}

\def\hs{\hspace{0.3cm}}

\def\r{\right}
\def\lf{\left}

\def\la{\langle}
\def\ra{\rangle}

\newtheorem{thm}{Theorem}[section]
\newtheorem{lem}{Lemma}[section]
\newtheorem{prop}{Proposition}[section]
\newtheorem{rem}{Remark}[section]
\newtheorem{cor}{Corollary}[section]

\numberwithin{equation}{section}

\begin{document}
\arraycolsep=1pt
\title{\Large\bf Gradient Estimate for Solutions to Poisson Equations
\\in Metric Measure Spaces\footnotetext{\hspace{-0.35cm} 2000 {\it
Mathematics Subject Classification}. 31C25; 31B05; 35B05; 35B45
\endgraf{\it Key words and phrases. {\rm Moser-Trudinger inequality; Poincar\'e inequality;
Poisson equation; Riesz Potential; Sobolev inequality; curvature
\endgraf Renjin Jiang was partially supported by the Academy of Finland grants 120972 and
131477.}}
}}

\author{Renjin Jiang}
\date{ }
\maketitle
\begin{center}
\begin{minipage}{13.5cm}\small
{\noindent{\bf Abstract.} Let $(X,d)$ be a complete, pathwise
connected metric measure space with a locally Ahlfors $Q$-regular measure $\mu$, where $Q>1$.
Suppose that $(X,d,\mu)$ supports a (local) $(1,2)$-Poincar\'e inequality
and a suitable curvature lower bound. For the Poisson equation $\Delta u=f$ on
$(X,d,\mu)$, Moser-Trudinger and Sobolev inequalities are established
for the gradient of $u$. The local H\"older
continuity with optimal exponent of solutions is obtained. }
\end{minipage}
\end{center}
\vspace{0.0cm}

\section{Introduction}
\hskip\parindent  Let $M$ be an $n$-dimensional ($n\ge 2$) complete, connected
Riemannian manifold with Riemannian metric $\ro$. Denote by $\Delta$,
$\nz$ the Laplace-Beltrami operator and the gradient on $M$, respectively.
Assume that the Ricci curvature is bounded from below by a constant $K\in \rr$,
i.e.,
\begin{equation}\label{1.1x}
Ric_x(X,X)\ge -K|X|^2,\ \ \ \forall \, x\in M, \ X\in T_xM.
\end{equation}

Let $p$ and $\{P_t\}_{t>0}$ be the heat kernel and heat semigroup of the
Laplace-Beltrami operator on $M$, respectively.
In 1986, a breakthrough was made by Li and Yau in \cite{ly86}, where they obtained
pointwise estimates on $p$ and the gradient of $p$, $\nz p$. When $M$
has non-negative Ricci-curvature, their estimates read as:
$$\frac{C}{V(x,\sqrt t)}\exp\lf\{-\frac{\ro(x,y)^2}{ct}\r\}\le p(x,y,t)\le
\frac{C}{V(x,\sqrt t)}\exp\lf\{-\frac{\ro(x,y)^2}{\wz c t}\r\},$$
$$|\nz_x p(x,y,t)|\le \frac{C}{\sqrt t V(x,\sqrt t)}\exp\lf\{-\frac{\ro(x,y)^2}{ct}\r\},$$
where $V(x,\sqrt t)$ denotes the volume of the metric ball $B(x,\sqrt t)$. Li-Yau type
estimates have turned out to be powerful tools in many branches of modern mathematics,
see, for example, \cite{lst01,wz03} for applications to Poisson equation
on Riemannian manifold with non-negative Ricci curvature.

On the other hand, Gross \cite{gro1} derived the remarkable Gaussian Sobolev
inequality
$$\int_\rn |f(x)|^2\ln |f(x)|\,d\nu(x)\le \int_{\rn}|\nz f(x)|^2\,d\nu(x)+
\|f\|^2_{L^2(\nu)}\ln \|f\|_{L^2(\nu)},$$
where $\nu$ denotes the Gaussian measure on $\rn$, which is also referred to
as the logarithmic Sobolev inequality. While the classical Sobolev inequality highly
depends on the dimension $n$, the logarithmic Sobolev inequality is uniform
in all dimension $n$, which enables one to extend it to infinite dimension.
Moreover, when passing from Euclidean spaces to Riemannian manifolds, the
logarithmic Sobolev inequality (in different forms) even reflects some
deep geometric properties.

Recall that ``square of the length of the gradient", which is due to Bakry and
Emery \cite{be2}, is defined as
$$\Gamma_2(u,u)=\frac 12\Delta(|\nz u|^2)-\la\nz\Delta u,\nz u\ra,\ \ u\in C^\fz(M).$$
The diffusion semigroup is said to have curvature greater or equal to some
$K\in\rr$, if
\begin{equation}\label{1.2x}
\Gamma_2(u,u)\ge -K\la \nz u,\nz u\ra, \ \ \forall u\in C^\fz(M).
\end{equation}
It is well known that \eqref{1.2x} is equivalent to $\eqref{1.1x}.$
Moreover, they are all equivalent to:
\begin{equation}\label{1.3x}
P_t(u^2)-(P_t u)^2\le \frac{e^{2Kt}-1}{K}P_t(|\nz u|^2), \ \ \forall t\ge 0, \ \forall u\in C_c^\fz(M),
\end{equation}
\begin{equation*}
P_t(u^2\log u^2)-(P_t u^2)\ln (P_t u^2)\le \frac{2(e^{2Kt}-1)}
{K}P_t(|\nz u|^2), \ \ \forall t\ge 0, \ \forall u\in C_c^\fz(M),
\end{equation*}
see \cite{bak1}. Wang \cite{wa04} showed that \eqref{1.1x} is also equivalent to
the so-called dimension-free Harnack inequality; see also \cite{wa97}.

Our main aim in this paper is to provide a semigroup approach via
the logarithmic Sobolev inequality \eqref{1.3x}, instead of Li-Yau type estimates
for the gradient of the heat kernel, to study the
local behavior of solutions to the Poisson equation $\Delta u=f$. Taking a Riemannian manifold that
satisfies \eqref{1.3x} as a guiding example, we will single out the crucial
assumptions necessary for our semigroup approach, by formulating the arguments in an
abstract metric space. Our results indicate that already the logarithmic Sobolev inequality
\eqref{1.3x} together with a $2$-Poincar\'e inequality (see \eqref{1.1} below) is sufficient
to guarantee Euclidean type local behavior of solutions to Poisson equation.

Let us now describe the metric setting. Let $(X,d)$ be a complete, pathwise connected
metric measure space. Suppose that  $(X,d)$ is endowed with a
locally $Q$-regular measure $\mu$, $Q>1$, where local $Q$-regularity means
that there exist constants $C_Q\ge1$ and $R_0\in (0,\fz]$
such that for every $x\in X$ and
all $r\in (0, R_0)$,
$$C_Q^{-1}r^Q\le \mu(B(x,r))\le C_Qr^Q.$$
The reader interested in Riemannian manifolds should here think $X$ to be a
weighted Riemannian manifold.

By the work of Buser \cite{bu82}, each complete Riemannian manifold with Ricci-curvature bounded from below
admits a local $2$-Poincar\'e inequality. Correspondingly, we assume a (weak) $2$-Poincar\'e inequality
on $(X,d,\mu)$. That is, there exist $C_P>0$ and $\lz\ge 1$ such that for all Lipschitz
functions $u$ and each ball $B_r(x)=B(x,r)$ with $r< R_0$,
\begin{equation}\label{1.1}
\fint_{B_r(x)}|u-u_{B_r(x)}|\,d\mu\le
C_Pr\lf(\fint_{B_{\lz r}(x)}[\Lip u]^2\,d\mu\r)^{1/2},
\end{equation}
where and in what follows, for each ball $B\subset X$, $u_B=\fint_B u\,d\mu=\mu(B)^{-1}\int_{B}u\,d\mu$, and
\begin{equation*}
\Lip u(x)=\limsup_{r\to 0}\sup_{d(x,y)\le r}\frac{|u(x)-u(y)|}{r}.
\end{equation*}
Although our results work for $\lz>1$ as well, we will assume throughout the
paper, that $\lz=1$, for simplicity. See \cite{kei03,hek,kz08} for more about the
Poincar\'e inequality on metric measure spaces.


For a locally Lipschitz continuous function $u$, define its $H^{1,p}(X)$ norm ($p>1$) by
$$\|u\|_{H^{1,p}(X)}:=\|u\|_{L^p(X)}+\|\Lip u\|_{L^p(X)}.$$
Then the Sobolev space $H^{1,p}(X)$ is defined to be the completion
of the set of all locally Lipschitz continuous functions $u$ with $\|u\|_{H^{1,p}(X)}<\fz$.
By the work of Cheeger \cite{ch}, we can assign a derivative to each Lipschitz
function $u$. In what follows, let $D$ be a Cheeger derivative operator
in $(X,d,\mu)$. It is shown in \cite{ch} that $|Du|$ is comparable to $\Lip u$ for each locally Lipschitz
continuous function $u$, and $D$ satisfies the Leibniz rule;
see Section 2 for details. Actually,  the construction of $D$ is irrelevant
for our approach as long as $D$ has the properties above and comes with an
associated inner product, with $Du\cdot Du$ comparable to the square of $\Lip u$.
In the Riemannian setting, we simply consider $\nz u$ with the Riemannian inner
product $\la \nz u,\nz \phi\ra$.
The local Sobolev space $H^{1,p}_\loc(X)$ is defined
as usual. For an open set $U\subset X$, the space $H^{1,p}_0(U)$
is defined to be the closure in $H^{1,p}(X)$ of Lipschitz functions with compact support in $U$.

Let $\Omega\subseteq X$ be a domain.
As in the Riemannian setting, a Sobolev function $u\in H^{1,2}(\Omega)$
is called a solution of $\Delta u=g$ in $\Omega$, if
\begin{equation}\label{1.2}
-\int_\Omega Du(x)\cdot D\phi(x)\,d\mu(x)=\int_\Omega g(x)\phi(x)\,d\mu(x),
\ \ \forall \phi\in H_0^{1,2}(\Omega).
\end{equation}
Biroli and Mosco \cite{bm} studied the Poisson equation by assuming that
$\mu$ is doubling and that a $2$-Poincar\'e inequality holds. In their paper,
the Green function, existence of solutions and H\"older
continuity of solutions are studied. We remark that the H\"older continuity
in \cite{bm} is obtained from Moser iteration and the exponent of H\"older continuity
is not of exact form. For potential theory on metric spaces, we refer to \cite{bb10}.

Our main aim is to establish a Moser-Trudinger type
inequality and Sobolev inequality for the gradients of solutions. Thus, modelling \eqref{1.3x},
we assume the following curvature condition.
Assume that there exists  a nonnegative function $c_{\kz}(T)$ on $(0,\fz)$ such that
for each $0<t<T$ and every $g\in H^{1,2}(X)$, we have
\begin{eqnarray}\label{1.3}
\int_X g(y)^2 p(t,x,y)\,d\mu(y)&&\le (2t+c_{\kz}(T)t^2)\int_X |Dg(y)|^2 p(t,x,y)\,d\mu(y)\nonumber\\
&&\hs+\lf(\int_X g(y)p(t,x,y)\,d\mu(y)\r)^2
\end{eqnarray}
for almost every $x\in X$, where $p(t,x,y)$ refers to the
heat kernel associated to the Dirichlet form $\int_X Df\cdot Dg\,d\mu$,
see Section 2 for details. In the Riemannian setting, $p$ is the usual heat kernel.
The function $c_{\kz}(T)$ should be viewed as
a consequence of some abstract lower curvature bound $-\kz$, and it is non-decreasing
as one can deduce from the assumption. Many examples in the classical smooth setting can be
found in \cite{bak1,be2,chy,gro1,wa04,wa97}.

Further examples include compact Alexandrov spaces with curvature bounded from below.
It is well known that the (local) Poincar\'e inequality \eqref{1.1} holds on
Alexandrov spaces with curvature bounded from below; see, for instance,
\cite{zz10}. Very recently, Gigli et al verified that \eqref{1.3} holds on
them, see \cite[Theorem 4.3]{gko11}.

Lott and Villani (\cite{lv09}) and Sturm (\cite{stm4,stm5}) independently
introduced and analyzed Ricci curvature in metric measure spaces via optimal mass transportation.
On a metric space with Ricci curvature (in the sense of Lott-Sturm-Villani) bounded from below
that additionally satisfies a local angle condition, a semi-concavity condition and
that the pointwise Lipschitz constant coincides with the length of the gradient,
\eqref{1.3} holds by results of Koskela and Zhou \cite[Corollary 6.2]{kz11}
(that employ the contraction property of the
gradient flow of entropy due to Savar\'e \cite{sa07}).

Koskela et al \cite{krs} established the Lipschitz regularity of Cheeger-harmonic (i.e.\,$\Delta u=0$)
functions under the above assumptions. They also showed for the space
$(X_\az,|\cdot|,dx)$, where $|\cdot|$ denotes the Euclidean metric, $dx$ the Lebesgue measure,
$\az\in (\pi,2\pi)$,
$$X_\az=\{(r\cos \phi,r\sin \phi)\in \rr^2:\,\phi\in [0,\az],r\ge 0\},$$
that \eqref{1.3} does not hold and that there exists a Cheeger-harmonic function
which is not locally Lipschitz continuous. On the other hand,
the space $(X_\az,|\cdot|,dx)$ with $\az\in (0,\pi]$ satisfies our assumptions.
Under the same assumptions,
for the Poisson equation $\Delta u=g$, the local Lipschitz continuity
of solutions $u$ is established when $g\in L^p$ with $p>Q$ in \cite{ji}.

We are in position to state our first gradient estimate.
\begin{thm}\label{t1.1}
Let $Q\in (1,\fz)$ and assume that \eqref{1.1} and \eqref{1.3} hold.
Then there exist $c,C>0$ such that
for all $u\in H^{1,2}(8B)$ and $g\in L^Q(8B)$ that satisfy
$\Delta u=g$ in $8B$, where $B=B_R(y_0)$ with $256R< R_0$,
\begin{equation*}
\fint_B \exp\lf(\frac{c|Du(x)|}{(1+\sqrt {c_{\kz}(R^2)}R)C(u,g)}\r)^{\frac {Q}{Q-1}}\,d\mu(x)\le C,
\end{equation*}
where $C(u,g)=R^{-Q/2-1}\|u\|_{L^2(8B)}+\|g\|_{L^Q(8B)}$.
\end{thm}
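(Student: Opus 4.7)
The plan is to use the Duhamel formula together with a semigroup gradient estimate extracted from \eqref{1.3} in order to dominate $|Du(x)|$ for $x\in B$ pointwise by a Riesz-$1$ potential of $|g|$ plus a lower-order term controlled by the local $L^2$-norm of $u$; the theorem then reduces to the sharp Moser--Trudinger inequality for the Riesz-$1$ potential on the $Q$-regular space $(X,d,\mu)$.

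The first ingredient is a pointwise gradient estimate for the heat semigroup, extracted from \eqref{1.3} by the standard Bakry--\'Emery interpolation $\phi(s)=P_s((P_{t-s}f)^2)$ whose derivative equals $2P_s(|DP_{t-s}f|^2)$: integrating in $s$ and inserting \eqref{1.3} into the resulting identity yields
\[
|DP_tf|^2(x)\ls \frac{1+c_\kz(T)t}{t}\bigl(P_t(f^2)(x)-(P_tf(x))^2\bigr),\quad 0<t<T.
\]
Combined with the Gaussian upper bound for the heat kernel $p(t,x,y)$, which in this setting follows from \eqref{1.1} and local $Q$-regularity via Sturm/Saloff-Coste theory and does not require Li--Yau gradient estimates for $p$ itself, this can be promoted to the working form
\[
|DP_tf|(x)\ls \frac{\sqrt{1+c_\kz(T)t}}{\sqrt t}\bigl(P_{ct}(f^2)(x)\bigr)^{1/2}.
\]

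The equation is then localized through a Lipschitz cutoff $\eta$ with $\eta\equiv 1$ on $2B$ and $\supp\eta\subset 4B$; setting $v:=\eta u$ one has $\Delta v=\eta g+2Du\cdot D\eta+u\Delta\eta$ on $X$, so one may apply the Duhamel identity
\[
v(x)=P_Tv(x)-\int_0^T P_s(\Delta v)(x)\,ds
\]
with $T\sim R^2$. Differentiating in $x\in B$, feeding in the semigroup gradient estimate and exchanging the order of integration, using that
\[
\int_0^{R^2}\frac{\sqrt{1+c_\kz(R^2)t}}{\sqrt t}\,p(ct,x,y)\,dt\ls (1+\sqrt{c_\kz(R^2)}\,R)\,d(x,y)^{1-Q}
\]
by a direct computation based on $Q$-regularity and the Gaussian tail, one arrives at the pointwise estimate
\[
|Du(x)|\ls (1+\sqrt{c_\kz(R^2)}\,R)\lf(I_1^{\loc}(|g|\chi_{8B})(x)+R^{-Q/2-1}\|u\|_{L^2(8B)}\r),
\]
where $I_1^{\loc}f(x):=\int_{8B}f(y)d(x,y)^{1-Q}\,d\mu(y)$, and the second term absorbs both $|DP_T(\eta u)|$ and the contributions of the cutoff errors.

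The conclusion is then a direct application of the classical sharp Moser--Trudinger inequality for the local Riesz-$1$ potential on a locally $Q$-regular space supporting a $1$-Poincar\'e inequality (itself a weak consequence of \eqref{1.1} via H\"older): there exist $c,C>0$ with
\[
\fint_B\exp\lf(\frac{c|I_1^{\loc}f(x)|}{\|f\|_{L^Q(8B)}}\r)^{Q/(Q-1)}\,d\mu(x)\le C.
\]
Inserting $f=|g|\chi_{8B}$, together with the elementary inequality $\exp((a+b)^{Q/(Q-1)})\le\frac12(\exp((2a)^{Q/(Q-1)})+\exp((2b)^{Q/(Q-1)}))$ to separate the Riesz potential piece from the bounded term $R^{-Q/2-1}\|u\|_{L^2(8B)}$, produces the claimed estimate. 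The main obstacle I anticipate is the localization step itself: the cutoff errors $2Du\cdot D\eta$ and $u\Delta\eta$ are supported in $4B\stm 2B$, and their contribution at $x\in B$ must be absorbed into the lower-order term via Gaussian off-diagonal bounds for $p$ together with a Caccioppoli inequality controlling $\|Du\|_{L^2(4B\stm 2B)}$ by $R^{-1}\|u\|_{L^2(8B)}+R\|g\|_{L^2(8B)}$; preserving the sharp exponent $Q/(Q-1)$ essentially pins down the choice $T\sim R^2$.
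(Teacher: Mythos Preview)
Your derivation of the reverse Poincar\'e inequality
\[
|DP_tf|^2(x)\ls \frac{1+c_\kz(R^2)t}{t}\bigl(P_t(f^2)(x)-(P_tf(x))^2\bigr)
\]
from \eqref{1.3} via the interpolation $\phi(s)=P_s((P_{t-s}f)^2)$ is correct (the diffusion identity $A(g^2)=2gAg+2|Dg|^2$ needed for $\phi'(s)=2P_s(|DP_{t-s}f|^2)$ is available here, cf.\ Lemma~2.1). The gap is the next step: you pass from this \emph{quadratic} bound to the claim that $\int_0^{R^2}|DP_s(\eta g)|(x)\,ds$ is dominated by $I_1^{\loc}(|g|)(x)$ by ``exchanging the order of integration'' against the kernel $p(ct,x,y)$. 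That exchange would require the \emph{linear} estimate
\[
|DP_sf|(x)\ls \frac{1}{\sqrt s}\,P_{cs}(|f|)(x)=\frac{1}{\sqrt s}\int_X p(cs,x,y)|f(y)|\,d\mu(y),
\]
equivalently a pointwise heat-kernel gradient bound $|D_xp(s,x,y)|\ls s^{-1/2}p(cs,x,y)$. This is exactly a Li--Yau type estimate, and the whole point of the paper's method---following Caffarelli--Kenig and Koskela--Rajala--Shanmugalingam---is to avoid assuming it. The reverse Poincar\'e only gives $|DP_sf|\ls s^{-1/2}(P_{cs}(f^2))^{1/2}$, and the square root blocks Fubini; indeed $\int_0^{R^2}s^{-1/2}(P_s(g^2)(x))^{1/2}\,ds$ can diverge for $g\in L^Q$ (take $g$ close to a point mass at $x$: with only $\|g\|_{L^Q}$ fixed one has $(P_s(g^2))^{1/2}\sim s^{-1/2}$ and the $s$-integral is logarithmically divergent), so this quantity does not enjoy the Moser--Trudinger bound with exponent $Q/(Q-1)$.

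The paper circumvents this by working with $|Du(x_0)|^2$ rather than $|Du(x_0)|$: the Caffarelli--Kenig functional $J(t)$ together with \eqref{1.3} yields (Theorem~3.1)
\[
|Du(x_0)|^2\le C(1+c_\kz(R^2)R^2)C(u,g)^2+\int_0^{R^2}\frac{1}{t}\int_X|w_{x_0}(t,x)\psi(x)g(x)|\,p(t,x_0,x)\,d\mu\,dt,
\]
and then, for the auxiliary problem with $\supp g\subset B/4$, estimates $|w_{x_0}(t,x)|$ via a telescoping bound involving $\||Du|\|_{\Psi_{R,1^\ast}(B)}$ (Lemma~4.2). This produces a Riesz potential \emph{with a logarithmic weight} on the right-hand side, multiplied by $\|g\|_{L^Q}+\||Du|\|_{\Psi_{R,1^\ast}(B)}$; the Moser--Trudinger bound for that potential (Theorem~4.2) then allows the $\||Du|\|_{\Psi_{R,1^\ast}(B)}$ term to be absorbed into the left side. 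So the mechanism is bootstrap/absorption rather than a direct pointwise Riesz-$1$ bound on $|Du|$. A secondary issue in your sketch is that $u\Delta\eta$ is only a distribution in this setting, so $P_s(u\Delta\eta)$ must be interpreted through an integration by parts bringing in $D_yp(s,x_0,y)$; this can be controlled off-diagonally (cf.\ Lemma~2.3), but it is one more place where your Duhamel route has to borrow estimates that the paper's $J(t)$-approach handles intrinsically.
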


The technical requirement $8B$ and $R<R_0/256$ can certainly be relaxed.
The point is that, in the abstract setting,
when dealing with an equation that $\Delta u=g$ in $\lz B$ for some $\lz>1$,
we need to consider an auxiliary equation in a ball bigger than $\lz B$; see our arguments in Section 4.

Let us consider the Poisson equation $\Delta u=g$ with
$g\in L^p_\loc(X)$ and $p<Q$. Since $u$ belongs to $H^{1,2}_\loc(X)$ by definition,
it is then natural to restrict
$p\in (2_\ast,Q)\cap (1,Q)$, where $2_\ast=\frac{2Q}{Q+2}$. Notice that $2_\ast<1$ only
for $Q<2$. We have the following result.
\begin{thm}\label{t1.2}
Let $Q\in(1,\fz)$, $p\in (2_\ast,Q)\cap (1,Q)$ and assume that \eqref{1.1} and \eqref{1.3} hold.
Then there exists a constant $C$ such that for all $u\in H^{1,2}(8B)$
and $g\in L^p(8B)$ that satisfy $\Delta u=g$ in $8B$,
$$\lf(\fint_B |Du|^{p^\ast}\,d\mu\r)^{1/p^\ast}\le
C(1+\sqrt{c_{\kz}(R^2)}R)\lf\{R^{-1}\lf(\fint_{8B}|u|^2\,d\mu\r)^{1/2}+R\lf(\fint_{8B}|g|^p\,d\mu\r)^{1/p}\r\},$$
 where $B=B_R(y_0)$ with $R<R_0/256$ and $p^\ast=\frac{Qp}{Q-p}$.
\end{thm}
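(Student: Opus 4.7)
My plan is to mimic the semigroup strategy underlying Theorem \ref{t1.1}, replacing the critical endpoint analysis (which there produced exponential integrability from an $L^Q$ datum) by the subcritical $L^p\to L^{p^\ast}$ mapping property of the order-one Riesz potential on locally $Q$-regular doubling spaces supporting a Poincar\'e inequality.

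First, I split $u=v+w$ on $4B$, where $w\in H^{1,2}_0(4B)$ solves $\Delta w=g$ in $4B$ (existence being provided by the Biroli--Mosco Dirichlet form theory \cite{bm}) and $v:=u-w$ is Cheeger-harmonic there. Testing the equation by $w$ itself, and combining H\"older on the pairing $(p,p')$ with the $(2,2_\ast)$-Sobolev inequality -- this is exactly where the assumption $p>2_\ast$ enters, so that $p'\le 2^\ast$ -- yields
\begin{equation*}
\lf(\fint_{4B}|w|^2\,d\mu\r)^{1/2}\le CR^2\lf(\fint_{8B}|g|^p\,d\mu\r)^{1/p}.
\end{equation*}
The Lipschitz regularity of Cheeger-harmonic functions from \cite{krs}, applied to $v$, then gives
\begin{equation*}
\|Dv\|_{L^\fz(B)}\le C(1+\sqrt{c_{\kz}(R^2)}R)R^{-1}\lf(\fint_{4B}|v|^2\,d\mu\r)^{1/2},
\end{equation*}
and the triangle inequality absorbs $\|v\|_{L^2(4B)}\le \|u\|_{L^2(8B)}+\|w\|_{L^2(4B)}$ into the right-hand side of the claim.

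To handle $w$, I use the heat semigroup associated to the Dirichlet form. For a time horizon $T\sim R^2$,
\begin{equation*}
w=P_Tw-\int_0^T P_tg\,dt,
\end{equation*}
and from \eqref{1.3}, by the standard Bakry--Emery duality that converts it into a pointwise local gradient-heat inequality, I would derive
\begin{equation*}
|DP_tg(x)|\le \frac{C(1+\sqrt{c_{\kz}(T)t})}{\sqrt t}\int_X|g(y)|p(t,x,y)\,d\mu(y).
\end{equation*}
Integrating over $t\in(0,T)$ and using the Gaussian upper bound on $p$ available in this setting, I obtain the pointwise domination
\begin{equation*}
|Dw(x)|\le C(1+\sqrt{c_{\kz}(R^2)}R)I_1(|g|\chi_{8B})(x)+|DP_Tw(x)|,
\end{equation*}
with $I_1$ the order-one Riesz potential. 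The tail $|DP_Tw|$ is handled as in the proof of Theorem \ref{t1.1} and contributes to the $u$-term. The Hajlasz--Koskela boundedness $I_1\colon L^p\to L^{p^\ast}$ on locally $Q$-regular doubling spaces supporting a Poincar\'e inequality concludes the argument.

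The main technical obstacle will be the time-integration step: one needs $\int_0^T t^{-1/2}(1+\sqrt{c_{\kz}(t)t})\,dt$ to produce exactly the factor $(1+\sqrt{c_{\kz}(R^2)}R)$ appearing in the statement, which requires the monotonicity of $c_{\kz}$ together with a split of the integral at the curvature-adapted scale $\min\{R^2,c_{\kz}(R^2)^{-1}\}$. A secondary difficulty is the localisation: the semigroup representation is intrinsically global, forcing one to cut $g$ off outside $4B$ and absorb the resulting error via Caccioppoli-type estimates, which is the mechanism behind the shrinkage from $8B$ to $B$ and the constraint $256R<R_0$.
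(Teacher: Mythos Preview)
Your harmonic/particular-solution split and the use of $I_1:L^p\to L^{p^\ast}$ are indeed the skeleton of the paper's argument, but the step you call ``standard Bakry--Emery duality'' is a genuine gap. The assumption \eqref{1.3} is the \emph{forward} heat-kernel Poincar\'e inequality $P_t(f^2)-(P_tf)^2\le (2t+c_\kz t^2)P_t(|Df|^2)$. What you need, $|DP_tg(x)|\le Ct^{-1/2}P_t(|g|)(x)$, is equivalent (by testing on approximate deltas) to a pointwise gradient bound $|D_xp(t,x,y)|\le Ct^{-1/2}p(ct,x,y)$ on the heat kernel---precisely the Li--Yau type estimate the paper is written to \emph{avoid}, because it is not known to follow from \eqref{1.3} in the abstract metric setting. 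At best one might try to extract the \emph{reverse} Poincar\'e $|DP_tg|^2\le Ct^{-1}P_t(g^2)$ from \eqref{1.3} (already nontrivial here without a $\Gamma_2$-calculus), but this is strictly weaker: it yields $\int_0^T t^{-1/2}(P_t(g^2))^{1/2}\,dt$, and since $\|P_t(g^2)\|_{L^{p^\ast/2}}^{1/2}\ls t^{-1/2}\|g\|_{L^p}$ by the $L^{p/2}\to L^{p^\ast/2}$ smoothing, the time integral $\int_0^T t^{-1}\,dt$ diverges. A second, related problem is your Duhamel identity $w=P_Tw-\int_0^TP_tg\,dt$: the global semigroup sees the global $\Delta w$, which after zero-extension of $w\in H^{1,2}_0(4B)$ carries a singular layer on $\partial(4B)$, not just $g$; this is not a cosmetic cutoff issue but the reason the paper works throughout with $u\psi$ and tracks the $D\psi$-terms explicitly.

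The paper's route to the particular solution is different in kind. Using the Caffarelli--Kenig functional $J(t)$ (Section~3), condition~\eqref{1.3} is invoked exactly once to control $J'$, producing the pointwise bound $|Du(x_0)|^2\le C(u,g)^2+\int_0^{R^2}t^{-1}\int|w_{x_0}\psi g|\,p\,d\mu\,dt$ with $w_{x_0}(t,x)=u\psi(x)-T_t(u\psi)(x_0)$ (Theorem~\ref{t3.1}). For compactly supported bounded $g$ one then feeds back the a~priori H\"older/$L^{p^\ast}$-continuity of $u$ itself (Lemma~4.2) to dominate this by a Riesz-type potential of $|g|$ with a prefactor $\|g\|_{L^p}+\||Du|\|_{L^{p^\ast}}$, and absorbs the latter after taking $L^{p^\ast/2}$ norms (Theorem~4.1(ii)). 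The range $p\in(2_\ast,Q/2]$ for $Q>2$ is then covered by real interpolation via the Green function (Proposition~5.1), and the general $g\in L^p$ by approximation plus the Cheeger-harmonic Lipschitz bound---the last part matching your Step~1.
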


How to prove the above results? As mentioned above, we use a semigroup
approach. This method was introduced in \cite{ck} in the Euclidean setting
 to study variable coefficient parabolic equations,
and was applied in \cite{krs} to Lipschitz continuity of Cheeger-harmonic functions;
see Section 3 below. By using this method, for the auxiliary equation $\Delta v=g\chi_{8B}$ in $256B$,
we obtain a pointwise estimate for the gradient of $v$ by generalized Riesz potentials
based on the heat semigroup.
By using the mapping properties of the generalized Riesz potentials, we then establish the
above two theorems for the solutions of the auxiliary equations. Then, for general solutions
of the Poisson equation, Theorem \ref{t1.1} and Theorem \ref{t1.2} follow by using
density arguments and the theory of Cheeger-harmonic functions.

As a corollary to Theorem \ref{t1.2}, we have the following H\"older-continuity
estimate.

\begin{cor}\label{c1.1}
Let $Q\in(1,\fz)$, $p\in (\frac Q2,Q)\cap (1,Q)$ and assume that \eqref{1.1} and \eqref{1.3} hold.
Suppose that $u\in H^{1,2}_{\loc}(\Omega)$ satisfies $\Delta u=g$ with $g\in L^p_\loc(\Omega)$,
where $\Omega\subseteq X$ is a domain.
Then $u$ is locally H\"older continuous with exponent $2-\frac Qp$ in $\Omega$.
\end{cor}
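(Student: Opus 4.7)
The plan is to combine Theorem \ref{t1.2} with a Morrey-type telescoping argument that is standard in doubling spaces supporting a Poincar\'e inequality. Fix $x_0\in \Omega$ and a ball $B=B_R(x_0)$ with $256R<R_0$ and $\overline{8B}\subset \Omega$. Applying Theorem \ref{t1.2} to $u$ on $B$ one obtains $Du\in L^{p^*}(B)$, where $p^*=Qp/(Q-p)$. The hypothesis $p>Q/2$ is precisely the condition $p^*>Q$; moreover $p^*>2$, so H\"older's inequality upgrades the $(1,2)$-Poincar\'e inequality \eqref{1.1} to a $(1,p^*)$-Poincar\'e inequality on every ball contained in $\Omega$. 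This is the only place where the quantitative estimate of Theorem \ref{t1.2} enters; the rest of the argument is soft.

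Next I would run the classical chain argument. For two Lebesgue points $x,y$ of $u$ in $\tfrac12 B$, set $r=d(x,y)$ and $B_k=B(x,r/2^k)$, $B_k'=B(y,r/2^k)$. The telescope
\begin{equation*}
|u(x)-u(y)|\le \sum_{k\ge 0}|u_{B_{k+1}}-u_{B_k}|+|u_{B_0}-u_{B_0'}|+\sum_{k\ge 0}|u_{B_{k+1}'}-u_{B_k'}|,
\end{equation*}
together with doubling and the $(1,p^*)$-Poincar\'e inequality, yields for each $k$
\begin{equation*}
|u_{B_{k+1}}-u_{B_k}|\le C\fint_{B_k}|u-u_{B_k}|\,d\mu\le C\lf(\frac{r}{2^k}\r)\lf(\fint_{B_k}|Du|^{p^*}\,d\mu\r)^{1/p^*}.
\end{equation*}
Using Ahlfors $Q$-regularity to pass from the normalized average on $B_k$ to the fixed $L^{p^*}$-norm on $B_{2r}(x)$, this becomes
\begin{equation*}
|u_{B_{k+1}}-u_{B_k}|\le C\lf(\frac{r}{2^k}\r)^{1-Q/p^*}\|Du\|_{L^{p^*}(B_{2r}(x))}.
\end{equation*}
Since $p^*>Q$ the exponent $1-Q/p^*$ is positive, so the geometric series converges. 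Treating $|u_{B_0}-u_{B_0'}|$ analogously using any common ball containing $B_0\cup B_0'$, one arrives at
\begin{equation*}
|u(x)-u(y)|\le C\, r^{1-Q/p^*}\,\|Du\|_{L^{p^*}(B_{2r}(x))}.
\end{equation*}

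A direct calculation gives $1-Q/p^*=1-(Q-p)/p=(2p-Q)/p=2-Q/p$, which is the asserted H\"older exponent; local finiteness of $\|Du\|_{L^{p^*}}$ is supplied by Theorem \ref{t1.2}. I do not anticipate any serious obstacle: once that theorem pushes the integrability of $Du$ strictly above the critical exponent $Q$, the Morrey-type embedding in doubling Poincar\'e spaces does the rest. The only (minor) subtlety is to work with the precise Lebesgue representative of $u$ so that the pointwise evaluation on the left-hand side of the telescope is justified; in our setting every point of $\Omega$ is automatically a Lebesgue point of this representative, so the above chain argument produces a H\"older continuous representative of $u$ on every relatively compact subdomain of $\Omega$.
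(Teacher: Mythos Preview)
Your proposal is correct and follows essentially the same route as the paper: apply Theorem \ref{t1.2} to obtain $|Du|\in L^{p^\ast}_{\loc}$ with $p^\ast>Q$, then run the telescoping/Poincar\'e argument (the paper invokes Lemma \ref{l4.2}(ii) for this step) to deduce H\"older continuity with exponent $1-Q/p^\ast=2-Q/p$. The only cosmetic difference is that the paper phrases the conclusion as extending $u$ to a H\"older continuous representative rather than speaking of Lebesgue points.
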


The paper is organized as follows. In Section 2, we give some basic notation
and notions for Cheeger derivatives, Dirichlet forms and
Orlicz spaces. Several auxiliary results regarding Poisson equations are also given
in Section 2. Section 3 is devoted to introducing the method and some estimates.
We study auxiliary equations in Section 4 and prove Theorem \ref{t1.1} and Theorem
\ref{t1.2} for the solutions of the auxiliary equations.
The main results are proved in Section 5.

Finally, we make some conventions. Throughout the paper, we denote
by $C,c$ positive constants which are independent of the main
parameters, but which may vary from line to line. The symbol
$B_R(x)=B(x,R)$ denotes an open ball
with center $x$ and radius $R$ and $B_{CR}(x)=CB_R(x)=B(x,CR).$
For $p\in (1,Q)$, denote $\frac{Qp}{Q-p}$ by $p^\ast$, and
for $p\in(1,\fz)$, denote $\frac{Qp}{Q+p}$ by $p_\ast$.

\section{Preliminaries}
\hskip\parindent In this section, we give some basic notation
and notions and several auxiliary results.

\subsection{Cheeger Derivative in metric measure spaces}
\hskip\parindent Let $(X,d,\mu)$ be a metric measure space
with $\mu$ Ahlfors $Q$-regular for some $Q>1$.
Cheeger \cite{ch} generalized Rademacher's theorem of
differentiability of Lipschitz functions on $\rn$ to metric measure spaces.
Precisely, the following theorem provides us the differential structure.

\begin{thm}\label{t2.1}
Assume that $(X,\mu)$ supports a weak $p$-Poincar\'e inequality for some
$p>1$ and that $\mu$ is doubling. Then there exists $N > 0$, depending only on the doubling
constant and the constants in the Poincar\'e inequality, such that the following holds.
There exists a countable collection of measurable sets $U_\az$, $\mu(U_\az)>0$
for all $\az$, and Lipschitz functions $X^\az_1,\cdots, X^\az_{k(\az)}:\, U_\az\to \rr$,
with $1\le k(\az)\le N$ such that
$\mu\lf(X \setminus\cup_{\az=1}^\fz U_\az\r)=0$,
and for all $\az$ the following holds:
for $f: X\to \rr$ Lipschitz, there exist $V_\az(f)\subseteq U_\az$ such that
$\mu(U_\az\setminus V_\az(f))=0$, and Borel functions $b_1^\az(x,f),\cdots,b^\az_{k(\az)}(x,f)$
of class $L^\fz$ such that if $x\in V_\az(f)$, then
$$\Lip(f-a_1 X_1^\az-\cdots-a_{k(\az)}X^\az_{k(\az)})(x)=0$$
if and only if $(a_1,\cdots,a_{k(\az)})=(b^\az_1(x,f),\cdots,b^\az_{k(\az)}(x,f))$.
Moreover, for almost every $x\in U_{\az_1}\cap U_{\az_2}$, the ``coordinate functions"
$X_i^{\az_2}$ are linear combinations of the $X_i^{\az_1}$'s.
\end{thm}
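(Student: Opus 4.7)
The plan is to follow the blow-up and maximality strategy of Cheeger; indeed, Theorem \ref{t2.1} is his original differentiation theorem, and a proof from scratch is substantial, so I only sketch the skeleton. The underlying idea is that although no a priori differential structure is given on $(X,d,\mu)$, the pointwise Lipschitz constant $\Lip f$ of a Lipschitz function $f$ supplies a tangent invariant that behaves, at $\mu$-almost every point, like the norm of a classical gradient. Here the doubling of $\mu$ combined with the weak $p$-Poincar\'e inequality forces $\Lip f$ to be a minimal upper gradient of $f$ and provides good mean-oscillation control on small balls.

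First I would construct the charts. Fix a countable collection $\{f_j\}$ of Lipschitz functions whose restrictions to balls from a countable basis generate dense subfamilies. For $\mu$-a.e.\ $x$, call a finite subcollection $\{f_{j_1},\ldots,f_{j_k}\}$ \emph{independent at $x$} if no nontrivial linear combination has vanishing $\Lip$ at $x$, and select a maximal such subcollection at each $x$. A Borel selection argument then yields an at-most-countable family of measurable sets $U_\az$ with $\mu(X\setminus\bigcup_\az U_\az)=0$ and fixed tuples $X_1^\az,\ldots,X_{k(\az)}^\az$ that realize a maximal independent set at each point of $U_\az$. Maximality gives, for every Lipschitz $f$ and a.e.\ $x\in U_\az$, unique scalars $b_i^\az(x,f)$ such that $\Lip(f-\sum_i b_i^\az(x,f) X_i^\az)(x)=0$; uniqueness, and hence Borel measurability of the $b_i^\az$, follows by viewing the map $(a_1,\ldots,a_{k(\az)})\mapsto \Lip(f-\sum_i a_i X_i^\az)(x)$ as a seminorm whose kernel is trivial by the chosen independence.

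The main obstacle, which is the heart of Cheeger's paper, is the uniform bound $k(\az)\le N$ with $N$ depending only on the doubling and Poincar\'e constants. The strategy is to pass to a density point $x$ of $U_\az$, rescale the space and the $X_i^\az$, and use the Poincar\'e inequality to convert pointwise independence of the $X_i^\az$ into quantitative transversality of their mean oscillations on small balls $B(x,r)$. A packing estimate in a (generalized) tangent of $X$ at $x$, whose metric dimension is controlled through the doubling constant, then limits how many transverse directions can coexist. Promoting qualitative pointwise independence to an intrinsic quantitative lower bound, independent of the specific $X_i^\az$, is where the bulk of the technical work lives. The compatibility of the charts on an overlap $U_{\az_1}\cap U_{\az_2}$ then follows by simply applying the representation formula to each $X_i^{\az_2}$ within the chart $U_{\az_1}$.
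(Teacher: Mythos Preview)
Your sketch is a reasonable outline of Cheeger's original argument, but there is nothing to compare it to: the paper does not prove Theorem~\ref{t2.1}. It is stated as background and attributed to Cheeger \cite{ch}, with no proof given. So the ``paper's own proof'' here is simply a citation.
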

By Theorem \ref{t2.1}, for each Lipschitz function $u$ we can assign a derivative $Du$,
which we call Cheeger derivative following \cite{krs}.
For each locally Lipschitz function $f$, we define $\lip f$ by
\begin{equation*}
\lip f(x)=\liminf_{r\to 0}\sup_{d(x,y)\le r}\frac{|f(x)-f(y)|}{r}.
\end{equation*}
By \cite{ch}, under the assumptions of Theorem \ref{t2.1},
for each locally Lipschitz $f$, $\Lip f$ and $\lip f$ coincide
with the minimal upper gradient $g_u$ of $u$ almost everywhere, and they
all are comparable to $|Du|$. See also \cite{ke04}.

By \cite{sh} and \cite{ch},
the Sobolev spaces $H^{1,p}(X)$ are isometrically equivalent to the Newtonian Sobolev spaces
$N^{1,p}(X)$ defined in \cite{sh} for $p\ge 2$. Franchi et al \cite{fhk} further showed
that the differential operator $D$ can be extended to all functions in the corresponding
Sobolev spaces. A useful fact is that the Cheeger derivative satisfies the Leibniz rule, i.e.,
for all $u,v\in H^{1,2}(X)$,
$$D(uv)(x)=u(x)Dv(x)+v(x)Du(x).$$

\subsection{Dirichlet forms and heat kernels}
\hskip\parindent Having defined the Sobolev spaces $H^{1,p}(X)$ and the differential operator $D$,
we now consider Dirichlet forms on $(X,\mu)$.
Define the bilinear form $\mathscr{E}$ by
$$\mathscr{E}(f,g)=\int_X Df(x)\cdot Dg(x)\,d\mu(x)$$
with the domain $D(\mathscr{E})=H^{1,2}(X)$.
It is easy to see that $\mathscr{E}$ is symmetric and closed.
Corresponding to such a form there exists
an infinitesimal generator $A$ which acts on a dense
subspace $D(A)$ of $H^{1,2}(X)$ so that for all $f\in D(A)$ and each
$g\in H^{1,2}(X)$,
$$\int_X g(x)Af(x)\,d\mu(x)=-\mathscr{E}(g,f).$$

Now let us recall several auxiliary results established in \cite{krs}.
\begin{lem}
If $u,\,v\in H^{1,2}(X)$, and $\phi\in H^{1,2}(X)$ is a bounded
Lipschitz function, then
\begin{equation*}
\mathscr{E}(\phi,uv)=\mathscr{E}(\phi u,v)+\mathscr{E}(\phi
v,u)-2\int_X \phi Du(x)\cdot Dv(x)\,d\mu(x).
\end{equation*}
Moreover, if $u,\,v\in D(A)$, then we can unambiguously
define the measure $A(uv)$ by setting
\begin{equation*}
A(uv)=uAv+vAu+2Du\cdot Dv.
\end{equation*}
\end{lem}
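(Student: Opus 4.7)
The plan is to derive both identities as direct consequences of the Leibniz rule for the Cheeger derivative established in Section 2.1, together with the definition of $\mathscr{E}$ and the generator $A$.

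First I would verify that all the quantities are well-defined. Since $\phi$ is a bounded Lipschitz function, both $\phi u$ and $\phi v$ belong to $H^{1,2}(X)$ with $D(\phi u)=\phi Du+uD\phi$ and $D(\phi v)=\phi Dv+vD\phi$ (Leibniz rule); the products $uDv$ and $vDu$ lie in $L^1(X)$ by Cauchy--Schwarz since $Du,Dv\in L^2$ and $u,v\in L^2$, so in particular $\int_X D\phi\cdot(uDv+vDu)\,d\mu$ is absolutely convergent. This justifies interpreting $\mathscr{E}(\phi,uv)$ via the formula $\int_X D\phi\cdot D(uv)\,d\mu$, where $D(uv)=uDv+vDu$ comes from the Leibniz rule, even when $uv$ need not belong to $H^{1,2}(X)$.

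Next I would expand both sides of the first identity. Writing out
\begin{equation*}
\mathscr{E}(\phi u,v)=\int_X(\phi Du+uD\phi)\cdot Dv\,d\mu,\qquad
\mathscr{E}(\phi v,u)=\int_X(\phi Dv+vD\phi)\cdot Du\,d\mu,
\end{equation*}
and adding yields $\int_X D\phi\cdot(uDv+vDu)\,d\mu+2\int_X\phi\,Du\cdot Dv\,d\mu$, and the first of these two integrals equals exactly $\mathscr{E}(\phi,uv)$ by the computation above. Rearranging gives the claimed identity. This is just a bookkeeping computation and should not present any real obstacle; the only subtle point is keeping track of which quantities require membership in $H^{1,2}(X)$ and which only need integrability.

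For the second assertion, assume additionally that $u,v\in D(A)$. Then $\phi u,\phi v\in H^{1,2}(X)$, so the defining property of $A$ applies to give
\begin{equation*}
\mathscr{E}(\phi u,v)=-\int_X\phi u\,Av\,d\mu,\qquad
\mathscr{E}(\phi v,u)=-\int_X\phi v\,Au\,d\mu.
\end{equation*}
Substituting into the identity just proved, we obtain
\begin{equation*}
\mathscr{E}(\phi,uv)=-\int_X\phi\bigl(uAv+vAu+2\,Du\cdot Dv\bigr)\,d\mu
\end{equation*}
for every bounded Lipschitz $\phi$. Since bounded Lipschitz functions are dense in the natural test space, this prescribes the distributional action of $A(uv)$ unambiguously, justifying the formula $A(uv)=uAv+vAu+2\,Du\cdot Dv$ as the definition of the signed measure (with density in $L^1_{\loc}$). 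The main conceptual point to check is that the right-hand side does define an absolutely continuous measure with $L^1_{\loc}$ density: $uAv, vAu\in L^1_{\loc}$ by Cauchy--Schwarz in $L^2_{\loc}$, and $Du\cdot Dv\in L^1(X)$ by the same inequality applied to $Du,Dv\in L^2(X)$. No further obstacle remains.
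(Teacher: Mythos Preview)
Your argument is correct and is exactly the natural proof: expand both $\mathscr{E}(\phi u,v)$ and $\mathscr{E}(\phi v,u)$ via the Leibniz rule, add, and identify the $D\phi$-terms with $\mathscr{E}(\phi,uv)$; then for $u,v\in D(A)$ rewrite $\mathscr{E}(\phi u,v)$ and $\mathscr{E}(\phi v,u)$ using the generator. Note that the paper does not actually supply a proof of this lemma---it is quoted from \cite{krs} (see the sentence ``Now let us recall several auxiliary results established in \cite{krs}'' preceding the statement)---so there is no paper proof to compare against, but your computation is precisely the standard one underlying that reference.
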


Also, associated with the Dirichlet form $\mathscr{E}$, there is a
semigroup $\{T_t\}_{t>0}$, acting on $L^2(X)$, with the following
properties (see \cite[Chapter 1]{fot}):

1. $T_t\circ T_s=T_{t+s},\,\forall \,t,\,s>0$,

2. $\int_X|T_t f(x)|^2\,d\mu(x)\le \int_X f(x)^2\,d\mu(x),\,\forall
\,f\in L^2(X,\mu) \ \mbox{and} \ \forall \, t>0$,

3. $T_t f\to f$ in $L^2(X,\mu)$ when $t\to 0$,

4. if $f\in L^2(X,\mu)$ satisfies $0\le f\le C$, then $0\le T_tf\le
C$ for all $t>0$,

5. if $f\in D(A)$, then $\frac 1t (T_tf-f)\to Af$ in
$L^2(X,\mu)$ as $t\to 0$, and

6. $A T_tf=\frac {\pa}{\pa_t}T_tf$, $\forall t>0$ and $\forall \,
f\in L^2(X,\mu)$.

A measurable function $p:\, \rr\times X\times X \to [0,\fz]$ is said
to be a heat kernel on $X$ if
$$ T_tf(x)=\int_X f(y)p(t,x,y)\,d\mu(y)$$ for
every $f\in L^2(X,\mu)$ and all $t\ge 0$, and $p(t,x,y)=0$ for every
$t<0$. Let the measure on $X$ be doubling (i.e. $\mu(2B)\le C_d\mu(B)$ for
each ball $B$) and assume that the
2-Poincar\'e inequality \eqref{1.1} holds. Sturm (\cite{st3}) proved the existence of a heat
kernel and a Gaussian estimate for the heat kernel, which in our settings reads
as: there exist positive constants $C,\,C_1,\,C_2$ such that
\begin{equation}\label{x2.1}
  C^{-1}t^{-\frac Q2}e^{-\frac{d(x,y)^2}{C_2t}}\le p(t,x,y)\le
  Ct^{-\frac Q2}e^{-\frac{d(x,y)^2}{C_1t}}.
\end{equation}
Moreover, the heat kernel is
proved in \cite{st1} to be a probability measure, i.e., for each $x\in X$ and
$t>0$,
\begin{equation}\label{x2.2}
T_t1(x)=\int_X p(t,x,y)\,d\mu(y)=1.
\end{equation}

The following lemma was established in \cite{krs}.

\begin{lem}\label{l2.2}
Let $T>0$. Then for $\mu$-almost every $x\in X$,
$D_yp(\cdot,x,\cdot)\in L^2([0,T]\times X)$ and there exists a
positive constant $C_{T,x}$, depending on $T$ and $x$, such that
$$\int_0^T \int_{X}|D_yp(t,x,y)|^2
\,d\mu(y)\,dt\le C_{T,x}.$$
\end{lem}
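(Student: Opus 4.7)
The plan is to identify $\int_X|D_yp(t,x,y)|^2\,d\mu(y)$ with the Dirichlet energy $\mathscr{E}(p(t,x,\cdot),p(t,x,\cdot))$ and exploit the semigroup energy identity. First I would verify that for every $t>0$, $p(t,x,\cdot)\in D(\mathscr{E})=H^{1,2}(X)$, so that the Cheeger derivative $D_yp(t,x,\cdot)$ is meaningful. Writing $p(t,x,\cdot)=T_{t/2}p(t/2,x,\cdot)$, the Gaussian upper bound \eqref{x2.1} together with local $Q$-regularity give $p(t/2,x,\cdot)\in L^2(X)$ with $\|p(t/2,x,\cdot)\|_{L^2(X)}^2\le Ct^{-Q/2}$, and the standard regularizing property of the symmetric semigroup then places $T_{t/2}p(t/2,x,\cdot)$ in $D(A)\subset D(\mathscr{E})$.

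The core step is the energy identity
\begin{equation*}
\frac{d}{dt}\|p(t,x,\cdot)\|_{L^2(X)}^2 \;=\; 2\int_X p(t,x,y)\,A_yp(t,x,y)\,d\mu(y) \;=\; -2\int_X|D_yp(t,x,y)|^2\,d\mu(y),
\end{equation*}
which follows from property 6 of $\{T_t\}$ and the defining relation $\int_X g\,Af\,d\mu=-\mathscr{E}(g,f)$ applied with $g=f=p(t,x,\cdot)$. Integrating from $\epsilon>0$ up to $T$ and using the symmetry-plus-semigroup identity $\|p(\epsilon,x,\cdot)\|_{L^2(X)}^2=p(2\epsilon,x,x)$ yields
\begin{equation*}
2\int_\epsilon^T\!\int_X|D_yp(t,x,y)|^2\,d\mu(y)\,dt \;=\; p(2\epsilon,x,x)-p(2T,x,x),
\end{equation*}
which already delivers the $L^2$ regularity of $D_yp$ on $[\epsilon,T]\times X$ with an explicit constant coming from \eqref{x2.1}.

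The main obstacle will be the passage $\epsilon\to 0$: pointwise in $x$ the right-hand side is controlled by \eqref{x2.1} only in an $\epsilon$-dependent way, so to recover a finite bound on the full time interval $[0,T]$ I would integrate the identity against a suitable locally integrable weight in $x$ and invoke Fubini. This identifies a $\mu$-a.e.\ set of $x\in X$ on which the limit $\epsilon\to 0$ of $p(2\epsilon,x,x)$ remains finite; monotone convergence then produces the constant $C_{T,x}<\infty$ and justifies the ``$\mu$-almost every $x$'' qualifier in the statement.
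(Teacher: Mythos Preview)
The paper does not give its own proof of this lemma; it is quoted from \cite{krs} without argument, so there is no in-paper proof to compare against. What follows addresses the internal soundness of your strategy.

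Your first two steps are correct: for each $t>0$ one has $p(t,x,\cdot)=T_{t/2}p(t/2,x,\cdot)\in D(A)\subset H^{1,2}(X)$, and the energy identity
\[
2\int_\epsilon^T\!\int_X|D_yp(t,x,y)|^2\,d\mu(y)\,dt \;=\; p(2\epsilon,x,x)-p(2T,x,x)
\]
holds exactly as you indicate. The genuine gap is the final step. You hope that after integrating against a weight in $x$ and invoking Fubini one can isolate a full-measure set of $x$ on which $\lim_{\epsilon\to0}p(2\epsilon,x,x)$ is finite. But the Gaussian \emph{lower} bound in \eqref{x2.1} gives $p(t,x,x)\ge C^{-1}t^{-Q/2}$ for \emph{every} $x\in X$ and all small $t$, so $p(2\epsilon,x,x)\to+\infty$ at every point of $X$; no weighting or Fubini argument can manufacture an almost-everywhere finite limit from a quantity that diverges pointwise everywhere.

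In fact, your own identity combined with this lower bound forces $\int_0^T\int_X|D_yp|^2\,d\mu\,dt=+\infty$ for every $x$, and the explicit Euclidean computation $\int_{\rn}|\nabla_y p(t,x,y)|^2\,dy=c_n\,t^{-n/2-1}$ confirms this. Thus the energy-identity route cannot deliver the stated bound; either the formulation transcribed here carries an implicit restriction not visible in the displayed inequality, or the argument in \cite{krs} proceeds along genuinely different lines, and you should consult that source directly rather than attempt to reconstruct it from the semigroup energy decay.
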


By a slight modification to the proof of \cite[Lemma 3.3]{krs},
we deduce the following estimate.
\begin{lem}\label{l2.3}
There exist $c, C>0$ such that for every $x\in X$,
$$\int_0^s \int_{2B_R(x)\setminus B_R(x)}|D_yp(t,x,y)|^2
\,d\mu(y)\,dt\le CR^{-Q/2}e^{-{cR^2}/{s}},$$
whenever $R>0$ and $s\in (0,R^2]$.
\end{lem}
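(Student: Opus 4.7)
The plan is to adapt the Caccioppoli-type argument of \cite[Lemma 3.3]{krs}, the key modification being to choose the cutoff function so that it vanishes in a neighborhood of $x$; this allows the Gaussian upper bound \eqref{x2.1} on $p$ to generate the exponential factor $e^{-cR^2/s}$ through the gradient of the cutoff.

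Fix $x\in X$ and take a Lipschitz cutoff $\phi$ with $\phi\equiv 1$ on $2B_R(x)\setminus B_R(x)$, with $\phi$ supported in $3B_R(x)\setminus B_{R/2}(x)$, and $|D\phi|\le C/R$. Writing $p_t(\cdot)=p(t,x,\cdot)$ and combining $\partial_t p_t = Ap_t$ with the Leibniz rule for $\mathscr{E}$, I would derive the energy identity
$$\frac{1}{2}\frac{d}{dt}\int_X \phi^2 p_t^2\,d\mu = -\int_X \phi^2|Dp_t|^2\,d\mu - 2\int_X \phi p_t\,D\phi\cdot Dp_t\,d\mu.$$
Absorbing the cross term by Cauchy--Schwarz and integrating in $t$ from $\epsilon$ to $s$ gives the Caccioppoli inequality
$$\int_X \phi^2 p_s^2\,d\mu - \int_X \phi^2 p_\epsilon^2\,d\mu + \int_\epsilon^s\!\!\int_X \phi^2|Dp_t|^2\,d\mu\,dt \le 4\int_\epsilon^s\!\!\int_X p_t^2|D\phi|^2\,d\mu\,dt.$$

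To send $\epsilon\to 0^+$, I would observe that for $y$ in the support of $\phi$ we have $d(x,y)\ge R/2$, so the Gaussian bound \eqref{x2.1} yields $p_\epsilon(y)^2\le C\epsilon^{-Q}e^{-cR^2/\epsilon}$, and combined with $\mu(3B_R(x))\le CR^Q$ the initial boundary term tends to zero. Dropping the non-negative $t=s$ boundary contribution and using $\phi\equiv 1$ on the target annulus, I obtain
$$\int_0^s\!\!\int_{2B_R(x)\setminus B_R(x)}|Dp_t|^2\,d\mu\,dt \le 4\int_0^s\!\!\int_X p_t^2|D\phi|^2\,d\mu\,dt.$$
On the support of $D\phi$, where $d(x,y)\ge R/2$ and $|D\phi|^2\le C/R^2$, another application of \eqref{x2.1} gives $\int_X p_t^2|D\phi|^2\,d\mu \le CR^{Q-2}t^{-Q}e^{-cR^2/t}$. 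The substitution $u=R^2/t$ converts $\int_0^s R^{Q-2}t^{-Q}e^{-cR^2/t}\,dt$ into a constant multiple of $R^{-Q}\int_{R^2/s}^\infty u^{Q-2}e^{-cu}\,du$; since $s\le R^2$ implies $R^2/s\ge 1$, the standard tail estimate of the incomplete Gamma integral absorbs the polynomial factor into the exponential (at the cost of shrinking $c$), producing the stated bound.

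The main obstacle is the control of the initial boundary term as $\epsilon\to 0^+$: one needs simultaneously that $\phi$ vanishes near $x$ (to bypass the distributional singularity of $p_\epsilon$ at $x$) and that the Gaussian bound \eqref{x2.1} is available so that $p_\epsilon$ is uniformly small on the support of $\phi$; the choice of $\phi$ supported in an annular shell genuinely separated from $x$ is what makes both requirements compatible. Once this limit is justified, the remaining estimates are routine consequences of the tools gathered in Section 2.
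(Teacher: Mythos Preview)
Your approach is exactly what the paper intends: it simply refers to ``a slight modification to the proof of \cite[Lemma 3.3]{krs}'', and the modification is precisely yours --- take the cutoff $\phi$ to be supported in an annulus bounded away from $x$, run the Caccioppoli energy identity for $t\mapsto \int \phi^2 p_t^2$, and use the Gaussian upper bound \eqref{x2.1} on the support of $\phi$ and $D\phi$ to control both the vanishing initial term and the right-hand side.

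One point to flag: your computation actually produces $CR^{-Q}e^{-cR^2/s}$, not $CR^{-Q/2}e^{-cR^2/s}$ (you write $R^{-Q}\int_{R^2/s}^\infty u^{Q-2}e^{-cu}\,du$ and then say this gives ``the stated bound'', but the power of $R$ does not match the statement). This is almost certainly a typo in the lemma rather than an error in your argument: dimensional considerations give $R^{-Q}$, and more importantly the paper's own application in Proposition~\ref{p3.2} uses
\[
\Big(\int_0^t\!\!\int_{5B_R(x_0)\setminus B_R(x_0)}|Dp|^2\,d\mu\,ds\Big)^{1/2}\le CR^{-Q/2}e^{-cR^2/t},
\]
i.e.\ the bound $R^{-Q}$ before taking the square root, exactly what your argument yields. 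So your proof is correct for the estimate that is actually needed; just be explicit that the exponent you obtain is $-Q$.
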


\subsection{Orlicz and Zygmund spaces}
\hskip\parindent
A continuous, strictly increasing function $\Phi: [0,\fz]\to[0,\fz]$ with
$\Phi(0) = 0$ and $\Phi(\fz)=\fz$ is called an Orlicz function.
If $\Phi$ is also convex, then $\Phi$ is called a Young function.
The Orlicz space $\Phi(X)$ is then defined to be the space of all
measurable functions $f$ with $\int_X\Phi(|f|)\,d\mu<\fz.$
For $f\in \Phi(X)$, we define its Luxemburg norm as
$$\|f\|_{\Phi(X)}:=\inf\lf\{\lz>0: \,\int_X\Phi\lf(\frac{|f|}{\lz}\r)\,d\mu\le1\r\}.$$
For a Young function $\Phi$, the space $\Phi(X)$ is then a  Banach space;
see \cite{rr91}.

Functions of the type
$$\Phi_\az(t)=t\log^\az(e + t)$$
with $\az>0$ are of particular importance for us.
For such functions, the spaces $\Phi_\az(X)$ are also called Zygmund spaces.
The complementary function of $\Phi_\az$,
$\Psi_{1/\az}$, is equivalent to $\exp{t^{1/\az}}-1$.
Moreover, we have the Orlicz-H\"older inequality
\begin{equation}\label{x2.3}
\|fg\|_{L^1(X)}\le C\|f\|_{\Phi_\az(X)}\|g\|_{\Psi_{1/\az}(X)},
\end{equation}
where $C$ depends only on $Q$ and $\az$; see \cite{rr91,aikm}.

Since our aim is to prove a Moser-Trudinger type inequality,
of the form
\begin{equation*}
\fint_{B_R(y_0)} \exp\lf(c|f|\r)^{\frac {Q}{Q-1}}\,d\mu\le C,
\end{equation*}
in what follows, we modify the Orlicz function $\Psi_{\az}(t)=\exp{t^{\az}}-1$ to
the new function
$$\Psi_{R,\az}(t)=\frac{e^{t^{\az}}-1}{R^Q},$$
where $\az,R\in (0,\fz)$. Then the complementary function $\Phi_{R,1/\az}(t)$ of $\Psi_{R,\az}$
is equivalent to $t[\log(e+R^Qt)]^{1/\az}$. Moreover, $\Psi_{R,\az}$ and
$\Phi_{R,1/\az}$ satisfy the Orlicz-H\"older inequality
\begin{equation}\label{x2.4}
\|fg\|_{L^1(X)}\le C\|f\|_{\Psi_{R,\az}(X)}\|g\|_{\Phi_{R,1/\az}(X)}.
\end{equation}

\subsection{Several auxiliary results}
\hskip\parindent
We first recall the Sobolev-Poincar\'e inequalities, which follow from the
Poincar\'e inequality, see \cite{bm93,hak95,hak,sal}.
There exist positive
constants $c,C$, only depending on $C_P$ and $C_Q$, such that for all
$u\in H^{1,2}_0(B_r(x))$ with $r\le R_0$
\begin{eqnarray}\label{x2.5}
  &&\|u\|_{L^{2^\ast}(B_r(x))}\le C \||Du|\|_{L^2(B_r(x))},
\end{eqnarray}
when $Q>2$; while
\begin{eqnarray}\label{x2.6}
  &&\fint_{B_r(x)}\exp\lf(\frac{c|u|}{\||Du|\|_{L^2(B_r(x))}}\r)^2\,d\mu\le C
\end{eqnarray}
for $Q=2$; and for $Q\in (1,2)$
\begin{eqnarray}\label{x2.7}
\|u\|_{L^\fz(B_r(x))}\le Cr^{1-Q/2}\||Du|\|_{L^2(B_r(x))}.
\end{eqnarray}

\begin{lem}\label{l2.4}
Let $Q\in (1,\fz)$ and $p\in (\frac Q2,\fz]\cap (1,\fz]$.
Then there exists $C>0$ such that for all
$u\in H^{1,2}_0(B)$ and $g\in L^p(B)$ that satisfy $\Delta u=g$ in $B$,
where $B=B_R(y_0)$ with $R<R_0$,
$$\|u\|_{L^\fz(B)}\le CR^2\mu(B)^{-1/p}\|g\|_{L^p(B)}.$$
\end{lem}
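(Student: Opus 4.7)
\medskip

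\noindent\emph{Proof proposal.} The claim is the classical Stampacchia-type $L^\infty$ bound for solutions of $\Delta u=g$ with zero Dirichlet data, driven by an $L^p$ datum with $p>Q/2$. The natural route is Moser iteration based on the Sobolev--Poincar\'e inequalities \eqref{x2.5}--\eqref{x2.7}, splitting into three cases according to the position of $Q$ relative to $2$.

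\medskip

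\noindent\textbf{The easy case $Q\in (1,2)$.} Testing \eqref{1.2} with the admissible function $u$ itself gives
\[
\|Du\|_{L^2(B)}^2 \;=\; -\int_B g\,u\,d\mu \;\le\; \|g\|_{L^p(B)}\,\mu(B)^{1-1/p}\,\|u\|_{L^\infty(B)}.
\]
Plugging this into \eqref{x2.7}, namely $\|u\|_{L^\infty(B)}\le CR^{1-Q/2}\|Du\|_{L^2(B)}$, and squaring, one absorbs $\|u\|_{L^\infty(B)}$ and uses $\mu(B)\simeq R^Q$ together with $R^{2-Q}\mu(B)^{1-1/p}=R^2\mu(B)^{-1/p}$ to conclude in one line.

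\medskip

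\noindent\textbf{The main case $Q\ge 2$.} I run Moser iteration. For $\beta\ge 1$ and $M>0$, the truncation
\[
\phi_{\beta,M}\;:=\;u\,\min(|u|,M)^{2(\beta-1)}
\]
lies in $H^{1,2}_0(B)$ (composition with a bounded Lipschitz function preserves $H^{1,2}_0$), so it is admissible in \eqref{1.2}. Using the Leibniz rule for the Cheeger derivative from Section 2.1 to compute $D\phi_{\beta,M}$, and then letting $M\to\infty$, one obtains for $v_\beta:=u|u|^{\beta-1}$ the energy inequality
\[
\int_B |Dv_\beta|^2\,d\mu \;\le\; C\beta\int_B |g|\,|u|^{2\beta-1}\,d\mu.
\]
Applying \eqref{x2.5} to $v_\beta$ (or, for $Q=2$, \eqref{x2.6} which gives $v_\beta$ in every $L^q$) together with H\"older with exponent $p$ on the right-hand side yields the recursion
\[
\|u\|_{L^{2^\ast\beta}(B)}^{2\beta}\;\le\;C\beta\,\|g\|_{L^p(B)}\,\|u\|_{L^{(2\beta-1)p'}(B)}^{2\beta-1},
\]
where $p'=p/(p-1)$.

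\medskip

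\noindent\textbf{Closing the iteration.} Define $\beta_0$ so that $(2\beta_0-1)p'=2$, and then $\beta_{k+1}$ by the rule $(2\beta_{k+1}-1)p'=2^\ast\beta_k$, i.e.\ $\beta_{k+1}=\gamma\beta_k+\tfrac12$ with $\gamma:=2^\ast/(2p')$. The hypothesis $p>Q/2$ is exactly $\gamma>1$, as a direct computation with $2^\ast=2Q/(Q-2)$ shows; hence $\beta_k\nearrow\infty$ geometrically and the product $\prod_k(C\beta_k)^{1/\beta_k}$ converges. Iterating and tracking the volume factors via $\mu(B)\simeq R^Q$, in the limit $k\to\infty$ the left-hand norms tend to $\|u\|_{L^\infty(B)}$ and one obtains
\[
\|u\|_{L^\infty(B)} \;\le\; CR^2\mu(B)^{-1/p}\|g\|_{L^p(B)},
\]
as desired. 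The endpoint $p=\infty$ follows by choosing any finite $p>Q/2$ and H\"older.

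\medskip

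\noindent\textbf{Main obstacle.} The only genuine technical point is the justification of the power test function $\phi_{\beta,M}$ in the non-smooth setting: its membership in $H^{1,2}_0(B)$ and the validity of the chain/Leibniz rule for its Cheeger derivative demand the truncation--and--approximation step indicated above, invoking the Cheeger calculus from Section 2.1. After that, the bookkeeping in the Moser iteration is routine but must be done carefully to pin down the exact scaling $R^2\mu(B)^{-1/p}$ and to absorb the geometric factors coming from $\mu(B)\simeq R^Q$.
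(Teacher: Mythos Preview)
Your treatment of the subcritical case $Q\in(1,2)$ is identical to the paper's. For $Q\ge 2$, however, you and the paper take the two classical but distinct routes to the same $L^\infty$ bound. The paper does \emph{not} run Moser iteration; it uses the De~Giorgi--Stampacchia level-set method: test with $\zeta_k(u):=(u-k)_+ -(u+k)_-$, combine the energy estimate with the Sobolev inequality \eqref{x2.5} to get
\[
(h-k)\,\mu(A(h))^{1/2^\ast}\le C\,\mu(A(k))^{1/2_\ast-1/p}\,\|g\|_{L^p(B)},\qquad A(k)=\{|u|>k\},
\]
and then invoke the standard decay lemma (the exponent $(1/2_\ast-1/p)2^\ast>1$ is exactly $p>Q/2$) to conclude $\mu(A(d))=0$ for $d=CR^2\mu(B)^{-1/p}\|g\|_{L^p(B)}$. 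This produces the precise scaling in one stroke, with no infinite product to track.

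Your Moser scheme is correct in principle and $\gamma=2^\ast/(2p')>1\Leftrightarrow p>Q/2$ is exactly right, but two points deserve care. First, your choice of $\beta_0$ from $(2\beta_0-1)p'=2$ gives $\beta_0<1$ whenever $Q/2<p<2$ (which occurs for $2<Q<4$), so the power test function $\phi_{\beta_0,M}$ is not directly admissible; you should instead start at $\beta_0=1$, using that $p'\le 2^\ast$ (equivalent to $p\ge 2_\ast$, automatic here) to bound $\|u\|_{L^{p'}}$ by $\|u\|_{L^{2^\ast}}$ with the appropriate volume factor. Second, since the target estimate has no $\|u\|_{L^2}$ on the right, you must first convert the starting norm into $\|g\|_{L^p}$ via the step $\|u\|_{L^{2^\ast}}\le C\mu(B)^{1/p'-1/2^\ast}\|g\|_{L^p}$ (obtained from $\int|Du|^2=-\int gu$ and Sobolev), and then carefully sum the exponents of $\|g\|_{L^p}$ and the volume factors through the iteration. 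This bookkeeping is routine but is precisely where the clean scaling $R^2\mu(B)^{-1/p}$ has to emerge; the level-set argument in the paper sidesteps it entirely.
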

\begin{proof}
We note that \cite[Theorem 4.1]{bm} states that the above inequality holds for
$p>\max\{\frac Q2,2\}$, assuming that the measure is doubling.
As the proof is similar to that of \cite[Theorem 4.1]{bm},
we here give a sketch of proof to indicate the difference of the range of $p$.

For $k\in\cn$, let $$\zeta_k(u):=\max\{u-k,0\}-\min\{u+k,0\},$$
and $A(k):=\{x\in B:\,|u|>k\}$. Then we have $\zeta_k(u)\in H^{1,2}_0(B).$
Taking a truncation argument as in
\cite[p.146]{bm}, we arrive at
$$\int_{B}|D\zeta_k(u)|^2\,d\mu\le \int_{B}g\zeta_k(u)\,d\mu.$$
Let us first assume that $Q>2$. Then by the Sobolev inequality
and the H\"older inequality, we obtain
\begin{eqnarray*}
\int_{B}|D\zeta_k(u)|^2\,d\mu&&\le \lf(\int_{A(k)}|g|^{2_\ast}\,d\mu\r)^{1/2_\ast}
\|\zeta_k(u)\|_{L^{2^\ast}(B)}\\
&&\le C\mu(A(k))^{1/2_\ast-1/p}\|g\|_{L^p(B)}\|D\zeta_k(u)\|_{L^{2}(B)},
\end{eqnarray*}
hence, $\|D\zeta_k(u)\|_{L^{2}(B)}\le C\mu(A(k))^{1/2_\ast-1/p}\|g\|_{L^p(B)}$.
Applying the Sobolev inequality again, we conclude that
\begin{eqnarray*}
\lf(\int_{B}|\zeta_k(u)|^{2^\ast}\,d\mu\r)^{1/2^\ast}&&\le
C\lf(\int_{B}|D\zeta_k(u)|^2\,d\mu\r)^{1/2}
\le C\mu(A(k))^{1/2_\ast-1/p}\|g\|_{L^p(B)}.
\end{eqnarray*}
From this inequality, we further deduce that for $h>k>0$, we have
\begin{eqnarray*}
(h-k)\mu(A(h))^{1/2^\ast}\le \lf(\int_{B}|\zeta_k(u)|^{2^\ast}\,d\mu\r)^{1/2^\ast}
\le C\mu(A(k))^{1/2_\ast-1/p}\|g\|_{L^p(B)},
\end{eqnarray*}
and hence,
\begin{eqnarray*}
\mu(A(h))\le (C\|g\|_{L^p(B)})^{2^\ast}
\frac{\mu(A(k))^{(\frac 1{2_\ast}-\frac 1p)2^\ast}}{(h-k)^{2^\ast}}.
\end{eqnarray*}
By the fact that $(\frac 1{2_\ast}-\frac 1p)2^\ast>1$ and an argument as \cite[p.147]{bm},
we conclude that $\mu(A(d))=0$, for $d=CR^2\mu(B)^{-1/p}\|g\|_{L^{p}(B)}$. Hence, we
obtain that $\|u\|_{L^\fz(B)}\le CR^2\mu(B)^{-1/p}\|g\|_{L^{p}(B)}$.

The proof of $Q=2$ is similar to the above argument, except when applying the Sobolev
inequality, we need to choose a sufficient large exponent, depending on $p$, to
substitute for $2^\ast$. We omit the details.

When $Q\in (1,2)$, by \eqref{x2.7} and the H\"older inequality, we have
\begin{eqnarray*}
\|u\|^2_{L^\fz(B)}\le CR^{2-Q}\||Du|\|^2_{L^2(B)}=CR^{2-Q}\int_B gu\,d\mu\le
CR^2\mu(B)^{-1/p}\|g\|_{L^p(B)}\|u\|_{L^\fz(B)},
\end{eqnarray*}
proving the lemma.
\end{proof}

Recall that $\Phi_{R,1/\az}(t)=t[\log(e+R^Qt)]^{1/\az}$ and $\Psi_{R,\az}(t)=\frac{1}{R^Q}(e^{t^{\az}}-1).$
\begin{lem}\label{l2.5}
Let $Q\in (1,\fz)$ and $p\in [1,\fz]$. Then there exists $C>0$, depending on $p,Q$,
such that for all
$u\in H^{1,2}_0(B)$ and $g\in L^p(B)$ that satisfy
$\Delta u=g$ in $B$, where $B=B_R(y_0)$ with $R<R_0$:

(i) when $Q>2$ and $p=2_\ast$, $\||Du|\|_{L^2(B)}\le C\|g\|_{L^{2_\ast}(B)};$

(ii) when $Q=2$, for any $p>1$, $\||Du|\|_{L^2(B)}\le C\mu(B)^{1-1/p}\|g\|_{L^p(B)};$

(iii) when $Q\in (1,2)$, $\||Du|\|_{L^2(B)}\le CR^{1-Q/2}\|g\|_{L^1(B)}.$
\end{lem}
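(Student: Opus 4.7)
The plan is to test the weak formulation \eqref{1.2} of the Poisson equation against the solution $u$ itself. Since $u\in H^{1,2}_0(B)$, it is an admissible test function, and the Leibniz-type identity from Section 2 together with \eqref{1.2} yields
\begin{equation*}
\int_B |Du|^2\,d\mu=-\int_B g\,u\,d\mu.
\end{equation*}
From here each case is reduced to a Sobolev-type bound for $\|u\|$ in the dual norm of $g$. The main (very mild) obstacle is choosing the right functional inequality from \eqref{x2.5}--\eqref{x2.7} in each regime and, in the borderline case $Q=2$, converting the exponential integrability bound into an $L^{p'}$ bound.

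For part (i), where $Q>2$ and $p=2_\ast=\frac{2Q}{Q+2}$, I would observe that $(2_\ast)'=2^\ast$. Hölder's inequality then gives
\begin{equation*}
\int_B |Du|^2\,d\mu\le \|g\|_{L^{2_\ast}(B)}\|u\|_{L^{2^\ast}(B)},
\end{equation*}
and the Sobolev inequality \eqref{x2.5} bounds $\|u\|_{L^{2^\ast}(B)}\le C\||Du|\|_{L^2(B)}$. Dividing by $\||Du|\|_{L^2(B)}$ gives the result.

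For part (ii), where $Q=2$, I would use Hölder's inequality with exponents $p$ and $p'=p/(p-1)$ to get
\begin{equation*}
\int_B |Du|^2\,d\mu\le \|g\|_{L^p(B)}\|u\|_{L^{p'}(B)}.
\end{equation*}
The Moser--Trudinger inequality \eqref{x2.6} implies that $|u|/\||Du|\|_{L^2(B)}$ lies in any Orlicz class $\exp(L^2)$, so expanding the exponential gives
\begin{equation*}
\fint_B \Bigl(\frac{|u|}{\||Du|\|_{L^2(B)}}\Bigr)^{p'}\,d\mu\le C(p'),
\end{equation*}
whence $\|u\|_{L^{p'}(B)}\le C\mu(B)^{1/p'}\||Du|\|_{L^2(B)}$. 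Inserting this, using $1/p'=1-1/p$, and dividing by $\||Du|\|_{L^2(B)}$ yields the claim. For part (iii), where $Q\in(1,2)$, I would apply Hölder with exponents $1,\infty$ and invoke \eqref{x2.7}:
\begin{equation*}
\int_B |Du|^2\,d\mu\le \|g\|_{L^1(B)}\|u\|_{L^\infty(B)}\le CR^{1-Q/2}\|g\|_{L^1(B)}\||Du|\|_{L^2(B)},
\end{equation*}
and a final division closes the argument. No tricky step is expected; the only point requiring care is the extraction of an $L^{p'}$ bound from the exponential integrability in case (ii).
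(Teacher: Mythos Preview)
Your proposal is correct and follows essentially the same argument as the paper: test the equation against $u$ to obtain $\int_B|Du|^2\,d\mu=-\int_B gu\,d\mu$, then in each regime pair H\"older's inequality with the appropriate Sobolev-type bound \eqref{x2.5}, \eqref{x2.6}, or \eqref{x2.7}. The only cosmetic difference is that in case (ii) the paper records the consequence of \eqref{x2.6} directly as $\|u\|_{L^q(B)}\le C\mu(B)^{1/q}\||Du|\|_{L^2(B)}$ for all $q\ge 1$, which is exactly your extraction of an $L^{p'}$ bound from the exponential integrability.
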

\begin{proof}
By using the H\"older inequality and \eqref{x2.5}, we
conclude that
\begin{eqnarray*}
\int_{B} |Du(x)|^2d\mu(x)&&=-\int_{B} g(x)u(x)\,d\mu(x)\le \|g\|_{L^{2_\ast}(B)}\|u\|_{L^{2^\ast}(B)}
\le C\|g\|_{L^{2_\ast}(B)}\|Du\|_{L^{2}(B)}.
\end{eqnarray*}
Hence, $\|Du\|_{L^2(B)}\le C\|g\|_{L^{2_\ast}(B)}$,
which proves (i).

For (ii), by \eqref{x2.6}, we see that for any $q\ge 1$,
$$\|u\|_{L^q(B_r(x))}\le C\mu(B)^{1/q}\||Du|\|_{L^2(B_r(x))}. $$
From this and the H\"older inequality, we deduce that
\begin{eqnarray*}
\int_{B} |Du(x)|^2d\mu(x)&&=-\int_{B} g(x)u(x)\,d\mu(x)\le
\|g\|_{L^p(B)}\|u\|_{L^{\frac{p}{p-1}}(B)}
\le C\mu(B)^{1-1/p}\|g\|_{L^p(B)}\|Du\|_{L^2(B)},
\end{eqnarray*}
which implies $\||Du|\|_{L^2(B)}\le \mu(B)^{1/p-1}\|g\|_{L^p(B)}$.

For (iii), by \eqref{x2.7}, we have
\begin{eqnarray*}
\int_{B} |Du(x)|^2d\mu(x)&&=-\int_{B} g(x)u(x)\,d\mu(x)\le \|g\|_{L^1(B)}\|u\|_{L^\fz(B)}\le C\|g\|_{L^1(B)}R^{1-\frac Q2}\|Du\|_{L^2(B)}
\end{eqnarray*}
proving the lemma.
\end{proof}

\begin{lem}\label{l2.6}
Let $Q\in (1,\fz)$, $p\in (\frac Q2,\fz]\cap (1,\fz]$ and $B=B_R(y_0)$ with
$R<R_0$. For every $g\in L^p(B)$, there exists $u\in H^{1,2}_0(B)$ such that
$\Delta u=g$ in $B$.
\end{lem}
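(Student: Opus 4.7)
The plan is to produce $u$ via the Lax--Milgram theorem applied to the symmetric bilinear form $a(v,w):=\int_B Dv\cdot Dw\,d\mu$ on the Hilbert space $H^{1,2}_0(B)$, together with the linear functional $L(v):=-\int_B g v\,d\mu$. First I would observe that on $H^{1,2}_0(B)$ the quantity $\||Dv|\|_{L^2(B)}$ is an equivalent norm: extending $v$ by zero across $\pa B$ yields a function in $H^{1,2}(2B)$, and applying \eqref{1.1} on $2B$ (using doubling, which follows from $Q$-regularity) together with the fact that $v\equiv 0$ on $2B\setminus B$, a set of $\mu$-measure comparable to $\mu(B)$, gives $\|v\|_{L^2(B)}\le CR\||Dv|\|_{L^2(B)}$. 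Consequently $a(\cdot,\cdot)$ is bounded and coercive on $H^{1,2}_0(B)$.

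The substantive step is to show that $L$ is a bounded linear functional on $(H^{1,2}_0(B),\||D\cdot|\|_{L^2})$. I would split into the three regimes appearing in Lemma \ref{l2.5} and invoke the Sobolev--Poincar\'e estimates \eqref{x2.5}, \eqref{x2.6}, \eqref{x2.7} for $H^{1,2}_0$-functions. When $Q>2$, the assumption $p>Q/2$ forces the H\"older conjugate $p'=p/(p-1)$ to satisfy $p'<2^\ast$, so H\"older's inequality together with \eqref{x2.5} yields $|L(v)|\le \|g\|_{L^p(B)}\|v\|_{L^{p'}(B)}\le C\|g\|_{L^p(B)}\||Dv|\|_{L^2(B)}$. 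When $Q=2$, the Moser--Trudinger embedding \eqref{x2.6} gives $\|v\|_{L^{p'}(B)}\le C\mu(B)^{1/p'}\||Dv|\|_{L^2(B)}$ for every $p'\in[1,\fz)$, which in particular handles any $p>1$. When $Q\in(1,2)$, inequality \eqref{x2.7} provides $\|v\|_{L^\fz(B)}\le CR^{1-Q/2}\||Dv|\|_{L^2(B)}$, and bounding $|L(v)|$ by $\|g\|_{L^1(B)}\|v\|_{L^\fz(B)}$ suffices. In every case $|L(v)|\le C(B,g)\||Dv|\|_{L^2(B)}$.

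With continuity and coercivity of $a$ and boundedness of $L$ in hand, Lax--Milgram produces a unique $u\in H^{1,2}_0(B)$ satisfying $a(u,\phi)=L(\phi)$ for all $\phi\in H^{1,2}_0(B)$, which is precisely the weak formulation \eqref{1.2}; hence $\Delta u=g$ in $B$. The main technical point to verify carefully is the first step, namely justifying a Poincar\'e-type inequality on $H^{1,2}_0(B)$ in the abstract setting, but this is standard once one uses the zero extension to $2B$ as indicated; beyond that the argument is a routine variational construction, with Lemma \ref{l2.5}'s embeddings doing the rest of the work.
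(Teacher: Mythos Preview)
Your proposal is correct and takes a genuinely different route from the paper. The paper proceeds by approximation: it truncates $g$ to bounded $g_k=g\chi_{\{|g|\le k\}}$, invokes the existence result of Biroli--Mosco \cite[p.~131]{bm93} to solve $\Delta u_k=g_k$ in $H^{1,2}_0(B)$ for each $k$, and then uses the a~priori bounds of Lemmas~\ref{l2.4} and~\ref{l2.5} to show that $\{u_k\}$ is Cauchy in $H^{1,2}_0(B)$, so that the limit $u$ solves $\Delta u=g$. Your direct Lax--Milgram argument is more self-contained: it avoids the external reference for the bounded-data case and dispenses with Lemma~\ref{l2.4} altogether, relying only on the Sobolev embeddings \eqref{x2.5}--\eqref{x2.7} (essentially the content of Lemma~\ref{l2.5}) to bound the linear functional $L$. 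The paper's approach, on the other hand, fits naturally with the truncation-and-limit arguments used later in Section~5.

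One small remark: your zero-extension argument to $2B$ for the Poincar\'e inequality on $H^{1,2}_0(B)$ would require $2R<R_0$, which the hypothesis $R<R_0$ does not guarantee. This is harmless, since the Sobolev inequalities \eqref{x2.5}--\eqref{x2.7} you already invoke are stated for $H^{1,2}_0(B_r)$ with $r\le R_0$ and immediately yield $\|v\|_{L^2(B)}\le CR\,\||Dv|\|_{L^2(B)}$ as well (combine with H\"older and $Q$-regularity in each regime), furnishing coercivity without leaving $B$.
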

\begin{proof}
For each $k\in \cn$, let $g_k=g\chi_{B\cap \{|g|\le k \}}$.
Then by \cite[p.131]{bm93}, there exists $u_k\in H^{1,2}_0(B)$ such that
$\Delta u_k=g_k$ in $B$.
Moreover, by Lemma \ref{l2.4} and Lemma \ref{l2.5}, we have
\begin{eqnarray*}
\|u_k-u_j\|_{L^2(B)}+\||D(u_k-u_j)|\|_{L^2(B)}&&\le C_R\|g_k-g_j\|_{L^p(B)}\to 0,
\end{eqnarray*}
as $k,j\to\fz$. Hence $\{u_k\}_{k\in\cn}$ is a Cauchy sequence in $H^{1,2}_0(B)$,
and there exists $ u\in H^{1,2}_0(B)$ such that $\lim_{k\to\fz}u_k=u$ in
$H^{1,2}_0(B)$. Moreover, for each $\phi\in H^{1,2}_0(B)$,
we have
\begin{eqnarray*}
-\int_{B} Du(x)\cdot D\phi(x)\,d\mu(x)&&=-\lim_{k\to\fz}\int_{B}Du_k(x)\cdot D\phi(x)\,d\mu(x)\\
&&=\lim_{k\to\fz} \int_B g_k(x)\phi(x)\,d\mu(x)=\int_{B} g(x) \phi(x)\,d\mu(x),
\end{eqnarray*}
proving the lemma.
\end{proof}

Combining Lemma \ref{l2.4} and Lemma \ref{l2.6},
we deduce the following estimate.
\begin{lem}\label{l2.8} Let $Q\in (1,\fz)$ and  $p\in(\frac Q2,\fz]\cap (1,\fz]$.
Then there exists a positive constant $C$ such that for all
$u\in H^{1,2}_{\loc}(X)$ and $g\in L^p_\loc(X)$ that satisfy
$\Delta u=g$ in $2B$, where $B=B_R(y_0)$ with $R< R_0/2$,
\begin{eqnarray*}
\|u\|_{L^\fz(B)}\le C[R^{-Q/2}\|u\|_{L^2(2B)}
+R^{2-Q/p}\|g\|_{L^p(2B)}].
\end{eqnarray*}
\end{lem}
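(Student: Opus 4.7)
The plan is to split $u$ into a Cheeger-harmonic part plus a particular solution with zero boundary data, and then estimate each piece using the previous two lemmas together with a sub-mean-value inequality for Cheeger-harmonic functions. Since $2R<R_0$ and $p\in(\frac Q2,\fz]\cap(1,\fz]$, Lemma \ref{l2.6} applied to the ball $2B$ produces a $v\in H^{1,2}_0(2B)$ with $\Delta v=g$ in $2B$. Applying Lemma \ref{l2.4} to $v$, together with the $Q$-regularity of $\mu$ (so $\mu(2B)\sim R^Q$), yields
$$\|v\|_{L^\fz(2B)}\le C(2R)^{2}\mu(2B)^{-1/p}\|g\|_{L^p(2B)}\le CR^{2-Q/p}\|g\|_{L^p(2B)}.$$

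Next, set $w:=u-v$. Because both $u$ and $v$ satisfy $\Delta\cdot=g$ in $2B$ in the weak sense of \eqref{1.2}, the function $w$ lies in $H^{1,2}(2B)$ and is Cheeger-harmonic in $2B$. Under our standing hypotheses (local $Q$-regularity and the $(1,2)$-Poincar\'e inequality \eqref{1.1}), a standard Moser iteration, carried out in this setting in \cite{bm}, gives the sub-mean-value estimate
$$\|w\|_{L^\fz(B)}\le C\lf(\fint_{2B}|w|^2\,d\mu\r)^{1/2}\le CR^{-Q/2}\|w\|_{L^2(2B)}.$$

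Finally, by the triangle inequality and $Q$-regularity,
$$\|w\|_{L^2(2B)}\le \|u\|_{L^2(2B)}+\|v\|_{L^2(2B)}\le \|u\|_{L^2(2B)}+C R^{Q/2}\|v\|_{L^\fz(2B)},$$
so inserting the bound on $\|v\|_{L^\fz(2B)}$ gives
$$R^{-Q/2}\|w\|_{L^2(2B)}\le R^{-Q/2}\|u\|_{L^2(2B)}+CR^{2-Q/p}\|g\|_{L^p(2B)}.$$
Combining with $\|u\|_{L^\fz(B)}\le\|w\|_{L^\fz(B)}+\|v\|_{L^\fz(B)}$ produces the claimed inequality.

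The only non-mechanical ingredient here is the sub-mean-value bound for Cheeger-harmonic functions, and that is exactly what I expect to be the main obstacle to cite rather than re-prove: one must be sure that the Moser-iteration argument of \cite{bm} applies at the level of the abstract Cheeger differential $D$ used in \eqref{1.2}, rather than only at the level of their Dirichlet form. Since $|Du|$ is comparable to $\Lip u$ (and to the minimal upper gradient), and since the Sobolev-Poincar\'e inequalities \eqref{x2.5}--\eqref{x2.7} are stated for $|Du|$, the iteration goes through without change, so the citation is legitimate.
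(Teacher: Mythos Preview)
Your proof is correct and follows essentially the same route as the paper's: construct a particular solution $v\in H^{1,2}_0(2B)$ via Lemma~\ref{l2.6}, bound it by Lemma~\ref{l2.4}, apply the sub-mean-value inequality from \cite{bm} to the Cheeger-harmonic difference $u-v$, and combine. The paper cites \cite[Theorem 5.4]{bm} for the harmonic $L^\infty$ bound and omits the final triangle-inequality step you spell out, but otherwise the arguments are identical.
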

\begin{proof}
by Lemma \ref{l2.6}, there exists
$\wz u \in H_0^{1,2}(2B)$ such that $\Delta \wz u=g$ in $2B$.
Then from Lemma \ref{l2.4}, we deduce that
\begin{equation*}
\|\wz u\|_{L^\fz(2B)}\le CR^2\mu(B)^{-1/p}\|g\|_{L^p(2B)}.
\end{equation*}
Now $u-\wz u$ is Cheeger-harmonic in $2B$, which together with
\cite[Theorem 5.4]{bm} implies that
\begin{eqnarray*}
\|u-\wz u\|_{L^\fz(B)}\le CR^{-Q/2}\|u-\wz u\|_{L^2(2B)}.
\end{eqnarray*}
The above two estimates give the desired results.
\end{proof}

We also need the H\"older continuity of the solutions.
\begin{lem}\label{l2.7}
Let $Q\in (1,\fz)$ and $p\in (\frac Q2,\fz]\cap (1,\fz]$.
Then there exist $C>0$ and $\gz\in (0,1)$  such that for all
$u\in H^{1,2}_\loc(X)$ and $g\in L^p_\loc(X)$ that satisfy
$\Delta u=g$ in $4B$, where $B=B_R(y_0)$ with $R<R_0/4$,
and almost all $x,y\in B$,
$$|u(x)-u(y)|\le C\lf\{R^{-Q/2}\|u\|_{L^2(4B)}
+R^{2-Q/p}\|g\|_{L^p(4B)}\r\}\lf(\frac{d(x,y)}{R}\r)^\gz.$$
\end{lem}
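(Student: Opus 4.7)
The strategy is the classical Campanato-type oscillation decay argument: split $u$ locally into a Cheeger-harmonic part (which has built-in Hölder decay of oscillation) plus a part with zero boundary values solving $\Delta v=g$ (whose $L^\infty$ norm is controlled by the data), and then iterate. Fix $z\in B$ and $r\in(0,R)$. By Lemma \ref{l2.6}, applied on $B_r(z)\subset 2B$, there exists $v\in H^{1,2}_0(B_r(z))$ with $\Delta v=g$ on $B_r(z)$, and by Lemma \ref{l2.4}
\begin{equation*}
\|v\|_{L^\fz(B_r(z))}\le Cr^{2-Q/p}\|g\|_{L^p(4B)}.
\end{equation*}
The function $w:=u-v$ is then Cheeger-harmonic on $B_r(z)$.

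The key input is Hölder decay of oscillation for Cheeger-harmonic functions: by the De Giorgi--Moser theory in this setting (which is exactly the content of \cite[Theorem 5.4]{bm}, valid under our $(1,2)$-Poincaré inequality and $Q$-regularity assumptions), there exist $\az_0\in(0,1)$ and $C_0>0$, independent of $z$ and $r$, such that for every $\tau\in(0,1/2]$
\begin{equation*}
\underset{B_{\tau r}(z)}{\mathop{\mathrm{osc}}}\, w \le C_0\tau^{\az_0}\,\underset{B_r(z)}{\mathop{\mathrm{osc}}}\, w.
\end{equation*}
Combined with the $L^\fz$-bound on $v$, this yields
\begin{equation*}
\underset{B_{\tau r}(z)}{\mathop{\mathrm{osc}}}\, u \le C_0\tau^{\az_0}\,\underset{B_r(z)}{\mathop{\mathrm{osc}}}\, u + C_1 r^{2-Q/p}\|g\|_{L^p(4B)},
\end{equation*}
with $C_1$ depending only on $C_0$ and the constants from Lemma \ref{l2.4}. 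Fix $\tau$ so that $C_0\tau^{\az_0}\le \tau^{\gz}$ for some $\gz\in(0,\min\{\az_0,2-Q/p\})$. A standard iteration lemma (applied to the non-decreasing function $r\mapsto\underset{B_r(z)}{\mathop{\mathrm{osc}}}\, u$) then gives
\begin{equation*}
\underset{B_\rho(z)}{\mathop{\mathrm{osc}}}\, u \le C\Bigl(\frac{\rho}{R}\Bigr)^{\gz}\Bigl[\,\underset{B_R(z)}{\mathop{\mathrm{osc}}}\, u + R^{2-Q/p}\|g\|_{L^p(4B)}\Bigr]
\end{equation*}
for every $\rho\in(0,R]$ with $B_R(z)\subset 4B$.

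Finally, for $x,y\in B$ set $\rho=d(x,y)$; if $\rho\ge R$ we are done by the trivial bound combined with Lemma \ref{l2.8}, and otherwise we apply the oscillation estimate at $z=x$ on the ball $B_{2R}(x)\subset 4B$, and control $\underset{B_{2R}(x)}{\mathop{\mathrm{osc}}}\, u\le 2\|u\|_{L^\fz(B_{2R}(x))}$ by Lemma \ref{l2.8} to obtain
\begin{equation*}
|u(x)-u(y)|\le C\Bigl\{R^{-Q/2}\|u\|_{L^2(4B)}+R^{2-Q/p}\|g\|_{L^p(4B)}\Bigr\}\Bigl(\frac{d(x,y)}{R}\Bigr)^{\gz},
\end{equation*}
which is the claim. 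The only non-routine ingredient is the Hölder decay for Cheeger-harmonic functions, which is the main obstacle and is supplied by \cite[Theorem 5.4]{bm}; the remaining steps are standard Morrey--Campanato iteration.
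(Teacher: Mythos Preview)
Your approach is essentially identical to the paper's: split $u$ locally into a Cheeger-harmonic part plus a zero-trace solution of $\Delta v=g$, bound $v$ in $L^\infty$ via Lemma \ref{l2.4}, use oscillation decay for the harmonic part, iterate, and finish with Lemma \ref{l2.8}. The only minor difference is that the paper does not invoke a ready-made H\"older/oscillation-decay statement for Cheeger-harmonic functions; instead it derives the one-step decay
\[
\underset{B_R}{\mathop{\mathrm{osc}}}\,(u-\tilde u)\ \le\ \tfrac{C_3-1}{C_3+1}\,\underset{B_{2R}}{\mathop{\mathrm{osc}}}\,(u-\tilde u)
\]
directly from the Harnack inequality \cite[Theorem~1.1]{bm} applied to $M_2+M_R-(u-\tilde u)$ and $(u-\tilde u)-m_2+M_R$. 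Note also that the reference you give, \cite[Theorem~5.4]{bm}, is in fact the $L^\infty$--$L^2$ sup estimate for harmonic functions (this is how it is used in Lemma \ref{l2.8}), not an oscillation decay; the correct input from \cite{bm} is the Harnack inequality, from which the H\"older oscillation decay you state follows by the standard argument.
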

\begin{proof}
Let $M_2=\sup_{B_{2R}(y_0)}u$, $m_2=\inf_{B_{2R}(y_0)}u$,
$M_1=\sup_{B_R(y_0)}u$ and $m_1=\inf_{B_R(y_0)}u$. By Lemma \ref{l2.6},
there exists $\wz u\in H^{1,2}_{0}(B_{2R}(y_0))$ such that $\Delta \wz u=g$
in $B_{2R}(y_0)$.

Let $M_R=\|\wz u\|_{L^\fz(B_{2R}(y_0))}$.
Applying \cite[Theorem 1.1]{bm} to $M_2+M_R-(u-\wz u)$
and  $(u-\wz u)-m_2+M_R$ respectively,
we obtain that
\begin{eqnarray*}
M_2-m_1\le\sup_{B_R(y_0)}M_2+M_R-(u-\wz u)\le C_3\inf_{B_R(y_0)} \lf[M_2+M_R-(u-\wz u)\r]\le
C_3\lf[M_2-M_1+2M_R\r],
\end{eqnarray*}
\begin{eqnarray*}
M_1-m_2\le\sup_{B_R(y_0)}(u-\wz u)-m_2+M_R\le C_3\inf_{B_R(y_0)} \lf[(u-\wz u)-m_2+M_R\r]\le
C_3\lf[m_1-m_2+2M_R\r].
\end{eqnarray*}
Adding the last two inequalities, we deduce that
$$(C_3+1)(M_1-m_1)\le (C_3-1)(M_2-m_2)+4C_3M_R.$$
By Lemma \ref{l2.4}, we conclude that for each $p\in (\frac Q2,\fz]\cap (1,\fz]$,
\begin{eqnarray*}
{\mathrm {osc}}({u,{B_R(y_0)}})\le \frac{C_3-1}{C_3+1} {\mathrm {osc}}({u,{B_{2R}(y_0)}})
+CR^{2-Q/p}\|g\|_{L^p(B_{2R}(y_0))},
\end{eqnarray*}
which together with a standard iteration as in \cite[p.201]{gt01} and Lemma \ref{l2.8}
yields the desired estimate.
\end{proof}

By Lemma \ref{l2.8}, similarly to the proof of \cite[Lemma 2.2]{ji}, we have
the following Caccioppoli inequality.
\begin{lem}\label{l2.9}
Let $Q\in (1,\fz)$ and $p\in(\frac Q2,\fz]\cap (1,\fz]$.
Then there exists a positive constant $C$ such that
for all $u\in H^{1,2}_{\loc}(X)$ and $g\in L^p_\loc(X)$ that satisfy $\Delta u=g$ in $B_R(y_0)$,
where $r<R<R_0$,
\begin{equation*}
\||Du|\|_{L^2(B_r(y_0))}\le
CR^{1+Q(\frac 12-\frac 1p)}\|g\|_{L^p(B_{R}(y_0))}+\frac{C}{(R-r)}\|u\|_{L^2(B_{R}(y_0))}.
\end{equation*}
\end{lem}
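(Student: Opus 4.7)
The plan is to carry out the standard Caccioppoli test-function argument, using the Sobolev and Moser--Trudinger inequalities of Section~2.4 to convert an $L^{p'}$ norm of the cut-off of $u$ into an $L^2$ norm of its Cheeger derivative.

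First choose a Lipschitz cutoff $\eta$ with $0\le\eta\le 1$, $\eta\equiv 1$ on $B_r(y_0)$, $\supp\eta\subset B_R(y_0)$, and $|D\eta|\le 2/(R-r)$. Testing the weak formulation \eqref{1.2} against $\phi=\eta^2 u\in H^{1,2}_0(B_R(y_0))$ and using the Leibniz rule $D(\eta^2 u)=\eta^2 Du+2\eta u\,D\eta$ gives
\begin{equation*}
\int_{B_R(y_0)}\eta^2|Du|^2\,d\mu=-2\int_{B_R(y_0)}\eta u\,Du\cdot D\eta\,d\mu-\int_{B_R(y_0)}g\,\eta^2 u\,d\mu.
\end{equation*}
Applying Cauchy--Schwarz and Young to the first term on the right absorbs $\tfrac12\int\eta^2|Du|^2$ into the left side and leaves a contribution bounded by $C(R-r)^{-2}\|u\|_{L^2(B_R(y_0))}^2$.

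The main step is to bound the remaining integral by H\"older as $\|g\|_{L^p(B_R)}\|u\eta\|_{L^{p'}(B_R)}$, with $p'=p/(p-1)$, and then to estimate $\|u\eta\|_{L^{p'}(B_R)}$ via the appropriate embedding for $u\eta\in H^{1,2}_0(B_R(y_0))$. When $Q>2$, the hypothesis $p>\tfrac Q2$ is exactly equivalent to $p'\le 2^\ast$, so \eqref{x2.5} combined with H\"older yields
\begin{equation*}
\|u\eta\|_{L^{p'}(B_R)}\le C\mu(B_R)^{1/p'-1/2^\ast}\|D(u\eta)\|_{L^2(B_R)}\le CR^{1+Q(1/2-1/p)}\|D(u\eta)\|_{L^2(B_R)}.
\end{equation*}
When $Q=2$, the Moser--Trudinger inequality \eqref{x2.6} supplies $L^{p'}$ control for every finite $p'$, and when $Q\in(1,2)$ the $L^\infty$-embedding \eqref{x2.7} combined with $\mu(B_R)^{1/p'}$ handles all $p>1$. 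A direct computation using $\mu(B_R)\sim R^Q$ shows that in each case the resulting power of $R$ equals $R^{1+Q(1/2-1/p)}$.

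Expanding $\|D(u\eta)\|_{L^2(B_R)}\le\|\eta Du\|_{L^2(B_R)}+C(R-r)^{-1}\|u\|_{L^2(B_R)}$ and applying Young's inequality once more with a small parameter $\epsilon$ absorbs the term involving $\|\eta Du\|_{L^2(B_R)}$ back into the left side, producing
\begin{equation*}
\int_{B_r(y_0)}|Du|^2\,d\mu\le\frac{C}{(R-r)^2}\|u\|_{L^2(B_R(y_0))}^2+CR^{2+Q(1-2/p)}\|g\|_{L^p(B_R(y_0))}^2,
\end{equation*}
and the claimed estimate follows on taking square roots. The main obstacle is verifying that the exponent $R^{1+Q(1/2-1/p)}$ emerges identically in each of the three regimes of $Q$; this is where $p>\tfrac Q2$ is used sharply in the case $Q>2$ to keep $p'$ at or below the Sobolev threshold $2^\ast$. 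Lemma~\ref{l2.8} provides the natural alternative: one may bound $\|u\eta\|_{L^{p'}(B_R)}\le\mu(B_R)^{1/p'}\|u\|_{L^\infty(\supp\eta)}$ and invoke the $L^\infty$ estimate on intermediate balls, which is presumably the route taken in \cite[Lemma 2.2]{ji}, although that path requires an additional Young-type absorption in order to recover the clean $(R-r)^{-1}$ scaling on the cutoff term.
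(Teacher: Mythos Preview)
Your argument is correct. One small quibble: the claim that ``$p>\tfrac Q2$ is exactly equivalent to $p'\le 2^\ast$'' is not quite right---the equivalence is actually $p\ge 2_\ast\Leftrightarrow p'\le 2^\ast$---but since $Q/2>2_\ast$ when $Q>2$, you only need (and only use) the forward implication $p>Q/2\Rightarrow p'<2^\ast$, so the proof stands.

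Your route differs from the paper's. The paper does not spell out a proof but indicates that it follows from Lemma~\ref{l2.8} (the interior $L^\infty$ bound for solutions) as in \cite[Lemma 2.2]{ji}; that path bounds $\int g\,\eta^2 u$ via $\|u\|_{L^\infty}$ on an intermediate ball, which in turn rests on the Harnack/Moser machinery behind Lemmas~\ref{l2.4} and~\ref{l2.6}. Your approach is more self-contained: you never invoke the $L^\infty$ estimate, instead applying the Sobolev--Poincar\'e inequalities \eqref{x2.5}--\eqref{x2.7} directly to the compactly supported function $u\eta$, and then closing by Young's inequality. This has the advantage of being logically independent of Lemma~\ref{l2.8} and of making the role of the threshold $p>Q/2$ transparent (it is exactly what keeps $p'$ below the Sobolev exponent when $Q>2$). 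The paper's route, on the other hand, packages the case analysis into a single $L^\infty$ bound and is perhaps cleaner once that bound is available.
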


\section{Poisson equation}
\hskip\parindent
Let $B=B_R(y_0)\subset\Omega$ satisfy $8B\subset\subset\Omega$.
Let  $\psi$ be a Lipschitz function such that
$\psi=1$ on $B_{2R}(y_0)$, $\supp \psi\subset B_{4R}(y_0)$ and
$|D\psi|\le\frac {C_4}R$.
For all $x,x_0\in 8B$, set $w_{x_0}(t,x):=u\psi(x)-T_t(u\psi)(x_0)$.
Then $Dw_{x_0}(t,x_0)=D(u\psi)(x_0)=Du(x_0)$ for every $x_0\in B_{2R}(y_0)$.

The following functional is the main tool for us; see \cite{ck,krs,ji}.
Let $x_0\in B=B_R(y_0)$. For all $t\in (0,R^2)$, define
\begin{eqnarray}\label{x3.1}
J(t):=&&\frac{1}{t}
\bigg\{\int_0^t\int_X |Dw_{x_0}(s,x)|^2 p(s,x_0,x)\,d\mu(x)\,ds\nonumber\\
&&+\int_0^t\int_X w_{x_0}(s,x)\psi(x)Au(x)p(s,x_0,x)\,d\mu(x)\,ds\bigg\}.
\end{eqnarray}

The main aim of this section is to prove the following estimate.
\begin{thm}\label{t3.1}
Let $Q\in (1,\fz)$, $p\in (\frac Q2,\fz]\cap (1,\fz]$ and assume that
\eqref{1.1} and the curvature condition \eqref{1.3} hold.
Then there exists $C>0$ such that for all $u\in H^{1,2}_\loc(X)$
and $g\in L^p_\loc(X)$ that satisfy $\Delta u=g$ in $8B$,
where $B=B_R(y_0)$ with  $R<R_0/8$, and almost every $x_0\in B$,
\begin{eqnarray}\label{x3.2}
|Du(x_0)|^2&&\le C(1+ c_{\kz}(R^2)R^2)C(u,g)^2+
\int_0^{R^2}\frac{1}{t} \int_X \lf|w_{x_0}(t,x)\psi(x)g(x)\r|p(t,x_0,x)\,d\mu(x)\,dt,
\end{eqnarray}
where $C(u,g)=R^{-Q/2-1}\|u\|_{L^2(8B)}+R^{1-Q/p}\|g\|_{L^p(8B)}$.
\end{thm}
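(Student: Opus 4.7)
The approach, following the semigroup method of \cite{ck,krs,ji}, exploits the representation $|Du(x_0)|^2 = \lim_{t\to 0^+}J(t)$ valid for a.e.\ $x_0\in B$, and then propagates control of $J$ from scale $t=R^2$ down to $t=0$. The limit identity holds because, for $x_0$ a Lebesgue point with $\psi\equiv 1$ on $B_{2R}(y_0)$, the first integrand $T_s(|D(u\psi)|^2)(x_0)$ converges to $|Du(x_0)|^2$ by Lebesgue differentiation, while the second integrand $\int_X w_{x_0}\psi\,Au\cdot p\,d\mu$ vanishes by Cauchy--Schwarz, using $\int w_{x_0}^2 p\,d\mu\to 0$ and $Au=g\in L^p_{\loc}$.

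The first main step is to rewrite $tJ(t)$ via integration by parts against the heat kernel. Using the symmetry of the Dirichlet form, the Leibniz rule $A(u\psi)=\psi Au + uA\psi + 2Du\cdot D\psi$, the identity $w_{x_0}Dw_{x_0}=\tfrac{1}{2}D(w_{x_0}^2)$, $Ap=\partial_s p$, and $\int w_{x_0}\partial_s w_{x_0}\cdot p\,d\mu=0$ (since $\partial_s w_{x_0}$ is constant in $y$ and $\int w_{x_0}p\,d\mu=0$), I will derive
\[
tJ(t) = \tfrac{1}{2}F(t) - \int_0^t\!\!\int_X w_{x_0}(uA\psi + 2Du\cdot D\psi)\,p\,d\mu\,ds,
\]
where $F(t):=\int_X w_{x_0}(t,\cdot)^2 p(t,x_0,\cdot)\,d\mu$ is the $p$-weighted variance of $u\psi$.

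Setting $\phi(t):=F(t)/(2t)$, I then have $|Du(x_0)|^2=\phi(0^+)=\phi(R^2)+\int_0^{R^2}(F(t)-tF'(t))/(2t^2)\,dt$. For the boundary value, the curvature condition \eqref{1.3} gives $\phi(R^2)\le(1+c_\kz(R^2)R^2/2)\,G(R^2)$ with $G(t):=T_t(|D(u\psi)|^2)(x_0)$; combined with the Gaussian upper bound \eqref{x2.1}, a dyadic decomposition around $x_0$, and Caccioppoli (Lemma \ref{l2.9}), this yields $G(R^2)\le CC(u,g)^2$. For the integrand, computing $F'(t)=2G(t)+2\int w_{x_0}Af\cdot p\,d\mu$ with $Af=\psi g+(uA\psi+2Du\cdot D\psi)$ and invoking \eqref{1.3} once more, I will obtain
\[
(F-tF')/(2t^2) \;\le\; c_\kz(R^2)G(t)/2 \;-\; H(t)/t \;-\; H_{\text{bdy}}(t)/t,
\]
where $H(t):=\int w_{x_0}\psi g\,p\,d\mu$ and $H_{\text{bdy}}(t):=\int w_{x_0}(uA\psi+2Du\cdot D\psi)\,p\,d\mu$. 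The bound $|H(t)|\le\int|w_{x_0}\psi g|\,p\,d\mu$ then accounts exactly for the dynamic source term in \eqref{x3.2}.

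The two main obstacles will be the remaining static contributions. First, $\int_0^{R^2}|H_{\text{bdy}}(t)|/t\,dt$ has integrand supported on $\supp D\psi\subset B_{4R}\setminus B_{2R}$, at distance $\ge R$ from $x_0\in B$; the Gaussian upper bound $p(t,x_0,y)\le CR^{-Q}e^{-cd(x_0,y)^2/t}$ supplies the needed decay, and Lemma \ref{l2.8} (for the $uA\psi$ factor) together with Caccioppoli (for $Du\cdot D\psi$) reduces this to $CC(u,g)^2$. Second, and harder, the curvature correction $\tfrac{c_\kz}{2}\int_0^{R^2}G(t)\,dt$, which by Fubini equals $\tfrac{c_\kz}{2}\int_X|Du|^2(y)\bigl(\int_0^{R^2}p(t,x_0,y)\,dt\bigr)d\mu(y)$; controlling the truncated Green function $\int_0^{R^2}p\,dt\le C\min\{d(x_0,y)^{2-Q},R^{2-Q}\}$ for $Q>2$ (with the analogous logarithmic/bounded estimates for $Q=2$ and $Q<2$), and using a dyadic decomposition of annuli around $x_0$ combined with Caccioppoli at each scale, the hypothesis $p>Q/2$ ensures the resulting geometric series converges and contributes at most $c_\kz(R^2)R^2\cdot CC(u,g)^2$. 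Assembling all of these pieces delivers \eqref{x3.2}.
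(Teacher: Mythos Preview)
Your overall architecture is sound and tracks the paper's semigroup scheme: identify $|Du(x_0)|^2$ as a limit at $t=0^+$, control the value at $t=R^2$, and propagate via a differential inequality in which the curvature condition \eqref{1.3} produces the correction term. Your formula $F'(t)=2G(t)+2H(t)+2H_{\mathrm{bdy}}(t)$ and the resulting inequality $(F-tF')/(2t^2)\le \tfrac{c_\kappa}{2}G(t)-t^{-1}(H+H_{\mathrm{bdy}})$ are correct, and your treatment of $H_{\mathrm{bdy}}$ via Gaussian decay on the annulus $4B\setminus 2B$ is in line with the paper's Proposition~\ref{p3.2}.

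The gap is in your control of the curvature correction $\frac{c_\kappa}{2}\int_0^{R^2}G(t)\,dt=\frac{c_\kappa}{2}\int_X|D(u\psi)|^2(y)\bigl(\int_0^{R^2}p(t,x_0,y)\,dt\bigr)\,d\mu(y)$. The dyadic Caccioppoli estimate at scale $r=2^{-j}R$ (Lemma~\ref{l2.9}) produces two terms, and only the $g$-term yields a convergent geometric series under $p>Q/2$: with the truncated Green kernel $\sim r^{2-Q}$, the contribution $r^{2-Q}\cdot r^{-2}\|u\|_{L^2(B(x_0,2r))}^2$ behaves like $r^{-Q}\|u\|_{L^2(8B)}^2$ and diverges as $j\to\infty$. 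Even replacing $\|u\|_{L^2}$ by $r^{Q/2}\|u\|_{L^\infty}$ via Lemma~\ref{l2.8} leaves a constant (in $j$) summand. To salvage this you must subtract $u(x_0)$ and invoke the H\"older continuity of $u$ (Lemma~\ref{l2.7}), so that $\|u-u(x_0)\|_{L^2(B(x_0,2r))}\le C\,C(u,g)R^{1-\gamma}r^{\gamma+Q/2}$; then the $u$-term contributes $C\,C(u,g)^2R^{2-2\gamma}r^{2\gamma}$, which does sum to $C\,C(u,g)^2R^2$. Your sketch attributes convergence to $p>Q/2$ alone, which is not enough.

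The paper sidesteps this difficulty by a small but decisive trick. Rather than integrating $G(t)$, it uses a two-case dichotomy: either $G(t)\ge F(t)/(2t)$ (and the bracket $G-\tfrac{F}{2t}$ is harmlessly nonnegative), or $G(t)<F(t)/(2t)$, in which case the curvature bound $-c_\kappa G(t)$ obtained from \eqref{1.3} is immediately upgraded to $-c_\kappa F(t)/(2t)$. One is then left with $\int_0^{R^2}\frac{F(t)}{t}\,dt$, and this is handled directly by Proposition~\ref{p3.3}: the H\"older continuity of $w_{x_0}(t,x)$ at $(0,x_0)$ (Lemma~\ref{l3.1}) gives $F(t)\le C\,C(u,g)^2R^{2-2\gamma}t^{\gamma}$ pointwise in $t$, so $F(t)/t$ is integrable near $0$. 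This avoids any dyadic energy decomposition for the curvature term.
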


\begin{rem}\rm
In this paper, the curvature condition \eqref{1.3} is only employed once,
in the proof of Theorem \ref{t3.1}; see the proof at the end of this section.
\end{rem}

Notice that $w_{x_0}(0,x_0)=0.$ We use the H\"older continuity of $u$ to
obtain the H\"older continuity of $w_{x_0}(t,x)$ at $(0,x_0)$.

\begin{lem}\label{l3.1}
Let $Q\in (1,\fz)$ and $p\in (\frac Q2,\fz]\cap (1,\fz]$.
Then there exists $C>0$ such that for all
$u\in H^{1,2}_\loc(X)$ and $g\in L^p_\loc(X)$ that satisfy $\Delta u=g$ in $8B$,
where $B=B_R(y_0)$ with $R<R_0/8$, and
almost all $x_0\in B$, $x\in 2B$ and all $t\in (0,R^2)$,
$$|w_{x_0}(t,x)|=|u\psi(x)-T_t(u\psi)(x_0)|\le CC(u,g)R^{1-\gz}(d(x,x_0)^\gz+t^{\gz/2}),$$
where $C(u,g)=R^{-Q/2-1}\|u\|_{L^2(8B)}
+R^{1-Q/p}\|g\|_{L^p(8B)}$ and $\gz\in (0,1)$ is
as in Lemma \ref{l2.7}.
\end{lem}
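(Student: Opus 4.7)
The plan is to bound $|w_{x_0}(t,x)| = |u\psi(x) - T_t(u\psi)(x_0)|$ by inserting the intermediate value $u\psi(x_0)$:
\begin{equation*}
|u\psi(x) - T_t(u\psi)(x_0)| \le |u\psi(x) - u\psi(x_0)| + |u\psi(x_0) - T_t(u\psi)(x_0)|,
\end{equation*}
and estimate the two terms by a spatial H\"older bound and a heat-semigroup smoothing bound, respectively.

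For the spatial term, I would apply Lemma \ref{l2.7} to $u$ on the ball $B_{2R}(y_0) = 2B$: since $\Delta u = g$ in $8B = 4(2B)$ and $2R < R_0/4$, the lemma yields, for $x,x_0 \in 2B$,
\begin{equation*}
|u(x)-u(x_0)| \le C\bigl[(2R)^{-Q/2}\|u\|_{L^2(8B)} + (2R)^{2-Q/p}\|g\|_{L^p(8B)}\bigr] (d(x,x_0)/(2R))^\gz \lesssim C(u,g)\, R^{1-\gz} d(x,x_0)^\gz.
\end{equation*}
Combining with the Lipschitz estimate $|\psi(x)-\psi(x_0)| \le C d(x,x_0)/R$ and the $L^\fz$ bound $\|u\|_{L^\fz(4B)} \lesssim R\,C(u,g)$ from Lemma \ref{l2.8} (applied on $4B$, using that $\Delta u=g$ in $8B$ and $4R<R_0/2$), the Leibniz-type splitting gives
\begin{equation*}
|u\psi(x)-u\psi(x_0)| \lesssim C(u,g)\, d(x,x_0) + C(u,g)\, R^{1-\gz} d(x,x_0)^\gz \lesssim C(u,g)\, R^{1-\gz}d(x,x_0)^\gz,
\end{equation*}
where the last inequality uses $d(x,x_0)\le 4R$ on $2B$ to absorb the linear term into the H\"older term.

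For the semigroup term, I would use the stochastic completeness $T_t 1 = 1$ from \eqref{x2.2} to write
\begin{equation*}
u\psi(x_0)-T_t(u\psi)(x_0) = \int_X [u\psi(x_0)-u\psi(y)]\, p(t,x_0,y)\,d\mu(y),
\end{equation*}
and split the integral into $y\in 2B$ and $y\in X\setminus 2B$. On $2B$ the previous H\"older estimate applies and the contribution is controlled by $C(u,g)R^{1-\gz}\int_X d(y,x_0)^\gz p(t,x_0,y)\,d\mu(y)$; standard moment estimates for Gaussian-bounded heat kernels (via \eqref{x2.1} and Ahlfors regularity) give this integral $\lesssim t^{\gz/2}$. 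On $X\setminus 2B$ we have $d(y,x_0)\ge R$ (since $x_0\in B$), and both $u\psi(x_0)$ and $u\psi(y)$ are bounded by $\|u\|_{L^\fz(4B)} \lesssim R\,C(u,g)$ (with $u\psi(y)=0$ outside $4B$). The Gaussian upper bound from \eqref{x2.1}, summed over dyadic annuli $2^kR\le d(y,x_0)<2^{k+1}R$, yields $\int_{X\setminus 2B} p(t,x_0,y)\,d\mu(y) \lesssim e^{-cR^2/t}$, and for $t\in(0,R^2)$ one has $e^{-cR^2/t}\le C(t/R^2)^{\gz/2}$, so this piece also contributes $\lesssim C(u,g) R^{1-\gz} t^{\gz/2}$.

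Adding the two pieces gives the claimed estimate. The main technical point to watch is that the H\"older estimate only applies on balls strictly inside the PDE domain, so one must apply Lemma \ref{l2.7} on $2B$ (not on $4B$) and compensate on the complementary region by the Gaussian off-diagonal decay of $p$; the radii $8B$ and $R<R_0/8$ in the hypotheses are exactly what make this splitting work.
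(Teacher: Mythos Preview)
Your proposal is correct and follows essentially the same route as the paper's proof: insert $u\psi(x_0)$, bound the spatial increment by Lemma~\ref{l2.7}, and bound the semigroup term by writing it as $\int_X[u\psi(x_0)-u\psi(y)]p(t,x_0,y)\,d\mu(y)$ and splitting into $y\in 2B$ (H\"older plus Gaussian moment) and $y\notin 2B$ (the $L^\infty$ bound from Lemma~\ref{l2.8} plus Gaussian tail). One small simplification you missed: since $\psi\equiv 1$ on $2B$ and both $x_0\in B$ and $x\in 2B$, the Leibniz splitting for the spatial term is unnecessary---you have $u\psi(x)-u\psi(x_0)=u(x)-u(x_0)$ directly, which is how the paper handles it.
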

\begin{proof}
In the following proof, we will repeatedly use the fact
that for fixed $\bz,\dz\in (0,\fz)$, $t^{\bz}e^{-t^\dz}$ and $t^{-\bz}e^{-t^{-\dz}}$
are bounded on $(0,\fz)$.

By Lemma \ref{l2.7}, we see that for almost all $x_0,x\in 2B$,
$$|u(x)-u(x_0)|\le CRC(u,g)\lf(\frac{d(x,x_0)}{R}\r)^\gz ,$$
where $C$ and $\gz$ are independent of $u,g$ and $B$.
Thus for almost all
$x_0\in B$, $x\in 2B$ and all $t\in (0,R^2)$, by Lemma \ref{l2.8}, we have
\begin{eqnarray*}
&&|w_{x_0}(t,x)|=|u(x)\psi(x)-T_t(u\psi)(x_0)|\nonumber\\
&&\hs=|u(x)\psi(x)-u(x_0)\psi(x_0)+u(x_0)\psi(x_0)-T_t(u\psi)(x_0)|\nonumber\\
&&\hs \le CC(u,g)R^{1-\gz}d(x,x_0)^\gz+\int_{2B} |u(x_0)\psi(x_0)-u(y)\psi(y)|p(t,x_0,y)\,d\mu(y)\nonumber\\
&&\hs\hs+\int_{X\setminus 2B} |u(x_0)\psi(x_0)-u(y)\psi(y)|p(t,x_0,y)\,d\mu(y)\nonumber\\
&&\hs \le CC(u,g)R^{1-\gz}d(x,x_0)^\gz+CC(u,g)R^{1-\gz}\int_{2B} d(y,x_0)^\gz t^{-\frac Q2}
e^{-\frac{d(y,x_0)^2}{2C_1t}} e^{-\frac{d(y,x_0)^2}{2C_1t}}\,d\mu(y)\nonumber\\
&&\hs\hs+e^{-cR^2/t}\|u\|_{L^\fz(4B)}\int_{X\setminus 2B}t^{-\frac Q2}
 e^{-\frac{d(y,x_0)^2}{2C_1t}}\,d\mu(y)\nonumber \\
&&\hs \le CRC(u,g)\lf[R^{-\gz}(d(x,x_0)^\gz+t^{\gz/2})+e^{{-cR^2/t}}\r]
\int_X  (lt)^{-\frac Q2}e^{-\frac{d(y,x_0)^2}{C_2 (lt)}}\,d\mu(y)\nonumber\\
&&\hs \le CRC(u,g)\lf[R^{-\gz}(d(x,x_0)^\gz+t^{\gz/2})\r]\int_X  p(lt,x_0,x)\,d\mu(x)\nonumber\\
&&\hs\le CRC(u,g)\lf[R^{-\gz}(d(x,x_0)^\gz+t^{\gz/2})\r],
\end{eqnarray*}
where $l=\frac{2C_1}{C_2}$, as desired.
\end{proof}

The following result shows the motivation for using the functional $J$.

\begin{prop}\label{p3.1}
Let $Q\in (1,\fz)$, $p\in (\frac Q2,\fz]\cap (1,\fz]$ and $B=B_R(y_0)$ with $R<R_0/8$.
Suppose that $u\in H^{1,2}_\loc(X)$ and $g\in L^p_\loc(X)$ satisfy $\Delta u=g$ in $8B$.
Then, for almost every $x_0\in B$,
$\lim_{t\to 0^+}J(t)=|Du(x_0)|^2.$
\end{prop}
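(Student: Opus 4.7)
The plan is to split $J(t)=J_1(t)+J_2(t)$ according to the two integrals in \eqref{x3.1} and show $J_1(t)\to|Du(x_0)|^2$ and $J_2(t)\to 0$ for a.e.\ $x_0\in B$ as $t\to 0^+$. Two preparatory observations: since $T_s(u\psi)(x_0)$ is constant in $x$, the Leibniz rule gives $Dw_{x_0}(s,\cdot)=D(u\psi)=uD\psi+\psi Du$; and since $\psi\equiv 1$ on $B_{2R}(y_0)\supset B$, the derivative $D\psi$ vanishes a.e.\ there, hence $D(u\psi)=Du$ a.e.\ on $B$. Moreover, $\supp\psi\subset 4B\subset 8B$ together with $\Delta u=g$ in $8B$ (in the sense of \eqref{1.2}) lets one identify $\psi(x)Au(x)$ pointwise with $\psi(x)g(x)$, so
$$J_2(t)=\frac{1}{t}\int_0^t\int_X w_{x_0}(s,x)\psi(x)g(x)p(s,x_0,x)\,d\mu(x)\,ds.$$

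For $J_1$, the inner integral equals $T_s(|D(u\psi)|^2)(x_0)$ and $|D(u\psi)|^2\in L^1(X)$ (compactly supported in $4B$). The Gaussian bound \eqref{x2.1} together with the stochastic completeness \eqref{x2.2} implies that $\{T_s\}_{s>0}$ acts as an approximate identity at Lebesgue points: $T_sf(x_0)\to f(x_0)$ as $s\to 0^+$ for a.e.\ $x_0$ and every $f\in L^1_\loc(X)$. The standard proof combines the pointwise maximal estimate $\sup_{s>0}|T_sf(x)|\le CMf(x)$ (a direct consequence of \eqref{x2.1} and local $Q$-regularity) with density of bounded continuous functions. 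Applied to $f=|D(u\psi)|^2$, this gives $T_s(|D(u\psi)|^2)(x_0)\to|Du(x_0)|^2$ for a.e.\ $x_0\in B$, and Ces\`aro time-averaging preserves this limit, yielding $J_1(t)\to|Du(x_0)|^2$.

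For $J_2$, the inner integrand is supported in $4B$, so I split it as $\int_{2B}+\int_{4B\setminus 2B}$. On $2B$, Lemma \ref{l3.1} provides $|w_{x_0}(s,x)|\le CC(u,g)R^{1-\gz}(d(x,x_0)^\gz+s^{\gz/2})$. Combining the elementary inequality $r^\gz e^{-r^2/(C_1 s)}\le Cs^{\gz/2}e^{-r^2/(2C_1 s)}$ with \eqref{x2.1} and the standard maximal-type bound $\int_X|g(y)|s^{-Q/2}e^{-d(y,x_0)^2/(2C_1 s)}\,d\mu(y)\le CMg(x_0)$ bounds the $2B$-contribution by $CR^{1-\gz}C(u,g)s^{\gz/2}Mg(x_0)$. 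On $4B\setminus 2B$, $d(x,x_0)\ge R$ forces $p(s,x_0,x)\le Cs^{-Q/2}e^{-R^2/(C_1 s)}$; using the crude bound $|w_{x_0}(s,x)|\le CRC(u,g)$ from Lemma \ref{l2.8} and H\"older's inequality $\|g\|_{L^1(4B)}\le CR^{Q(1-1/p)}\|g\|_{L^p(8B)}$, this contribution is exponentially small in $R^2/s$ and remains so after time-averaging. Altogether
$$|J_2(t)|\le CR^{1-\gz}C(u,g)Mg(x_0)\,t^{\gz/2}+o(1)\to 0\quad\text{as}\ t\to 0^+,$$
at every $x_0\in B$ with $Mg(x_0)<\fz$, hence a.e.\ (since $g\in L^p_\loc$ with $p>1$).

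The main obstacle is the second integral: one must first give rigorous meaning to $\psi\,Au$ when $u\in H^{1,2}_\loc(X)$ may not lie in $D(A)$ globally (the identity $\Delta u=g$ on $8B$ combined with $\supp\psi\subset 8B$ selects $\psi g$ as the correct pointwise realization), and then produce the decay factor $s^{\gz/2}$ by pairing the H\"older control of Lemma \ref{l3.1} with the Gaussian concentration of $p(s,x_0,\cdot)$ near $x_0$; the first integral is routine once the approximate-identity property of $\{T_s\}$ is in hand.
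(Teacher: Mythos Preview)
Your proposal is correct and takes essentially the same approach as the paper's proof: the same identification of the $Au$-term with $g$ via the weak equation on $8B$ (the paper justifies this by checking $w_{x_0}\psi\,p(s,x_0,\cdot)\in H^{1,2}_0(4B)$, using Lemma~\ref{l2.2}), the same split of the $J_2$-integral into $2B$ and $4B\setminus 2B$, and the same use of Lemma~\ref{l3.1} to extract the decay factor $s^{\gz/2}$. Your replacement of the paper's step $s^{\gz/2}T_{ls}(|g|)(x_0)\to 0$ by the uniform maximal bound $\int|g|\,s^{-Q/2}e^{-d(\cdot,x_0)^2/cs}\,d\mu\le C\cm g(x_0)$ is a minor (and slightly cleaner) variation.
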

\begin{proof}
By Lemma \ref{l2.2}, for almost every $x_0\in B$,
$D_yp(s,x_0,\cdot)\in L^2(X)$. From this together with the fact that
for almost every $s$, $w_{x_0}(s,\cdot), p(s,x_0,\cdot)$ are bounded functions and belong
in $H^{1,2}_{\loc}(X)$, $\supp \psi\subset 4B$, we see that $w_{x_0} \psi p\in H^{1,2}_0(B(y_0,4R))$.
Thus, we conclude that
\begin{eqnarray}\label{x3.3}
\quad \int_0^t\int_X w_{x_0}(s,x)\psi(x)Au(x)p(s,x_0,x)\,d\mu(x)
=\int_0^t\int_{4B} w_{x_0}(s,x)\psi(x)g(x)p(s,x_0,x)\,d\mu(x).
\end{eqnarray}
By Lemma \ref{l3.1}, $|w_{x_0}(s,x)|\le CC(u,g)R^{1-\gz}(d(x_0,x)^\gz+s^{\gz/2})$ for some $\gz\in (0,1)$
and almost every $x\in 2B$. This further implies that
\begin{eqnarray*}
&&\lf|\int_0^t\int_X w_{x_0}(s,x)p(s,x_0,x)\psi(x)Au(x)\,d\mu(x)\,ds\r|\nonumber\\
&&\hs\le CC(u,g)R^{1-\gz}\int_0^t\int_{2B}
(d(x,x_0)^\gz+s^{\gz/2})s^{-\frac Q2}e^{-\frac{d(x,x_0)^2}{C_1s}}
\lf|g(x)\r|\,d\mu(x)\,ds\nonumber\\
&&\hs\hs+C\|u\|_{L^\fz(4B)}\int_0^t\int_{4B\setminus 2B}
s^{-\frac Q2}e^{-cR^2/s}
\lf|g(x)\r|\,d\mu(x)\,ds\nonumber\\
&&\hs\le CC(u,g)R^{1-\gz}\int_0^t s^{\gz/2}\int_{X}
s^{-\frac Q2}e^{-\frac{d(x,x_0)^2}{2C_1s}}
\lf|g(x)\r|\,d\mu(x)\,ds+Ct^2\|u\|_{L^\fz(4B)}R^{-Q-2}\|g\|_{L^1(4B)}\\
&&\hs\le CC(u,g)R^{1-\gz}\int_0^t s^{\gz/2}T_{ls}(|g|)(x_0)\,ds+
Ct^2\|u\|_{L^\fz(4B)}R^{-Q-2}\|g\|_{L^1(4B)},
\end{eqnarray*}
where $l=\frac{C_1}{2C_2}$. By the fact that $T_t-I\to 0$ in the strong
operator topology as $t\to 0$, we obtain
\begin{eqnarray}\label{x3.4}
&&\lim_{t\to 0^+}\lf|\frac{1}{t}
\int_0^t\int_X w_{x_0}(s,x)p(s,x_0,x)\psi(x)Au(x)\,d\mu(x)\,ds\r|\nonumber\\
&&\hs\le \lim_{t\to 0^+} \lf\{CC(u,g)R^{1-\gz}\frac 1t\int_0^t s^{\gz/2}T_{ls}(|g|)(x_0)\,ds+
Ct\|u\|_{L^\fz(4B)}R^{-Q-2}\|g\|_{L^1(4B)}\r\}\nonumber\\
&&\hs=CC(u,g)R^{1-\gz}\lim_{s\to 0^+}s^{\gz/2}T_{ls}(|g|)(x_0)=0,
\end{eqnarray}
for almost every $x_0\in B_R(y_0)$, which implies that
\begin{eqnarray*}
\lim_{t\to 0^+}J(t)&&=\lim_{s\to 0^+} T_s(|D(u\psi)|^2)(x_0)=|Du(x_0)|^2
\end{eqnarray*}
for almost every $x_0\in B_R(y_0)$, proving the proposition.
\end{proof}

By Lemma \ref{l3.1}, similarly to \cite[(24)]{krs} and \cite[(3.5)]{ji},
we deduce the following equality. We omit the details.
\begin{lem}\label{l3.2}
Let $Q\in (1,\fz)$, $p\in (\frac Q2,\fz]\cap (1,\fz]$ and $B=B_R(y_0)$ with $R<R_0/8$.
Suppose that $u\in H^{1,2}_\loc(X)$ and $g\in L^p_\loc(X)$ that satisfy $\Delta u=g$ in $8B$.
Then for almost every $x\in B$ and all $t\in (0,R^2)$,
\begin{eqnarray*}
&&\int_0^t\int_X \lf(A+\frac{\pa}{\pa s}\r)
w_{x_0}^2(s,x)p(s,x_0,x)\,d\mu(x)\,ds=\int_X w_{x_0}^2(t,x)p(t,x_0,x)\,d\mu(x).
\end{eqnarray*}
\end{lem}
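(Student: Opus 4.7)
My approach is to recognize the stated identity as the fundamental theorem of calculus applied to
$$F(s) := \int_X w_{x_0}^2(s,x)\,p(s,x_0,x)\,d\mu(x), \qquad s\in (0,R^2),$$
combined with the heat equation $\pa_s p(s,x_0,x)=A_x p(s,x_0,x)$ and the symmetry of the Dirichlet form $\mathscr{E}$. Concretely, I will show that $F(0^+)=0$ and that $F'(s)$ is exactly the integrand on the left-hand side; integrating from $0$ to $t$ then gives the claim.

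First, I would check that $F(0^+)=0$ for almost every $x_0\in B$. By Lemma \ref{l3.1}, $|w_{x_0}(s,x)|\le CC(u,g)R^{1-\gz}(d(x,x_0)^\gz+s^{\gz/2})$ for $x\in 2B$, so the Gaussian upper bound \eqref{x2.1} and the standard estimate $\int_X d(x,x_0)^{2\gz}p(s,x_0,x)\,d\mu\le Cs^\gz$ give $\int_{2B}w_{x_0}^2 p\,d\mu \to 0$ as $s\to 0^+$. Outside $2B$ the cutoff $\psi$ vanishes, so $w_{x_0}(s,x)=-T_s(u\psi)(x_0)$, which is bounded by $\|u\psi\|_{L^\fz}\le CC(u,g)R$ (Lemma \ref{l2.8}), while $\int_{X\setminus 2B}p(s,x_0,x)\,d\mu\le Ce^{-cR^2/s}\to 0$. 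Combining these yields $F(0^+)=0$.

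Second, for $s>0$ I would differentiate under the integral using the product rule, getting
$$F'(s)=\int_X (\pa_s w_{x_0}^2)(s,x)\,p(s,x_0,x)\,d\mu+\int_X w_{x_0}^2(s,x)\,\pa_s p(s,x_0,x)\,d\mu.$$
Since $p$ is a heat kernel, $\pa_s p(s,x_0,x)=A_x p(s,x_0,x)$, and the symmetry of $\mathscr{E}$ (equivalently, self-adjointness of $A$) lets me transfer $A$ onto $w_{x_0}^2$:
$$\int_X w_{x_0}^2\,\pa_s p\,d\mu=-\mathscr{E}(w_{x_0}^2,p(s,x_0,\cdot))=\int_X A w_{x_0}^2\cdot p(s,x_0,x)\,d\mu,$$
so that $F'(s)=\int_X (A+\pa_s)w_{x_0}^2\cdot p\,d\mu$. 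Integrating this identity from $0$ to $t$ and using $F(0^+)=0$ proves the lemma.

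The principal technical obstacle is assigning rigorous meaning to $A w_{x_0}^2$, because $u\psi$ need not lie in $D(A)$. To handle this I would write $a(s):=T_s(u\psi)(x_0)$ and expand $w_{x_0}^2(s,x)=(u\psi(x))^2-2a(s)u\psi(x)+a(s)^2$; the Leibniz formula of Section 2.2 gives $A((u\psi)^2)=2u\psi A(u\psi)+2|D(u\psi)|^2$, and the equation $\Delta u=g$ on $8B$ together with Lipschitzness of $\psi$ lets me interpret $A(u\psi)=\psi g+u A\psi+2Du\cdot D\psi$ as a measure supported in $4B$. The integrability of these objects against $p(s,x_0,\cdot)$ and the legality of differentiating through the integral at $s\to 0^+$ are ensured by the Gaussian bound \eqref{x2.1}, Lemma \ref{l2.8}, and (critically) the H\"older control near $(0,x_0)$ furnished by Lemma \ref{l3.1}; this is precisely the mechanism employed in \cite[(24)]{krs} and \cite[(3.5)]{ji}, which is why the authors are content to omit the details.
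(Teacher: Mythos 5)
Your proposal is correct and is exactly the argument the paper intends (the paper omits the proof, pointing to Lemma \ref{l3.1} and to \cite[(24)]{krs}): write $F(s)=\int_X w_{x_0}^2(s,x)p(s,x_0,x)\,d\mu(x)$, use $\pa_s p=A_xp$ plus self-adjointness of $A$ (in the extended, weak sense) to identify $F'(s)$ with the integrand, and use the H\"older control of Lemma \ref{l3.1} together with the Gaussian bound \eqref{x2.1} and $T_s1=1$ to get $F(0^+)=0$ and the integrability of $F'$ near $s=0$. One cosmetic slip: $\psi$ vanishes outside $4B$, not outside $2B$; on $4B\setminus 2B$ one instead uses $|u\psi|\le\|u\|_{L^\fz(4B)}$ together with $\int_{X\setminus 2B}p(s,x_0,x)\,d\mu\le Ce^{-cR^2/s}$, which gives the same conclusion.
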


We now begin to estimate the functional $J(t)$.
\begin{prop}\label{p3.2}
Let $Q\in (1,\fz)$ and $p\in (\frac Q2,\fz]\cap (1,\fz]$.
Then there exists $C>0$ such that for all
$u\in H^{1,2}_\loc(X)$ and $g\in L^p_\loc(X)$ that satisfy $\Delta u=g$ in $8B$,
where $B=B_R(y_0)$ with $R< R_0/8$, and almost every $x_0\in B$,
\begin{equation*}
J(R^2)\le C\lf(\frac{\|u\|_{L^2(8B)}}{R^{Q/2+1}}+\frac{\|g\|_{L^p(8B)}}{R^{Q/p-1}}\r)^2.
\end{equation*}
\end{prop}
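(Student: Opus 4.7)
The plan is to reformulate $J(R^2)$ via Lemma~\ref{l3.2} so that the Poisson source $g$ cancels out of $J$, leaving a heat-content term at time $R^2$ plus an error supported in the annulus $4B\setminus 2B$; each piece is then estimated via the Gaussian bound \eqref{x2.1}, the H\"older-type bound for $w_{x_0}$ from Lemma~\ref{l3.1}, and the Caccioppoli inequality of Lemma~\ref{l2.9}. Concretely, since $T_s(u\psi)(x_0)$ is constant in $x$, the Leibniz rule yields $A(w_{x_0}^2)=2w_{x_0}A(u\psi)+2|D(u\psi)|^2$, while $\partial_s(w_{x_0}^2)=-2w_{x_0}\,AT_s(u\psi)(x_0)$. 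By stochastic completeness \eqref{x2.2}, $\int_X w_{x_0}(s,x)p(s,x_0,x)\,d\mu(x)=0$, so the $\partial_s$-contribution vanishes upon pairing with $p$. Invoking Lemma~\ref{l3.2} and $|Dw_{x_0}|=|D(u\psi)|$ then gives
\begin{equation*}
\int_0^t\!\!\int_X|Dw_{x_0}|^2\,p\,d\mu\,ds
=\frac12\int_X w_{x_0}^2(t,x)\,p(t,x_0,x)\,d\mu
-\int_0^t\!\!\int_X w_{x_0}\,A(u\psi)\,p\,d\mu\,ds.
\end{equation*}
Decomposing $A(u\psi)=\psi g+uA\psi+2Du\cdot D\psi$ (weakly, via $\Delta u=g$ on $8B$ tested against $w_{x_0}\psi p\in H^{1,2}_0(8B)$), the $\psi g$ piece exactly cancels the second summand in the definition \eqref{x3.1} of $J$, leaving
\begin{equation*}
J(t)=\frac1{2t}\int_X w_{x_0}^2(t,x)\,p(t,x_0,x)\,d\mu
-\frac1t\int_0^t\!\!\int_{4B\setminus 2B}w_{x_0}[uA\psi+2Du\cdot D\psi]\,p\,d\mu\,ds,
\end{equation*}
where the restriction of support comes from $D\psi\equiv 0$ on $2B$.

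Setting $t=R^2$, the first term is handled by the splitting $|w_{x_0}(R^2,\cdot)|^2\le 2(u\psi)^2+2|T_{R^2}(u\psi)(x_0)|^2$, together with $p(R^2,x_0,\cdot)\le CR^{-Q}$ and Cauchy--Schwarz (yielding $|T_{R^2}(u\psi)(x_0)|^2\le CR^{-Q}\|u\|_{L^2(4B)}^2$), which gives a contribution bounded by $CR^{-Q-2}\|u\|_{L^2(8B)}^2$. For the error integral, $d(x,x_0)\ge R$ on the annular support, so $p(s,x_0,x)\le Cs^{-Q/2}e^{-cR^2/s}$ by \eqref{x2.1}; I then use Lemma~\ref{l3.1} (since $d(x,x_0)\le 5R$ and $s\le R^2$ imply $|w_{x_0}|\le CRC(u,g)$), $|D\psi|\le C/R$, Cauchy--Schwarz in $x$, and Lemma~\ref{l2.9} to replace $\|Du\|_{L^2(4B)}$ by $C(R^{-1}\|u\|_{L^2(8B)}+R^{1+Q(1/2-1/p)}\|g\|_{L^p(8B)})$. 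The resulting $s$-integrals of the form $\int_0^{R^2}s^{-\alpha}e^{-cR^2/s}\,ds$ are all finite (via the substitution $v=R^2/s$), and the powers of $R$ combine precisely to bound the error by $C\,C(u,g)^2$, giving the claim.

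The main obstacle is the $uA\psi$ contribution, since $\psi$ is merely Lipschitz and $A\psi$ is only a distribution; pointwise manipulation is illegal. This is resolved by passing to the Dirichlet form and writing $\int w_{x_0}uA\psi\cdot p\,d\mu=-\int D\psi\cdot D(w_{x_0}up)\,d\mu$, then expanding $D(w_{x_0}up)$ by the Leibniz rule into terms containing $D(u\psi)$, $Du$, and $D_yp$; the last is controlled by Lemma~\ref{l2.3} on $4B\setminus 2B$, while the other two succumb to Cauchy--Schwarz together with Caccioppoli. A parallel technicality is that Lemma~\ref{l3.2} and the above expansion must be executed weakly, since $u\psi\notin D(A)$ in general---as in~\cite{krs}, this is handled by approximating $w_{x_0}^2$ by bounded truncations lying in $D(A)$.
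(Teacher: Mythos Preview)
Your approach is essentially the paper's: rewrite $J$ via the Leibniz identity and Lemma~\ref{l3.2} so that the $\psi g$ term cancels, then bound the heat-content term $\frac{1}{2t}\int_X w_{x_0}^2\,p$ and the annular error $\int_0^t\int_{4B\setminus 2B}w_{x_0}[uA\psi+2Du\cdot D\psi]\,p$, handling $uA\psi$ by passing to $-\int D\psi\cdot D(w_{x_0}up)$ and invoking Lemma~\ref{l2.3} for $D_yp$ together with Lemma~\ref{l2.9} for $Du$. One small correction: Lemma~\ref{l3.1} is stated only for $x\in 2B$, so on the annulus $4B\setminus 2B$ you should instead use the crude bound $|w_{x_0}|\le 2\|u\psi\|_{L^\infty}\le CRC(u,g)$ via Lemma~\ref{l2.8} (this is what the paper does); your direct $L^2$/Gaussian argument for the main term at $t=R^2$ is a valid alternative to the paper's route through $\|u\|_{L^\infty(4B)}$ and Lemma~\ref{l2.8}.
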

\begin{proof}
Since $w_{x_0}(t,x)=u(x)\psi(x)-T_t(u\psi)(x_0),$ we have
$$|D(u\psi)|^2=|Dw_{x_0}|^2=\frac 12 A w_{x_0}^2-w_{x_0}(\psi Au+uA\psi+2Du\cdot D\psi)$$
in the weak sense of measures. Also, in what follows we extend $A$ formally
to all of $H^{1,2}(X)$ by defining
\begin{equation*}
\int_X v(x) Au(x)\,d\mu(x)=-\int_X Dv(x)\cdot Du(x)\,d\mu(x)=\int_X Av(x) u(x)\,d\mu(x).
\end{equation*}

Moreover, we set $m(t)=T_t(u\psi)(x_0)$. Then
$\frac{\pa}{\pa t}w_{x_0}^2=2w_{x_0}\frac{\pa}{\pa t}w_{x_0}=-2w_{x_0}m'(t)$, which further implies that
$$|Dw_{x_0}|^2=\frac 12 \lf(A+\frac{\pa}{\pa t}\r)w_{x_0}^2-w_{x_0}(\psi Au+uA\psi+2Du\cdot D\psi-m'(t))$$
in the weak sense of measures.
Thus, we obtain
\begin{eqnarray}\label{x3.5}
&&\int_0^t\int_X |Dw_{x_0}(s,x)|^2 p(s,x_0,x)\,d\mu(x)\,ds\nonumber\\
&&\hs =\frac 12 \int_0^t\int_X \lf(A+\frac{\pa}{\pa s}\r)
w^2_{x_0}(s,x)p(s,x_0,x)\,d\mu(x)\,ds\nonumber\\
&&\hs\hs-\int_0^t\int_X w_{x_0}(s,x)[\psi Au+uA\psi+2Du\cdot D\psi-m'(s)]p(s,x_0,x)\,d\mu(x)\,ds.
\end{eqnarray}
Recall that for each $s>0$ and $x_0\in X$, $T_s(1)(x_0)=1$.
We then have
\begin{eqnarray*}
&&\int_0^t\int_X w_{x_0}(s,x)m'(s)p(s,x_0,x)\,d\mu(x)\,ds
=\int_0^t\int_X m'(s)T_s(u\psi)(x_0)\lf(1-T_s(1)(x_0)\r)\,ds=0.
\end{eqnarray*}

We now estimate the second term in \eqref{x3.5}. Recall that
$\psi=1$ on $2B=2B_R(y_0)$ and $\supp \psi \subseteq 4B$.
By  Lemma \ref{l2.8}, Lemma \ref{l2.9}, Lemma \ref{l2.3} and the H\"older
inequality, we obtain
\begin{eqnarray*}
&&\lf|\int_0^t\int_X w_{x_0}(s,x)u(x) A\psi(x)p(s,x_0,x)\,d\mu(x)\,ds\r|\nonumber\\
&&\hs=\lf|\int_0^t\int_X D(w_{x_0}(s,\cdot)up(s,x_0,\cdot))(x)\cdot D\psi(x)
\,d\mu(x)\,ds\r|\nonumber\\
&&\hs\le Ct^{1/2}R^{-1+\frac Q2}\|u\|_{L^\fz(4B)}^2
\lf(\int_0^t\int_{5B_R(x_0)\setminus B_R(x_0)} |Dp(s,x_0,x)|^2
\,d\mu(x)\,ds\r)^{1/2}\nonumber\\
&&\hs\hs+CtR^{-1+\frac Q2} t^{-\frac Q2}e^{-\frac{R^2}{ct}}\|u\|_{L^\fz(4B)}
\lf(\int_{4B\setminus 2B}(|Du(x)|^2+|D(u\psi)(x)|^2)\,d\mu(x)\r)^{1/2}\nonumber\\
&&\hs\le C t^{1/2}R^{-1}e^{-\frac{R^2}{ct}}\|u\|^2_{L^\fz(4B)}+
CtR^{-1-\frac Q2}e^{-\frac{R^2}{ct}}\|u\|_{L^\fz(4B)}\|Du\|_{L^2(4B)}\nonumber\\
&&\hs\le Cte^{-cR^2/t}(R^{-1-\frac Q2}\|u\|_{L^2(8B)}+R^{1-\frac Qp}\|g\|_{L^p(8B)})^2.
\end{eqnarray*}
Similarly, we have
\begin{eqnarray*}
&&\lf|\int_0^t\int_X w_{x_0}(s,x)p(s,x_0,x)Du(x)\cdot D\psi(x)\,d\mu(x)\,ds\r|\nonumber\\
&&\hs\le Cte^{-cR^2/t}(R^{-1-\frac Q2}\|u\|_{L^2(8B)}+R^{1-\frac Qp}\|g\|_{L^p(8B)})^2.
\end{eqnarray*}

Combining the above estimates, by \eqref{x3.5} and Lemma \ref{l3.2}, we obtain that
\begin{eqnarray}\label{x3.6}
tJ(t)&&\le \frac 12 \lf|\int_0^t\int_X \lf(A+\frac{\pa}{\pa s}\r)
w_{x_0}^2(s,x)p(s,x_0,x)\,d\mu(x)\,ds\r|\nonumber\\
&&\hs+\lf|\int_0^t\int_X w_{x_0}(s,x)[u(x) A\psi(x)
+2Du(x)\cdot D\psi(x)]p(s,x_0,x)\,d\mu(x)\,ds\r|\nonumber\\
&& \le \frac 12 \int_X w_{x_0}^2(t,x)p(t,x_0,x)\,d\mu(x)
+Cte^{-cR^2/t}\lf(\frac{\|u\|_{L^2(8B)}}{R^{Q/2+1}}+\frac{\|g\|_{L^p(8B)}}{R^{Q/p-1}}\r)^2.
\end{eqnarray}

Hence, by Lemma \ref{l2.8} again, we conclude that
\begin{eqnarray*}
J(R^2)&&\le \frac 1{2R^2}\int_X w_{x_0}^2(R^2,x)p(R^2,x_0,x)\,d\mu(x)
+C\lf(\frac{\|u\|_{L^2(8B)}}{R^{Q/2+1}}+\frac{\|g\|_{L^p(8B)}}{R^{Q/p-1}}\r)^2
\nonumber\\
&&\le \frac 1{2R^2}\|u\|^2_{L^\fz(4B)}\int_Xp(R^2,x_0,x)\,d\mu(x)
+C\lf(\frac{\|u\|_{L^2(8B)}}{R^{Q/2+1}}+\frac{\|g\|_{L^p(8B)}}{R^{Q/p-1}}\r)^2\nonumber\\
&&\le C\lf(\frac{\|u\|_{L^2(8B)}}{R^{Q/2+1}}+\frac{\|g\|_{L^p(8B)}}{R^{Q/p-1}}\r)^2,
\end{eqnarray*}
which completes the proof of Proposition \ref{p3.2}.
\end{proof}

We use the H\"older continuity (Lemma \ref{l3.1}) of $w_{x_0}(t,x)$ to deduce the following estimate.
\begin{prop}\label{p3.3}
Let $Q\in (1,\fz)$ and $p\in (\frac Q2,\fz]\cap (1,\fz]$.
Then there exists $C>0$ such that for all
$u\in H^{1,2}_\loc(X)$ and $g\in L^p_\loc(X)$ that satisfy $\Delta u=g$ in $8B$,
where $B=B_R(y_0)$ with $R<R_0/8$, and almost every $x_0\in B$,
\begin{eqnarray*}
&&\int_0^{R^2} \frac 1t\int_X w_{x_0}^2(t,x)p(t,x_0,x)\,d\mu(x)\,dt\le CR^2C(u,g)^2,
\end{eqnarray*}
where $C(u,g)=R^{-Q/2-1}\|u\|_{L^2(8B)}+R^{1-Q/p}\|g\|_{L^p(8B)}$.
\end{prop}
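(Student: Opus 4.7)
The plan is to split the spatial integral $\int_X w_{x_0}^2(t,x) p(t,x_0,x)\,d\mu(x)$ into the region $2B$ and its complement, and treat each separately, exploiting Hölder continuity on the first and Gaussian decay on the second.

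On $2B$, Lemma \ref{l3.1} gives $|w_{x_0}(t,x)| \le CC(u,g)R^{1-\gz}(d(x,x_0)^\gz + t^{\gz/2})$ for some $\gz\in(0,1)$. Squaring and integrating against the heat kernel, the key elementary estimate is $d(x,x_0)^{2\gz} e^{-d(x,x_0)^2/(C_1 t)} \le C t^\gz e^{-d(x,x_0)^2/(2C_1 t)}$, so combining with the Gaussian upper bound \eqref{x2.1}, comparison with $p(lt,x_0,\cdot)$ for $l = 2C_1/C_2$, and the total-mass identity \eqref{x2.2}, I get
\begin{equation*}
\int_{2B} w_{x_0}^2(t,x) p(t,x_0,x)\,d\mu(x) \le C\,C(u,g)^2\, R^{2-2\gz}\, t^{\gz}.
\end{equation*}

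Off $2B$, since $\supp\psi\subset 4B$, one has $|w_{x_0}(t,x)| \le \|u\psi\|_{L^\fz(X)} + |T_t(u\psi)(x_0)| \le C\|u\|_{L^\fz(4B)}$, which by Lemma \ref{l2.8} applied on $4B \subset 8B$ is bounded by $CR\,C(u,g)$. For $x_0\in B$ and $x\in X\setminus 2B$ we have $d(x,x_0)\ge R$, so the Gaussian upper bound contributes $e^{-cR^2/t}$, giving
\begin{equation*}
\int_{X\setminus 2B} w_{x_0}^2(t,x) p(t,x_0,x)\,d\mu(x) \le CR^2 C(u,g)^2 e^{-cR^2/t}.
\end{equation*}

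It remains to divide by $t$ and integrate over $(0,R^2)$. The first contribution yields $\int_0^{R^2} R^{2-2\gz} t^{\gz-1}\,dt = \gz^{-1} R^2$; the second, after the substitution $s=R^2/t$, reduces to $R^2 \int_1^\fz s^{-1} e^{-cs}\,ds \le CR^2$. Both pieces give $CR^2 C(u,g)^2$, which is the claim. The main delicate point is precisely the behavior at $t\to 0^+$: the factor $1/t$ would be non-integrable without the $t^\gz$ gain coming from the Hölder estimate of Lemma \ref{l3.1}, which is why one cannot get away with a mere $L^\fz$ bound for $w_{x_0}$ on $2B$.
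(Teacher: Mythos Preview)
Your proof is correct and follows essentially the same route as the paper: split the spatial integral at $2B$, apply Lemma~\ref{l3.1} on $2B$ to extract the $t^{\gz}$ gain, use the $L^\fz$ bound from Lemma~\ref{l2.8} together with Gaussian decay on $X\setminus 2B$, and then integrate in $t$. The only cosmetic difference is that the paper absorbs the off-$2B$ term via $e^{-cR^2/t}\le C(t/R^2)^{\gz}$ before integrating, whereas you integrate the two pieces separately; both are fine.
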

\begin{proof}

By Lemma \ref{l3.1}, we deduce that
\begin{eqnarray*}
&&\int_X w_{x_0}^2(t,x)p(t,x_0,x)\,d\mu(x)\\
&&\hs= \int_{2B}w_{x_0}^2(t,x)p(t,x_0,x)\,d\mu(x)
+\int_{X\setminus 2B}w_{x_0}^2(t,x)p(t,x_0,x)\,d\mu(x)\nonumber\\
&&\hs \le C[C(u,g)R^{1-\gz}]^2\int_{2B}(d(x,x_0)^\gz+t^{\gz/2})^2t^{-\frac Q2}e^{-\frac{d(x,x_0)^2}{2C_1t}}
e^{-\frac{d(x,x_0)^2}{2C_1t}}\,d\mu(x)\nonumber\\
&&\hs\hs+C\|u\|_{L^\fz(4B)}^2\int_{X\setminus 2B}
t^{-\frac Q2}e^{-\frac{d(x,x_0)^2}{2C_1t}}
e^{-\frac{d(x,x_0)^2}{2C_1t}}\,d\mu(x)\nonumber\\
&&\hs \le C[C(u,g)R^{-\gz}]^2t^{\gz}\int_{2B}p(lt,x_0,x)\,d\mu(x)+Ce^{-cR^2/t}\|u\|_{L^\fz(4B)}^2
\int_{X\setminus 2B}p(lt,x_0,x)\,d\mu(x)\nonumber\\
&&\hs\le CR^2C(u,g)^2[R^{-2\gz}t^\gz+e^{-cR^2/t}]\int_{X}p(lt,x_0,x)\,d\mu(x)\\
&&\hs\le CR^2C(u,g)^2R^{-2\gz}t^\gz,
\end{eqnarray*}
where $l=\frac{2C_1}{C_2}$ and we used the fact that $e^{-cR^2/t}\le C(\frac t{R^2})^\gz$. From this, we further conclude that
\begin{eqnarray*}
&&\int_0^{R^2} \frac 1t\int_X w_{x_0}^2(t,x)p(t,x_0,x)\,d\mu(x)\le \int_0^{R^2}
CC(u,g)^2R^{2-2\gz}t^{\gz-1}\,dt\le CR^2C(u,g)^2,
\end{eqnarray*}
which completes the proof of Proposition \ref{p3.3}.
\end{proof}

We are now in position to prove the main result of this section.

\begin{proof}[{\bf Proof of Theorem \ref{t3.1}}]
 Let us first estimate the derivative $J'(t)=\frac{\,d}{\,dt}J(t)$.
By \eqref{x3.3}, \eqref{x3.1} and \eqref{x3.6}, we deduce that
\begin{eqnarray*}
\frac{\,d}{\,dt}J(t)&&=-\frac{1}{t^2}J(t)+\frac{1}{t}
\int_X |Dw_{x_0}(t,x)|^2 p(t,x_0,x)\,d\mu(x)\nonumber\\
&&\hs\hs+\frac{1}{t}\int_X w_{x_0}(t,x)\psi(x)g(x)p(t,x_0,x)\,d\mu(x)\nonumber\\
&&\hs\ge \frac{1}{t}\lf(\int_X |Dw_{x_0}(t,x)|^2 p(t,x_0,x)\,d\mu(x)-
\frac 1{2t}\int_X w_{x_0}^2(t,x)p(t,x_0,x)\,d\mu(x)\r)\nonumber\\
&&\hs\hs-\frac Cte^{-cR^2/t}C(u,g)^2
+\frac{1}{t} \int_X w_{x_0}(t,x)\psi(x)g(x)p(t,x_0,x)\,d\mu(x).
\end{eqnarray*}

For each fixed $t\in (0,R^2)$, either
$$\int_X |Dw_{x_0}(t,x)|^2 p(t,x_0,x)\,d\mu(x)\ge
\frac 1{2t}\int_X w_{x_0}^2(t,x)p(t,x_0,x)\,d\mu(x)$$
or
$$\int_X |Dw_{x_0}(t,x)|^2 p(t,x_0,x)\,d\mu(x)<
\frac 1{2t}\int_X w_{x_0}^2(t,x)p(t,x_0,x)\,d\mu(x).$$
In the first case, we have
\begin{eqnarray}\label{x3.7}
\frac{\,d}{\,dt}J(t)\ge&& -\frac Cte^{-cR^2/t}C(u,g)^2
-\frac{1}{t} \int_X w_{x_0}(t,x)\psi(x)g(x)p(t,x_0,x)\,d\mu(x).
\end{eqnarray}
In the second case, by the curvature condition \eqref{1.3} with $T=R^2$, we deduce that
\begin{eqnarray}\label{x3.8}
\frac{\,d}{\,dt}J(t)&&\ge-c_{\kz}(R^2)\int_X |Dw_{x_0}(t,x)|^2 p(t,x_0,x)\,d\mu(x)
-\frac Cte^{-cR^2/t}C(u,g)^2\nonumber\\
&&\hs+\frac{1}{t} \int_X w_{x_0}(t,x)\psi(x)g(x)p(t,x_0,x)\,d\mu(x)\nonumber\\
&&\ge-\frac{c_{\kz}(R^2)}{2t}\int_X w_{x_0}^2(t,x)p(t,x_0,x)\,d\mu(x)
-\frac Cte^{-cR^2/t}C(u,g)^2\nonumber\\
&&\hs+\frac{1}{t} \int_X w_{x_0}(t,x)\psi(x)g(x)p(t,x_0,x)\,d\mu(x).
\end{eqnarray}
From \eqref{x3.7} and \eqref{x3.8},
we see that \eqref{x3.8} holds in both cases.
Integrating over $(0,R^2)$ and applying Proposition \ref{p3.3} we conclude that
\begin{eqnarray*}
\int_0^{R^2}J'(t)\,dt&&\ge-\int_0^{R^2}\lf\{\frac{c_{\kz}(R^2)}{2t}\int_X w_{x_0}^2(t,x)p(t,x_0,x)\,d\mu(x)
-\frac Cte^{-cR^2/t}C(u,g)^2\r\}\,dt\nonumber\\
&&\hs+\int_0^{R^2}\frac{1}{t} \int_X w_{x_0}(t,x)\psi(x)g(x)p(t,x_0,x)\,d\mu(x)\,dt\\
&&\ge -C(1+c_{\kz}(R^2)R^2)C(u,g)^2+\int_0^{R^2}\frac{1}{t}
\int_X w_{x_0}(t,x)\psi(x)g(x)p(t,x_0,x)\,d\mu(x)\,dt.
\end{eqnarray*}
Combining Proposition \ref{p3.1} and Proposition \ref{p3.2}, we obtain that for almost every $x_0\in B$,
\begin{eqnarray*}
|Du(x_0)|^2&&=J(R^2)-\int_0^{R^2} \frac{\,d}{\,dt}J(t)\,dt\\
&&\le C(1+ c_{\kz}(R^2)R^2)C(u,g)^2+\bigg|\int_0^{R^2}\frac{1}{t}
\int_X w_{x_0}(t,x)\psi(x)g(x)p(t,x_0,x)\,d\mu(x)\,dt\bigg|,
\end{eqnarray*}
which completes the proof of Theorem \ref{t3.1}.
\end{proof}

We end this section by using Theorem \ref{t3.1} to obtain an $L^\fz$-estimate for
$|Du|$ when $g\in L^\fz$.
\begin{lem}\label{l3.3}
Let $Q\in (1,\fz)$ and $B=B_R(y_0)$ with $R<R_0/8$.
Suppose that $u\in H^{1,2}_\loc(X)$ and $g\in L^\fz_\loc(X)$ that satisfy $\Delta u=g$ in $8B$.
Then $\||D u|\|_{L^\fz(B)}<\fz.$
\end{lem}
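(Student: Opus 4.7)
The plan is to apply Theorem \ref{t3.1} with the choice $p=\infty$, which is admissible since $g\in L^\infty_\loc(X)\subset L^p_\loc(X)$ for every $p>Q/2$ (indeed every $p$) once we restrict to the bounded set $8B$. Specializing the conclusion and writing $C(u,g)=R^{-Q/2-1}\|u\|_{L^2(8B)}+R\|g\|_{L^\infty(8B)}$, we get for almost every $x_0\in B$
\begin{equation*}
|Du(x_0)|^2\le C(1+c_{\kz}(R^2)R^2)C(u,g)^2+\int_0^{R^2}\frac{1}{t}\int_X|w_{x_0}(t,x)\psi(x)g(x)|\,p(t,x_0,x)\,d\mu(x)\,dt.
\end{equation*}
The first term on the right is a finite constant depending only on $R$, $\|u\|_{L^2(8B)}$ and $\|g\|_{L^\infty(8B)}$, so it suffices to show that the integral term is bounded by a constant independent of $x_0$.

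To bound this integral, I would first pull $\|g\|_{L^\infty(8B)}$ out (using $\supp\psi\subset 4B\subset 8B$), reducing the question to estimating
\begin{equation*}
\int_0^{R^2}\frac{1}{t}\int_X|w_{x_0}(t,x)|\psi(x)\,p(t,x_0,x)\,d\mu(x)\,dt.
\end{equation*}
Applying the Cauchy--Schwarz inequality with respect to the probability measure $p(t,x_0,\cdot)\,d\mu$ (recall $T_t 1\equiv 1$ by \eqref{x2.2}) and using $|\psi|\le 1$, the inner integral is dominated by $\bigl(\int_X w_{x_0}^2(t,x)p(t,x_0,x)\,d\mu(x)\bigr)^{1/2}$.

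Now comes the key ingredient: the pointwise-in-$t$ estimate
\begin{equation*}
\int_X w_{x_0}^2(t,x)\,p(t,x_0,x)\,d\mu(x)\le CR^{2-2\gz}C(u,g)^2\,t^\gz
\end{equation*}
with $\gz\in(0,1)$ as in Lemma \ref{l2.7}, which is precisely what is established (before the final $\,dt$-integration) inside the proof of Proposition \ref{p3.3} using the H\"older estimate of Lemma \ref{l3.1}. Inserting this bound yields an integrand of order $t^{\gz/2-1}$, and
\begin{equation*}
\int_0^{R^2} t^{\gz/2-1}\,dt=\frac{2}{\gz}R^\gz,
\end{equation*}
so the whole integral is at most $CR^2\, C(u,g)\,\|g\|_{L^\infty(8B)}$. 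Combining gives a uniform bound on $|Du(x_0)|$ for a.e.\ $x_0\in B$, hence $\||Du|\|_{L^\infty(B)}<\fz$.

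The only mild subtlety is ensuring that the bound for $\int w_{x_0}^2 p\,d\mu$ really holds for each fixed $t\in(0,R^2)$ and not merely after integrating in $t$; but the chain of inequalities in the proof of Proposition \ref{p3.3} is pointwise in $t$ before the outer $\int_0^{R^2}\frac{1}{t}\,dt$ is applied, so no extra work is needed. Everything else is absorbed into finite constants depending on $R$, $\|u\|_{L^2(8B)}$, $\|g\|_{L^\infty(8B)}$ and $c_\kz(R^2)$.
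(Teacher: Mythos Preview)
Your proof is correct and follows essentially the same route as the paper: apply Theorem \ref{t3.1} with $p=\infty$, then control the integral term via the H\"older continuity of $w_{x_0}$ from Lemma \ref{l3.1}, reducing to the convergent integral $\int_0^{R^2}t^{\gz/2-1}\,dt$. The only cosmetic difference is that you insert a Cauchy--Schwarz step to reach the pointwise-in-$t$ bound on $\int_X w_{x_0}^2\,p\,d\mu$ established inside Proposition \ref{p3.3}, while the paper applies Lemma \ref{l3.1} directly to $|w_{x_0}|$; both paths yield the same $CC(u,g)\|g\|_{L^\infty(8B)}$ control and the conclusion.
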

\begin{proof} By Theorem \ref{t3.1}, we have that for almost every $x_0\in B$,
\begin{eqnarray*}
|Du(x_0)|^2&&\le C(1+ c_{\kz}(R^2)R^2)C(u,g)^2+
\bigg|\int_0^{R^2}\frac{1}{t} \int_X w_{x_0}(t,x)\psi(x)g(x)p(t,x_0,x)\,d\mu(x)\,dt\bigg|,
\end{eqnarray*}
where $C(u,g)=R^{-Q/2-1}\|u\|_{L^2(8B)}+R\|g\|_{L^\fz(8B)}$.
Applying Lemma \ref{l3.1}, similarly to the proof of Proposition \ref{p3.3},
we further deduce that
\begin{eqnarray*}
|Du(x_0)|^2&&\le C(1+ c_{\kz}(R^2)R^2)C(u,g)^2+CC(u,g)\|g\|_{L^\fz(8B)},
\end{eqnarray*}
which implies that $\||Du|\|_{L^\fz(B)}<\fz$, proving the lemma.
\end{proof}

\section{Auxiliary equations}
\hskip\parindent  Suppose that $\Delta u=g$ in $8B$. From Section 3,
we have the following pointwise boundedness of $|Du|$: for almost every $x_0\in B$,
\begin{eqnarray*}
|Du(x_0)|^2&&\le C(1+ c_{\kz}(R^2)R^2)C(u,g)^2+
\int_0^{R^2}\frac{1}{t} \int_X \lf|w_{x_0}(t,x)\psi(x)g(x)\r|p(t,x_0,x)\,d\mu(x)\,dt,
\end{eqnarray*}
where $C(u,g)=R^{-Q/2-1}\|u\|_{L^2(8B)}+R^{1-Q/p}\|g\|_{L^p(8B)}$ and
$p\in (\frac Q2,\fz]\cap (1,\fz]$.
Hence, the main problem left is to estimate the second term on the right-hand side.
We do not know how to estimate it for general $g$, but we can estimate it provided
that we assume that the support of $g$ is contained in $\lz B$ for some $\lz\in (0,1)$.

Thus, in this section, we study the auxiliary equation
that for a ball $B=B_R(y_0)$ with $R<R_0/8$,
\begin{equation*}
-\int_{8B} Du(x)\cdot D\phi(x)\,d\mu(x)=\int_{8B} g(x)\phi(x)\,d\mu(x),
\ \ \forall \phi\in H_0^{1,2}(8B),
\end{equation*}
where $u\in H^{1,2}_0(8B)$ and $g\in L^\fz(X)$ with $\supp g\subset B/4$.

The main aim of this section is to prove Theorem \ref{t1.1} and Theorem
\ref{t1.2} when $u$ and $g$ are as above.

\begin{thm}\label{t4.1}
Let $Q\in (1,\fz)$ and suppose that \eqref{1.1} and \eqref{1.3} hold.
Then there exists $c,C>0$ such that for all $u\in H^{1,2}_0(8B)$ and
$g\in L^\fz(X)$ with $\supp g\subset B/4$ that satisfy
$\Delta u=g$ in $8B$, where $B=B_R(y_0)$ with $R<R_0/8$:

(i)
\begin{eqnarray*}
\fint_B \exp \lf\{\frac{c|Du(x_0)|}
{(1+ \sqrt {c_{\kz}(R^2)}R)\|g\|_{L^Q(B/4)}}\r\}^{\frac{Q}{Q-1}}\,d\mu(x_0)&&\le C;
\end{eqnarray*}

(ii) for $p\in (\frac Q2,Q)\cap (1,Q)$,
$$\lf(\fint_B |Du|^{p^\ast}\,d\mu\r)^{1/p^\ast}\le
C(1+\sqrt{c_{\kz}(R^2)}R)R\lf(\fint_{B/4}|g|^p\,d\mu\r)^{1/p}.$$
\end{thm}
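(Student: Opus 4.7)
The starting point is Theorem \ref{t3.1}, which under the hypotheses of Theorem \ref{t4.1} ($u\in H^{1,2}_0(8B)$ and $\supp g\subset B/4$) gives
$$|Du(x_0)|^2 \le C(1+c_\kappa(R^2)R^2)C(u,g)^2 + \int_0^{R^2}\frac{1}{t}\int_X |w_{x_0}(t,x)\psi(x)g(x)|\,p(t,x_0,x)\,d\mu(x)\,dt$$
for almost every $x_0\in B$. The plan is to bound each summand by the right-hand sides of (i) and (ii), by first absorbing the zeroth-order term into the stated constants and then identifying the integral with a Riesz-type potential of $g$, to which standard Moser--Trudinger / Hardy--Littlewood--Sobolev mapping theorems apply.

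The zeroth-order term is straightforward: since $u\in H^{1,2}_0(8B)$, Lemma \ref{l2.4} applied on $8B$ yields $\|u\|_{L^\infty(8B)}\le CR^{2-Q/p}\|g\|_{L^p(B/4)}$ for every $p>Q/2$, hence $C(u,g)\le CR^{1-Q/p}\|g\|_{L^p(B/4)}$. Choosing $p=Q$ and combining with the curvature factor $(1+c_\kappa(R^2)R^2)^{1/2}$ recovers the prefactor $(1+\sqrt{c_\kappa(R^2)}R)\|g\|_{L^Q(B/4)}$ in (i), and choosing $p$ as given in (ii) produces the mean-norm $R(\fint_{B/4}|g|^p\,d\mu)^{1/p}$ after noting that $R^{1-Q/p}\|g\|_{L^p(B/4)}\sim R\cdot R^{-Q/p}\|g\|_{L^p(B/4)}$.

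For the integral term, I invoke Lemma \ref{l3.1} to estimate $|w_{x_0}(t,x)|\le CC(u,g)R^{1-\gamma}(d(x,x_0)^\gamma+t^{\gamma/2})$, and then exploit the elementary consequence $d(x,x_0)^\gamma p(t,x_0,x)\lesssim t^{\gamma/2}p(lt,x_0,x)$ of the Gaussian upper bound \eqref{x2.1}, with $l=2C_1/C_2$, to obtain
$$\int_0^{R^2}\frac{1}{t}\int_X|w_{x_0}||g|\,p\,d\mu\,dt\;\lesssim\;C(u,g)R^{1-\gamma}\int_0^{R^2}t^{\gamma/2-1}T_{lt}|g|(x_0)\,dt.$$
Fubini and the substitution $s=d(x_0,y)^2/t$ identify the time integral on the right with the truncated classical Riesz potential $\int_{B_R(x_0)}|g(y)|\,d(x_0,y)^{\gamma-Q}\,d\mu(y)$, up to an exponentially small far-field remainder. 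Estimate (ii) then follows from the Hardy--Littlewood--Sobolev inequality $I_\gamma:L^p\to L^q$ with $1/q=1/p-\gamma/Q$ on the $Q$-regular space, after raising $|Du|^2$ to the $p^\ast/2$-power, integrating over $B$, and tracking scaling in $R$. Estimate (i) follows from the endpoint Moser--Trudinger inequality for Riesz potentials, formulated in the Orlicz framework of Section 2.3 and paired with the Orlicz--Hölder inequality \eqref{x2.4} to handle the normalisation by $\|g\|_{L^Q}$.

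The main obstacle is the exponent mismatch between the Hölder exponent $\gamma<1$ supplied by Lemma \ref{l3.1} (inherited from the De Giorgi--Nash--Moser iteration in Lemma \ref{l2.7}) and the order-$1$ potential $I_1g$ that naturally produces the Moser--Trudinger exponent $Q/(Q-1)$ and the Sobolev exponent $p^\ast=Qp/(Q-p)$: indeed, the endpoint Moser--Trudinger for $I_\gamma$ requires $g\in L^{Q/\gamma}$, not $g\in L^Q$. To bridge the gap I would exploit the extra hypothesis $g\in L^\infty$ of Theorem \ref{t4.1}: Lemma \ref{l3.3} gives $|Du|\in L^\infty(B)$, which upgrades the bound on $w_{x_0}$ to an essentially Lipschitz one $|w_{x_0}(t,x)|\lesssim \|Du\|_{L^\infty(B)}(d(x,x_0)+\sqrt t)$, thereby replacing $I_\gamma g$ by $I_1 g$ in the preceding display. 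One then closes the estimate by absorbing the $\|Du\|_{L^\infty(B)}$ factor into the left-hand side via Young's inequality, which simultaneously yields the sharp Moser--Trudinger norm in (i) and the $L^{p^\ast}$ norm in (ii).
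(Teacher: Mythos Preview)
Your setup and the treatment of the zeroth-order term are correct, and you correctly identify the central difficulty: the De Giorgi--Nash--Moser H\"older exponent $\gamma<1$ from Lemma~\ref{l3.1} produces the potential $I_\gamma g$, whose endpoint mapping properties do not match the target exponents $Q/(Q-1)$ and $p^\ast$. However, your proposed fix does not close. If you upgrade to $|w_{x_0}(t,x)|\lesssim \|Du\|_{L^\infty(B)}(d(x,x_0)+\sqrt t)$ via Lemma~\ref{l3.3}, the resulting pointwise inequality reads
\[
|Du(x_0)|^2 \;\le\; C(1+c_\kappa(R^2)R^2)C(u,g)^2 \;+\; C\,\|Du\|_{L^\infty(B)}\,\car_{1,0}(|g|)(x_0),
\]
and you now propose to absorb $\|Du\|_{L^\infty(B)}$ into the left-hand side by Young. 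But after taking norms the left-hand side is $\||Du|\|_{\Psi_{R,1^\ast}(B)}^2$ in~(i) or $\||Du|\|_{L^{p^\ast}(B)}^2$ in~(ii), and neither of these dominates $\|Du\|_{L^\infty(B)}^2$; absorption is impossible. Taking instead the $L^\infty$-norm on both sides would allow absorption, but $\|\car_{1,0}(|g|)\|_{L^\infty}$ is not controlled by $\|g\|_{L^Q}$ or by $\|g\|_{L^p}$ for any $p\le Q$, so you would not recover the stated bounds.

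The paper resolves this with a different idea (Lemma~\ref{l4.2}): rather than using the qualitative fact $|Du|\in L^\infty$, one expresses the modulus of continuity of $u$ in terms of the \emph{target} norm of $|Du|$ via a telescoping Poincar\'e argument. For~(i) this gives
\[
|u(x_0)-u(x)|\;\le\; C\,d(x_0,x)\,\log^{1/1^\ast}\!\Bigl(\tfrac{eR}{d(x_0,x)}\Bigr)\,\||Du|\|_{\Psi_{R,1^\ast}(B)},
\]
and for~(ii), $|u(x_0)-u(x)|\le C\,d(x_0,x)^{2-Q/p}\,\||Du|\|_{L^{p^\ast}(B)}$. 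Feeding these into the $w_{x_0}$-integral (Proposition~\ref{p4.1}) yields
\[
|Du(x_0)|^2 \;\lesssim\; (\text{good terms}) \;+\; \bigl(\|g\|+\||Du|\|_Y\bigr)\,\car(|g|)(x_0),
\]
where $\|\cdot\|_Y$ is the target norm and $\car$ is the matching Riesz-type potential ($\car_{1,1/1^\ast}$ in~(i), $\car_{2-Q/p,0}$ in~(ii)). The mapping properties of Theorem~\ref{t4.2} place $\car(|g|)$ precisely in the space corresponding to $|Du|^2$, so taking the $Y$-norm gives $\||Du|\|_Y^2\le C\|g\|^2+C\|g\|\,\||Du|\|_Y$, and \emph{this} closes by Young. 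Note in particular that the logarithmic factor in Lemma~\ref{l4.2}(i) is not incidental: it is exactly what makes $\car_{1,1/1^\ast}(|g|)$ land in $\Psi_{R,1^\ast/2}$ (the space for $|Du|^2$) rather than merely in $\Psi_{R,1^\ast}$ (the space for $|Du|$).
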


Using our assumption that the support of $g$ lies in $B/4$, we deduce
following estimate on $|Du(x_0)|$ for $x_0\in B\setminus \frac 38B$.
\begin{lem}\label{l4.1}
For $p\in (\frac Q2,Q]\cap (1,Q]$, we have
$$\||Du|\|_{L^{\fz}(B\setminus \frac{3}{8}B)}\le C(1+ \sqrt {c_{\kz}(R^2)}R)R^{1-Q/p}\|g\|_{L^{p}(B/4)}.$$
\end{lem}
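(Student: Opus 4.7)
The key observation is that $u$ is Cheeger-harmonic in a neighborhood of the annulus $B\setminus \frac{3}{8}B$, so one can obtain the claimed $L^\infty$-bound on $|Du|$ there from the pointwise estimate of Theorem \ref{t3.1} applied on small balls centered in the annulus, combined with the $L^\infty$-estimate of Lemma \ref{l2.4} for $u$ itself.

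Fix $r:=R/64$ and pick any $x_0\in B\setminus \frac{3}{8}B$. Since $d(x_0,y_0)\ge 3R/8 = 8r+R/4$, one checks that $8B_r(x_0)\cap (B/4)=\emptyset$; as $\supp g\subset B/4$, this gives $g\equiv 0$ on $8B_r(x_0)$, hence $\Delta u=0$ there. Moreover, the cut-off function $\psi$ associated with the ball $B_r(x_0)$ (as constructed in Section 3) is supported in $4B_r(x_0)\subset 8B_r(x_0)$, so $\psi g \equiv 0$ on all of $X$. Applying Theorem \ref{t3.1} to $u$ on the ball $B_r(x_0)$, the integral term in \eqref{x3.2} therefore vanishes, giving for almost every $x\in B_r(x_0)$
$$|Du(x)|^2 \le C\bigl(1+c_{\kz}(r^2)r^2\bigr)\bigl(r^{-Q/2-1}\|u\|_{L^2(8B_r(x_0))}\bigr)^2.$$

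Next, since $8B_r(x_0)\subset 8B$, I use the trivial bound $\|u\|_{L^2(8B_r(x_0))}\le Cr^{Q/2}\|u\|_{L^\infty(8B)}$, together with Lemma \ref{l2.4} applied to $u\in H^{1,2}_0(8B)$ (noting that $\|g\|_{L^p(8B)}=\|g\|_{L^p(B/4)}$ since $\supp g\subset B/4$), which gives $\|u\|_{L^\infty(8B)}\le CR^{2-Q/p}\|g\|_{L^p(B/4)}$. Combining these and using the monotonicity of $c_{\kz}$ together with $r=R/64$ then yields
$$|Du(x)|\le C\bigl(1+\sqrt{c_{\kz}(R^2)}R\bigr)r^{-1}\|u\|_{L^\infty(8B)}\le C\bigl(1+\sqrt{c_{\kz}(R^2)}R\bigr)R^{1-Q/p}\|g\|_{L^p(B/4)}$$
for almost every $x\in B_r(x_0)$.

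Since the right-hand side is independent of $x_0$, covering the annulus $B\setminus \frac{3}{8}B$ by countably many such balls $B_r(x_0)$ yields the claimed $L^\infty$-estimate. The only minor obstacle is that Theorem \ref{t3.1} produces an a.e.\ pointwise bound rather than an everywhere bound; this is handled by the covering step above together with the identification of $\|\cdot\|_{L^\infty}$ with the essential supremum. The curvature condition \eqref{1.3} enters the argument only through Theorem \ref{t3.1} and is responsible for the factor $(1+\sqrt{c_{\kz}(R^2)}R)$ in the final estimate.
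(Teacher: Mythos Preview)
Your proof is correct but follows a different route from the paper's. The paper applies Theorem~\ref{t3.1} once, on the full ball $B=B_R(y_0)$, obtaining the pointwise inequality \eqref{x4.1} valid for almost every $x_0\in B$; it then observes that for $x_0\in B\setminus\frac38 B$ and $x\in\supp g\subset B/4$ one has $d(x,x_0)>R/8$, and uses the Gaussian upper bound on the heat kernel to estimate the integral term in \eqref{x4.1} directly, showing it is dominated by $C[R^{1-Q/p}\|g\|_{L^p(B/4)}]^2$. Your argument instead localizes: you apply Theorem~\ref{t3.1} on small balls $B_r(x_0)$, $r=R/64$, on which $g$ vanishes, so the integral term is trivially zero, and then control $\|u\|_{L^2(8B_r(x_0))}$ via $\|u\|_{L^\infty(8B)}$ and Lemma~\ref{l2.4}. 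Your approach is slightly cleaner in that it sidesteps the explicit heat-kernel computation; on the other hand, the paper's single application of Theorem~\ref{t3.1} on $B$ yields \eqref{x4.1} for \emph{all} $x_0\in B$, which is reused immediately afterwards in Proposition~\ref{p4.1} for points in $\frac38 B$, whereas your argument would require a separate invocation of Theorem~\ref{t3.1} there.
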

\begin{proof} By Theorem \ref{t3.1}, we have that for almost every $x_0\in B$,
\begin{eqnarray*}
|Du(x_0)|^2&&\le C(1+ c_{\kz}(R^2)R^2)C(u,g)^2+
\int_0^{R^2}\frac{1}{t} \int_X \lf|w_{x_0}(t,x)\psi(x)g(x)\r|p(t,x_0,x)\,d\mu(x)\,dt,
\end{eqnarray*}
where $C(u,g)=R^{-Q/2-1}\|u\|_{L^2(8B)}+R^{1-Q/p}\|g\|_{L^p(B/4)}$.
By Lemma \ref{l2.4}, we have that $\|u\|_{L^\fz(8B)}\le CR^{2-Q/p}\|g\|_{L^p(B/4)}$,
and hence,
\begin{eqnarray}\label{x4.1}
|Du(x_0)|^2&&\le C(1+ c_{\kz}(R^2)R^2)[R^{1-Q/p}\|g\|_{L^p(B/4)}]^2\nonumber\\
&&\hs+\int_0^{R^2}\frac{1}{t} \int_{B/4}|w_{x_0}(t,x)g(x)|p(t,x_0,x)\,d\mu(x)\,dt.
\end{eqnarray}

For every $x_0\in B\setminus \frac{3}{8}B$, since $\supp g\subset B/4$, we have
$d(x,x_0)>R/8$ for each $x\in B/4$. Hence, by the H\"older inequality and Lemma \ref{l2.4},
we deduce that
\begin{eqnarray*}
&&\int_0^{R^2}\frac 1t\int_{B/4}|(u\psi)(x)-T_t(u\psi)(x_0)||g(x)|p(t,x_0,x)\,d\mu(x)\,dt\\
&&\hs\le C\int_0^{R^2}\frac 1t\int_{B/4}|(u\psi)(x)-T_t(u\psi)(x_0)||g(x)|
\frac{1}{t^{Q/2}}e^{-R^2/ct}\,d\mu(x)\,dt\\
&&\hs\le C\|u\|_{L^\fz(8B)}\|g\|_{L^1(B/4)} \int_0^{R^2}
\frac{1}{t^{Q/2+1}}\lf(\frac{t}{R^2}\r)^{Q/2+1}\,dt\\
&&\hs\le C[R^{1-Q/p}\|g\|_{L^p(B/4)}]^2,
\end{eqnarray*}
which together with \eqref{x4.1} proves the lemma.
\end{proof}

Recall that for $R,\az>0$,
$\Psi_{R,\az}(t)=\frac{e^{t^{\az}}-1}{R^Q}$, and its
complementary function $\Phi_{R,1/\az}(t)$,
is equivalent to $t[\log(e+R^Qt)]^{1/\az}$.
By Lemma \ref{l3.3}, our function $u$ has a representative for which
the following holds.
\begin{lem}\label{l4.2}

(i) There exists $C>0$ such that for all $x_0\in \frac 38B$
and $x\in \frac 12B$,
\begin{eqnarray*}
|u(x_0)-u(x)|\le Cd(x_0,x)\log^{1/1^\ast}\lf(\frac{eR}{d(x_0,x)}\r)
\||Du|\|_{\Psi_{R,1^\ast}(B)}.
\end{eqnarray*}

(ii) Let $p\in (\frac Q2,Q)\cap (1,Q)$.
There exists $C>0$  such that for all $x_0\in \frac 38B$ and $x\in \frac 12B$,
\begin{eqnarray*}
|u(x_0)-u(x)|\le Cd(x_0,x)^{2-Q/p}\||Du|\|_{L^{p^\ast}(B)}.
\end{eqnarray*}
\end{lem}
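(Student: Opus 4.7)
The plan is a standard dyadic chaining argument: at each scale the Poincar\'e inequality converts an oscillation into a gradient average, which is then estimated by either H\"older's inequality (part (ii)) or the Orlicz--H\"older inequality \eqref{x2.4} (part (i)). Since $g\in L^\fz$, Lemma \ref{l3.3} combined with Lemma \ref{l2.7} shows that $u$ admits a continuous representative (fixed below), so every point is a Lebesgue point and $u(x_0)=\lim_{k\to\fz}u_{B(x_0,2^{-k}r)}$ for any $r>0$.

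Fix $x_0\in\frac{3}{8}B$, $x\in\frac{1}{2}B$, and set $r:=d(x_0,x)$. We may assume $r\ll R$, since otherwise the bound follows from Lemma \ref{l2.4} combined with the $L^\fz$ estimate on $|Du|$ coming from Lemma \ref{l3.3}. Let $B_k:=B(x_0,2^{-k}r)$ and $\wz B_k:=B(x,2^{-k}r)$; for $r$ small all these balls lie in $B$. Telescoping together with $|u_{B_{k+1}}-u_{B_k}|\ls\fint_{B_k}|u-u_{B_k}|\,d\mu$ gives
\begin{equation*}
|u(x_0)-u_{B_0}|\ls\sum_{k=0}^{\fz}\fint_{B_k}|u-u_{B_k}|\,d\mu,
\end{equation*}
and, by the $(1,1)$-Poincar\'e inequality (which in this $Q$-regular setting follows from \eqref{1.1} by Keith--Zhong type self-improvement), each term is bounded by $C\,r_k\fint_{B_k}|Du|\,d\mu$ with $r_k:=2^{-k}r$. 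An analogous chain from $x$ together with a comparison via an intermediate ball of radius $\sim r$ containing both $x_0$ and $x$ reduces everything to estimating $\sum_k r_k\fint_{B_k}|Du|\,d\mu$.

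For part (ii), H\"older's inequality yields $\fint_{B_k}|Du|\,d\mu\le Cr_k^{-Q/p^\ast}\||Du|\|_{L^{p^\ast}(B)}$, whence
\begin{equation*}
|u(x_0)-u_{B_0}|\ls\||Du|\|_{L^{p^\ast}(B)}\sum_{k=0}^{\fz}r_k^{1-Q/p^\ast}\ls r^{2-Q/p}\||Du|\|_{L^{p^\ast}(B)},
\end{equation*}
the series converging because $1-Q/p^\ast=2-Q/p>0$; the parallel bound from $x$ then closes the argument.

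For part (i), we invoke \eqref{x2.4}:
\begin{equation*}
\fint_{B_k}|Du|\,d\mu\le\frac{C}{\mu(B_k)}\||Du|\|_{\Psi_{R,1^\ast}(B)}\|\chi_{B_k}\|_{\Phi_{R,1/1^\ast}(B)}.
\end{equation*}
A direct Luxemburg-norm computation, based on $\Phi_{R,1/1^\ast}(t)\sim t[\log(e+R^Qt)]^{1/1^\ast}$ and $\mu(B_k)\sim r_k^Q\ll R^Q$, produces $\|\chi_{B_k}\|_{\Phi_{R,1/1^\ast}}\sim\mu(B_k)\log^{1/1^\ast}(eR/r_k)$, hence $r_k\fint_{B_k}|Du|\,d\mu\le Cr_k\log^{1/1^\ast}(eR/r_k)\||Du|\|_{\Psi_{R,1^\ast}(B)}$. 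Using the subadditive estimate $\log^{1/1^\ast}(2^keR/r)\le C(\log^{1/1^\ast}(eR/r)+k^{1/1^\ast})$, valid since $1/1^\ast=(Q-1)/Q<1$, the geometric sum $\sum_{k\ge0}2^{-k}(1+k^{1/1^\ast})$ is finite, yielding the desired modulus. The main technical point will be the verification of the Luxemburg-norm formula for $\chi_{B_k}$ together with the logarithmic bookkeeping in the sum; the geometric constraint that the dyadic chain lies in $B$ is ensured by restricting to the small-scale regime $r\ll R$, and the opposite regime is absorbed via the $L^\fz$ estimates mentioned above.
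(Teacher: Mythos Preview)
Your chaining strategy is the right one, but there is a genuine gap at the Poincar\'e step. You invoke a $(1,1)$-Poincar\'e inequality ``by Keith--Zhong type self-improvement,'' but Keith--Zhong only improves $(1,2)$ to $(1,2-\epsilon)$ for some small $\epsilon>0$; it does not give a $(1,1)$-Poincar\'e, and none is assumed here. The fix is to work directly with the assumed $(1,2)$-Poincar\'e inequality \eqref{1.1}: each step gives $|u_{B_j}-u_{B_{j+1}}|\le C\,r_j\bigl(\fint_{B_j}|Du|^2\,d\mu\bigr)^{1/2}$. For part (ii) one then uses $p^\ast>2$ and H\"older to pass to the $L^{p^\ast}$ average. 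For part (i), apply the Orlicz--H\"older inequality \eqref{x2.4} to $|Du|^2$ with the complementary pair $(\Psi_{R,1^\ast/2},\Phi_{R,2/1^\ast})$, noting that $\||Du|^2\|_{\Psi_{R,1^\ast/2}}=\||Du|\|_{\Psi_{R,1^\ast}}^2$ and that $\|\chi_{B_j}\|_{\Phi_{R,2/1^\ast}}\le C r_j^Q\log^{2/1^\ast}(eR/r_j)$. This yields $|u_{B_j}-u_{B_{j+1}}|\le C r_j\log^{1/1^\ast}(eR/r_j)\||Du|\|_{\Psi_{R,1^\ast}(B)}$, after which your summation goes through unchanged.

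There is also a smaller problem in your treatment of the regime $r\sim R$: Lemmas~\ref{l2.4} and~\ref{l3.3} control $u$ and $|Du|$ in terms of $g$ (or give only qualitative finiteness), not in terms of $\||Du|\|_{\Psi_{R,1^\ast}(B)}$ or $\||Du|\|_{L^{p^\ast}(B)}$, so they do not by themselves deliver the stated inequality. The paper handles large $d(x_0,x)$ by triangulating through the center $y_0$: since $d(x_0,y_0)<3R/8$ and $d(x,y_0)<R/2$, one chains each of the pairs $(x_0,y_0)$ and $(x,y_0)$ separately, and in both cases all dyadic balls in the chain remain inside $B$, so the small-distance estimate applies and the two pieces combine to give the claim.
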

\begin{proof}  Notice that by Lemma \ref{l3.3}, we have
$\||Du|\|_{L^\fz(B)}<\fz.$ Thus we may assume that $u$ is (Lipschitz) continuous in $B$.

For all $x_0\in \frac{3}{8}B$
and $x\in B/2$, $d(x,x_0)<14R/8$. We first consider the case that $d(x,x_0)\le R/8$.
Let $B_1=B(x_0,d(x,x_0))$ and $B_{0}=B(x,2d(x,x_0))$. For $j\ge 2$ and $i\ge 1$
set $B_j=2^{-1}B_{j-1}$ and $B_{-i}=2^{-1}B_{-i+1}$ inductively.
Further,
$$|u(x)-u(x_0)|\le \sum_{j=-\fz}^{\fz}|u_{B_j}-u_{B_{j+1}}|,$$
where for each $j\ge 0$, the Poincar\'e inequality yields that
\begin{eqnarray*}
|u_{B_j}-u_{B_{j+1}}|&&\le C\mathrm{diam}(B_j)
\lf(\frac{1}{\mu(B_j)}\int_{B_j} |Du|^2\,d\mu\r)^{1/2}.
\end{eqnarray*}
Applying the Orlicz-H\"older inequality \eqref{x2.4}, we have
\begin{eqnarray*}
\int_{B_j} |Du|^2\,d\mu&&\le C\||Du|^2\|_{\Psi_{R,1^\ast/2}(B)}
\|\chi_{B_j}\|_{\Phi_{R,2/1^\ast}(X)}=C\||Du|\|^2_{\Psi_{R,1^\ast}(B)}
\|\chi_{B_j}\|_{\Phi_{R,2/1^\ast}(X)},
\end{eqnarray*}
where
\begin{eqnarray*}
\|\chi_{B_j}\|_{\Phi_{R,2/1^\ast}(X)}&&=\inf\lf\{\lz>0:\,
\int_{B_j}\frac{1}{\lz}\log^{2/1^\ast}\lf(e+\frac{R^Q}{\lz}\r)\,d\mu\le 1\r\}\\
&&=\inf\lf\{\lz>0:\,
\frac{1}{\lz}\log^{2/1^\ast}\lf(e+\frac{R^Q}{\lz}\r)\le \mu(B_j)^{-1}\r\}\\
&&\le C (2^{-j}d(x_0,x))^Q \log^{2/1^\ast}\lf(\frac{eR}{2^{-j}d(x_0,x)}\r).
\end{eqnarray*}
Hence, we obtain that
\begin{eqnarray}\label{x4.2}
|u_{B_j}-u_{B_{j+1}}|&&\le C2^{-j}d(x_0,x)
\log^{1/1^\ast}\lf(\frac{eR}{2^{-j}d(x_0,x)}\r)\||Du|\|_{\Psi_{R,1^\ast}(B)}.
\end{eqnarray}

Similarly, for each $j<0$,
\begin{eqnarray*}
|u_{B_j}-u_{B_{j+1}}|&&\le C2^{j}d(x_0,x)
\log^{1/1^\ast}\lf(\frac{eR}{2^{j}d(x_0,x)}\r)\||Du|\|_{\Psi_{R,1^\ast}(B)}.
\end{eqnarray*}
Hence, for all $x_0\in \frac 38B$ and $x\in B/2$ with $d(x,x_0)\le R/8$, we obtain
\begin{eqnarray*}
|u(x_0)-u(x)|&&\le \sum_{j=-\fz}^{\fz}|u_{B_j}-u_{B_{j+1}}|\le Cd(x_0,x)
\log^{1/1^\ast}\lf(\frac{eR}{d(x_0,x)}\r)\||Du|\|_{\Psi_{R,1^\ast}(B)}.
\end{eqnarray*}

For all $x_0\in \frac 38B$ and $x\in B/2$ with $d(x,x_0)\ge R/8$,
by applying a similar approach as in the case $d(x,x_0)\le R/8$ to
the pairs $(x,y_0)$ and $(x_0,y_0)$, respectively, we obtain
\begin{eqnarray*}
|u(x_0)-u(x)|&&\le |u(x)-u(y_0)|+|u(x_0)-u(y_0)|\\
&&\le Cd(x_0,x)
\log^{1/1^\ast}\lf(\frac{eR}{d(x_0,x)}\r)\||Du|\|_{\Psi_{R,1^\ast}(B)}
\end{eqnarray*}
for all $x_0\in \frac 38B$ and $x\in B/2$, proving (i).

By the fact that $p^\ast>2$ for $p\in (\frac Q2,Q)\cap (1,Q)$ and
the H\"older inequality, we have
\begin{eqnarray*}
|u_{B_j}-u_{B_{j+1}}|&&\le C\mathrm{diam}(B_j)
\lf(\fint_{B_j} |Du|^{2}\,d\mu\r)^{1/2}\le C\mathrm{diam}(B_j)
\lf(\fint_{B_j} |Du|^{p^\ast}\,d\mu\r)^{1/p^\ast}.
\end{eqnarray*}
Using this inequality instead of \eqref{x4.2} in the ``telescope" approach above, we see that (ii)
holds, proving the lemma.
\end{proof}

\begin{prop}\label{p4.1}
(i) For $p=Q>1$, there exists $C>0$
such that for almost every $x_0\in B$,
\begin{eqnarray*}
|Du(x_0)|^2&&\le C(1+ c_{\kz}(R^2)R^2)\|g\|_{L^Q(B/4)}^2\\
&&\hs+ C\lf[\|g\|_{L^{Q}(B/4)}+\||Du|\|_{\Psi_{R,1^\ast}(B)}\r]\int_{B/4}
\frac{\log^{1/1^\ast}\lf(\frac{eR}{d(x_0,x)}\r)|g(x)|}{d(x,x_0)^{Q-1}}\,d\mu(x).
\end{eqnarray*}

(ii) For $p\in(\frac Q2,Q)\cap (1,Q)$, there exists $C>0$
such that for almost every $x_0\in B$,
\begin{eqnarray*}
|Du(x_0)|^2&&\le C(1+ c_{\kz}(R^2)R^2)[R^{1-Q/p}\|g\|_{L^p(B/4)}]^2\\
&&\hs+ C\lf[\|g\|_{L^{p}(B/4)}+\||Du|\|_{L^{p^\ast}(B)}\r]\int_{B/4}
\frac{|g(x)|}{d(x,x_0)^{Q-2+Q/p}}\,d\mu(x).
\end{eqnarray*}
\end{prop}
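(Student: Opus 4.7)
The plan is to start from Theorem \ref{t3.1} and estimate the integral term on its right-hand side using the H\"older-type continuity of $u$ furnished by Lemma \ref{l4.2}, eventually reducing it to a Riesz-type potential of $|g|$. First, the case $x_0\in B\setminus \frac{3}{8}B$ is already handled by Lemma \ref{l4.1}, so I may restrict attention to $x_0\in \frac{3}{8}B$. For such $x_0$, Lemma \ref{l2.4} gives $\|u\|_{L^\fz(8B)}\le CR^{2-Q/p}\|g\|_{L^p(B/4)}$ and hence $R^{-Q/2-1}\|u\|_{L^2(8B)}\le CR^{1-Q/p}\|g\|_{L^p(B/4)}$, so the non-integral contribution in Theorem \ref{t3.1} is already of the form claimed in the proposition (with $R^0=1$ producing $\|g\|_{L^Q(B/4)}^2$ in case (i)).

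The remaining task is to bound
\[
\mathcal{I}:=\int_0^{R^2}\frac{1}{t}\int_{B/4}|w_{x_0}(t,x)|\,|g(x)|\,p(t,x_0,x)\,d\mu(x)\,dt,
\]
where $\psi=1$ on $2B\supset B/4$ was used to drop the cutoff. I split $w_{x_0}(t,x)=[u(x)-u(x_0)]+[u(x_0)-T_t(u\psi)(x_0)]=:A(x)+B(t)$ and treat the corresponding pieces $\mathcal{I}_A,\mathcal{I}_B$ separately. For $\mathcal{I}_A$, Lemma \ref{l4.2} applies directly (since $x_0\in \frac{3}{8}B$ and $x\in B/4\subset \frac{1}{2}B$); exchanging the order of integration and performing the substitution $s=d(x_0,x)^2/t$ in
\[
\int_0^{R^2}\frac{p(t,x_0,x)}{t}\,dt\le C\int_0^{R^2}t^{-Q/2-1}e^{-d(x_0,x)^2/(C_1 t)}\,dt\le \frac{C}{d(x_0,x)^Q}
\]
immediately yields the claimed Riesz-type integral weighted by $\||Du|\|_{\Psi_{R,1^\ast}(B)}$ in case (i), and by $\||Du|\|_{L^{p^\ast}(B)}$ in case (ii).

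For $\mathcal{I}_B$, the factor $B(t)$ is independent of $x$. Using $T_t(1)(x_0)=1$, write $B(t)=\int_X[u(x_0)-u(y)\psi(y)]p(t,x_0,y)\,d\mu(y)$ and split the $y$-domain into $B/2$ (where $\psi(y)=1$, so Lemma \ref{l4.2} applies and Gaussian integration in $y$ produces the natural scale $\sqrt{t}$) and $X\setminus B/2$ (where $d(x_0,y)\ge R/8$ forces the kernel to contribute $e^{-cR^2/t}$, while $\|u\|_{L^\fz(8B)}$ bounds the integrand via Lemma \ref{l2.4}). The resulting estimate in case (i) is
\[
|B(t)|\le C\sqrt{t}\log^{1/1^\ast}\!\lf(\tfrac{eR}{\sqrt{t}}\r)\||Du|\|_{\Psi_{R,1^\ast}(B)}+Ce^{-cR^2/t}\|u\|_{L^\fz(8B)},
\]
with the analogue in case (ii) obtained by replacing $\sqrt{t}\log^{1/1^\ast}(eR/\sqrt{t})$ by $t^{1-Q/(2p)}$ and the Zygmund norm by $\||Du|\|_{L^{p^\ast}(B)}$. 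Plugging this into $\mathcal{I}_B$ and once more using $s=d(x_0,x)^2/t$ in the $t$-integral produces the same Riesz-type potential as appeared in $\mathcal{I}_A$, while the exponential remainder is absorbed into the first summand via the elementary bound $\int_0^{R^2}e^{-cR^2/t}t^{-1-Q/2}\,dt\le CR^{-Q}$ together with Lemma \ref{l2.4}.

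The main technical hurdle lies in these $s$-substitutions: one must verify convergence of the resulting one-dimensional integrals of the form $\int_0^\fz s^\alpha e^{-s/C_1}\,ds$, which is precisely where the assumption $p>Q/2$ in case (ii) is used, and in case (i) one must carefully propagate the logarithmic factor through the substitution by means of an elementary estimate such as $\log^{1/1^\ast}(eR/\sqrt{ts})\le C[\log^{1/1^\ast}(eR/\sqrt{t})+\log^{1/1^\ast}(e+1/s)]$, after which the $s$-integral still converges and the sought-for Riesz potential appears.
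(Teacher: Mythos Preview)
Your approach is correct and essentially identical to the paper's: both reduce to $x_0\in\frac38 B$ via Lemma~\ref{l4.1}, split $w_{x_0}(t,x)=[u(x)-u(x_0)]+[u(x_0)-T_t(u\psi)(x_0)]$, estimate the second bracket by splitting the $y$-integral over $B/2$ and its complement exactly as you describe, and finish with the change of variable $s=d(x_0,x)^2/t$. Your treatment of $\mathcal I_A$ (Fubini first, then $\int_0^{R^2}t^{-1}p(t,x_0,x)\,dt\le Cd(x_0,x)^{-Q}$) is slightly more direct than the paper's, which instead first bounds $d(x_0,x)\log^{1/1^\ast}\!\big(\tfrac{eR}{d(x_0,x)}\big)p(t,x_0,x)$ by $Ct^{-(Q-1)/2}\log^{1/1^\ast}(eR^2/t)e^{-d(x_0,x)^2/(2C_1t)}$, merges this with the $\mathcal I_B$ estimate, and performs one combined $t$-integral; the paper also absorbs the exponential remainder into the Riesz coefficient (this is where the extra $\|g\|_{L^p(B/4)}$ in front of the potential comes from), whereas you put it into the first summand---a harmless strengthening.

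One small slip in your last paragraph: after the substitution $s=d(x_0,x)^2/t$ one has $\sqrt t=d/\sqrt s$, so the factor $\log^{1/1^\ast}(eR/\sqrt t)$ becomes $\log^{1/1^\ast}(eR\sqrt s/d)$, and the inequality you need is the reverse of what you wrote, namely $\log^{1/1^\ast}(eR\sqrt s/d)\le C\big[\log^{1/1^\ast}(eR/d)+\log^{1/1^\ast}(e+s)\big]$; this then integrates against $s^{(Q-3)/2}e^{-s/C_1}$, which is finite near $0$ precisely because $Q>1$.
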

\begin{proof}
By \eqref{x4.1}, we have
that for almost every $x_0\in B$ and $p\in (\frac Q2,Q]\cap(1,Q]$,
\begin{eqnarray*}
\quad|Du(x_0)|^2&&\le C(1+ c_{\kz}(R^2)R^2)[R^{1-Q/p}\|g\|_{L^p(B/4)}]^2+
\int_0^{R^2}\frac{1}{t} \int_{B/4}|w_{x_0}(t,x)g(x)|p(t,x_0,x)\,d\mu(x)\,dt,
\end{eqnarray*}
where $w_{x_0}(t,x)=(u\psi)(x)-T_t(u\psi)(x_0)$

Let us first prove (i).
By Lemma \ref{l4.1}, we have that $\||Du|\|_{L^\fz(B\setminus \frac{3}8B)}\le C\|g\|_{L^Q(8B)}$.
Thus, assume $x_0\in \frac{3}{8}B$.

Now by the fact $T_t1=1$, we write
\begin{eqnarray*}
\int_{B/4} \lf|w_{x_0}(t,x)g(x)\r|p(t,x_0,x)\,d\mu(x)&&\le \int_{B/4} |u\psi(x)-u\psi(x_0)||g(x)|p(t,x_0,x)\,d\mu(x)\\
&&\hs\hs+\int_{B/4} |T_t(u\psi(x_0)-u\psi)(x_0)||g(x)|p(t,x_0,x)\,d\mu(x)\\
&&=:\mathrm{H}_{1}+\mathrm{H}_{2}.
\end{eqnarray*}

By Lemma \ref{l4.2} (i), we have
\begin{eqnarray*}
\mathrm{H}_{1}&&\le \int_{B/4} |u(x)-u(x_0)||g(x)|p(t,x_0,x)\,d\mu(x)\\
&&\le\int_{B/4} Cd(x_0,x)
\log^{1/1^\ast}\lf(\frac{eR}{d(x_0,x)}\r)\||Du|\|_{\Psi_{R,1^\ast}(B)}
|g(x)|p(t,x_0,x)\,d\mu(x)\\
&& \le C\||Du|\|_{\Psi_{R,1^\ast}(B)}\frac{\log^{1/1^\ast}\lf(\frac{eR^2}{t}\r)}{t^{(Q-1)/2}}
\int_{B/4}|g(x)|e^{-\frac{d(x,x_0)^2}{2C_1t}}\,d\mu(x).
\end{eqnarray*}
Notice that for $x\notin B/2$ and $x_0\in 3B/8$, we have $d(x,x_0)>R/8$.
For the term $\mathrm{H}_{2}$, by Lemma \ref{l4.2}(i) again, we have
\begin{eqnarray*}
&&|T_t(u\psi(x_0)-u\psi)(x_0)|\\
&&\hs\le \int_{X\setminus B/2} |u\psi(x)-u\psi(x_0)|p(t,x_0,x)\,d\mu(x)
+\int_{B/2}|u\psi(x)-u\psi(x_0)|p(t,x_0,x)\,d\mu(x)\\
&&\hs\le C\|u\|_{L^\fz(8 B)}e^{-R^2/ct}
\int_{X}p(lt,x_0,x)\,d\mu(x)\\
&&\hs\hs+\int_{B/2} Cd(x_0,x)
\log^{1/1^\ast}\lf(\frac{eR}{d(x_0,x)}\r)\||Du|\|_{\Psi_{R,1^\ast}(B)}p(t,x_0,x)\,d\mu(x)\\
&&\hs\le CR\|g\|_{L^{Q}(B/4)}\frac{t^{1/2}}{R}+
Ct^{1/2}\log^{1/1^\ast}\lf(\frac{eR^2}{t}\r)\||Du|\|_{\Psi_{R,1^\ast}(B)}\int_{X}p(lt,x_0,x)\,d\mu(x)\\
&&\hs\le C\lf[\|g\|_{L^{Q}(B/4)}+\||Du|\|_{\Psi_{R,1^\ast}(B)}\r]t^{1/2}
\log^{1/1^\ast}\lf(\frac{eR^2}{t}\r),
\end{eqnarray*}
where $l=\frac{C_2}{2C_1}$. By this estimate, we further obtain
\begin{eqnarray*}
\mathrm{H}_{2}&&\le C\lf[\|g\|_{L^{Q}(B/4)}+\||Du|\|_{\Psi_{R,1^\ast}(B)}\r]t^{1/2}
\log^{1/1^\ast}\lf(\frac{eR^2}{t}\r)
\int_{B/4} |g(x)|p(t,x_0,x)\,d\mu(x)\\
&& \le C\lf[\|g\|_{L^{Q}(B/4)}+\||Du|\|_{\Psi_{R,1^\ast}(B)}\r]
\frac{\log^{1/1^\ast}\lf(\frac{eR^2}{t}\r)}{t^{(Q-1)/2}}
\int_{B/4}|g(x)|e^{-\frac{d(x,x_0)^2}{2C_1t}}\,d\mu(x).
\end{eqnarray*}

Combining the estimates for $\mathrm{H}_{1}$ and $\mathrm{H}_{2}$, we conclude that
\begin{eqnarray*}
&&\int_{B/4} \lf|(u\psi)(x)-T_t(u\psi)(x_0)g(x)\r|p(t,x_0,x)\,d\mu(x)\\
&&\hs\le C\lf[\|g\|_{L^{Q}(B/4)}+\||Du|\|_{\Psi_{R,1^\ast}(B)}\r]
\frac{\log^{1/1^\ast}\lf(\frac{eR^2}{t}\r)}{t^{(Q-1)/2}}
\int_{B/4}|g(x)|e^{-\frac{d(x,x_0)^2}{2C_1t}}\,d\mu(x),
\end{eqnarray*}
and hence,
\begin{eqnarray*}
&&\int_0^{R^2}\frac{1}{t}\int_{B/4} \lf|(u\psi)(x)-T_t(u\psi)(x_0)g(x)\r|p(t,x_0,x)\,d\mu(x)\,dt\\
&&\hs\le C\lf[\|g\|_{L^{Q}(B/4)}+\||Du|\|_{\Psi_{R,1^\ast}(B)}\r]
\int_0^{R^2}\frac{\log^{1/1^\ast}\lf(\frac{eR^2}{t}\r)}{t^{(Q+1)/2}}\int_{B/4} |g(x)|e^{-\frac{d(x,x_0)^2}{2C_1t}}\,d\mu(x)\,dt\nonumber\\
&&\hs \le C\lf[\|g\|_{L^{Q}(B/4)}+\||Du|\|_{\Psi_{R,1^\ast}(B)}\r]\\
&&\hs\hs\times\liminf_{\dz\to 0^+}\int_{ B/4\setminus B(x_0,\dz)}\int_{\frac{d(x,x_0)^2}{R^2}}^\fz
|g(x)|\log^{1/1^\ast}\lf(\frac{eR^2s}{d(x_0,x)^2}\r)\lf(\frac{s}{d(x,x_0)^{2}}\r)^{\frac{Q-1}{2}}e^{-s}\frac{\,ds}{ s}\,d\mu(x)\nonumber\\
&&\hs\le C\lf[\|g\|_{L^{Q}(B/4)}+\||Du|\|_{\Psi_{R,1^\ast}(B)}\r]\int_{B/4}
\frac{\log^{1/1^\ast}\lf(\frac{eR}{d(x_0,x)}\r)|g(x)|}{d(x,x_0)^{Q-1}}\,d\mu(x).
\end{eqnarray*}
The desired estimate follows.

Using Lemma \ref{l4.2} (ii) instead of Lemma \ref{l4.2} (i) in the argument above, we
see that (ii) holds as well, proving the proposition.
\end{proof}

Now the main problem is reduced to estimating the Riesz potentials in Proposition \ref{p4.1}.
To this end, we establish the following boundedness of Riesz potentials.

Let $\az\in (0,Q)$ and  $\bz\in [0,\fz)$.
For a non-negative measurable function $f$ on $B_{R}(y_0)$ and $x\in B_{R}(y_0)$,
define its Riesz potential $\car_{\az,\bz}f$ by
\begin{equation*}
\car_{\az,\bz}f(x)=\int_{B_{R}(y_0)}
\frac{(\log \frac {eR}{d(x,y)})^\bz }{d(x,y)^{Q-\az}}f(y)\,d\mu(y).
\end{equation*}
It is easy to see that Riesz potential $\car_{\az,\bz}f$
is well defined for $f\in L^\fz(B)$.
Recall that $\cm$ denotes the Hardy-Littlewood maximal function
on $X$.

\begin{thm}\label{t4.2}
Let $Q\in (1,\fz)$, $\az\in (0,Q)$ and  $\bz\in [0,\fz)$. Then there exist $c,C>0$
such that for every $B_0=B_R(y_0)\subset X$ with $R< R_0$:

(i) for $p=Q/\az$,
\begin{eqnarray*}
\fint_{B_0}\exp\lf\{\frac{c\car_{\az,\bz}(|f|)}
{\|f\|_{L^{Q/\az}(B_0)}}\r\}^{\frac{Q}{Q(\bz+1)-\az}}\,d\mu\le C;
\end{eqnarray*}

(ii) for $\bz=0$ and $p\in (1,Q/\az)$,
\begin{eqnarray*}
\|\car_{\az,0}(f)\|_{L^{\frac{Qp}{Q-\az p}}(B_0)}\le C\|f\|_{L^p(B_0)}.
\end{eqnarray*}
\end{thm}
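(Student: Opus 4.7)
Both estimates are proven by a Hedberg-type truncation of the Riesz potential: for $x\in B_0$ and $\dz\in(0,R]$, split
\begin{equation*}
\car_{\az,\bz}(|f|)(x)=\int_{B_0\cap B(x,\dz)}+\int_{B_0\setminus B(x,\dz)}=:I_\dz(x)+II_\dz(x),
\end{equation*}
and combine the resulting pointwise estimates with the $L^p$-boundedness of the Hardy--Littlewood maximal operator $\cm$, which holds on $(X,d,\mu)$ since local Ahlfors regularity implies local doubling. A dyadic decomposition into annuli $A_j=B(x,2^{-j}\dz)\setminus B(x,2^{-j-1}\dz)$, together with Ahlfors regularity and $\int_{A_j}|f|\,d\mu\le C(2^{-j}\dz)^Q\cm f(x)$, yields
\begin{equation*}
I_\dz(x)\le C\dz^\az\lf(\log\frac{eR}{\dz}\r)^\bz\cm f(x),
\end{equation*}
the polynomial growth in $j$ of $\log(eR/(2^{-j}\dz))$ being absorbed against the geometric factor $(2^{-j}\dz)^\az$. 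For $II_\dz(x)$ one applies H\"older's inequality with conjugate exponents $p,p'$ and evaluates $\int_{B_0\setminus B(x,\dz)}d(x,y)^{(\az-Q)p'}(\log(eR/d(x,y)))^{\bz p'}\,d\mu(y)$ by the analogous outward dyadic decomposition.

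For (ii), when $\bz=0$ and $p\in(1,Q/\az)$, the condition $(Q-\az)p'>Q$ makes the outward sum geometric and gives $II_\dz(x)\le C\dz^{\az-Q/p}\|f\|_{L^p(B_0)}$. Optimizing $\dz$ in $I_\dz+II_\dz$ produces the pointwise Hedberg-type bound
\begin{equation*}
\car_{\az,0}(|f|)(x)\le C(\cm f(x))^{1-\az p/Q}\|f\|_{L^p(B_0)}^{\az p/Q};
\end{equation*}
raising to the power $Qp/(Q-\az p)$ and integrating over $B_0$, the identity $(1-\az p/Q)\cdot Qp/(Q-\az p)=p$ together with $\|\cm f\|_{L^p(X)}\ls\|f\|_{L^p(X)}$ deliver (ii).

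For (i), the critical case $p=Q/\az$ forces $(Q-\az)p'=Q$, and now the outward dyadic sum accumulates a logarithm: over $\sim\log_2(R/\dz)$ annuli, each weighted by $(\log(eR/(2^{-j}\dz)))^{\bz p'}$, one obtains $\sim(\log(eR/\dz))^{\bz p'+1}$, so
\begin{equation*}
II_\dz(x)\le C\lf(\log\frac{eR}{\dz}\r)^{\bz+(Q-\az)/Q}\|f\|_{L^{Q/\az}(B_0)}.
\end{equation*}
Setting $N:=\|f\|_{L^{Q/\az}(B_0)}$ and choosing $\dz^\az=N/\cm f(x)$ (capped at $\dz=R$) balances the two parts and gives
\begin{equation*}
\car_{\az,\bz}(|f|)(x)\le CN\lf[\log\lf(e+\frac{R^\az\cm f(x)}{N}\r)\r]^{\bz+(Q-\az)/Q}.
\end{equation*}
Since $\bz+(Q-\az)/Q=(Q(\bz+1)-\az)/Q$, exponentiating yields
\begin{equation*}
\exp\lf\{\frac{c\car_{\az,\bz}(|f|)(x)}{N}\r\}^{\frac{Q}{Q(\bz+1)-\az}}\le C\lf(1+\frac{R^\az\cm f(x)}{N}\r).
\end{equation*}
Averaging over $B_0$ and bounding $\fint_{B_0}\cm f\,d\mu$ via H\"older's inequality with exponents $Q/\az$ and $Q/(Q-\az)$, combined with $\|\cm f\|_{L^{Q/\az}(X)}\ls\|f\|_{L^{Q/\az}(B_0)}$ and $\mu(B_0)\sim R^Q$, one gets $\fint_{B_0}\cm f\,d\mu\le CR^{-\az}N$, so the averaged right-hand side is bounded by a constant, proving (i).

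The main delicate step is the precise computation of the logarithmic exponent in $II_\dz$ for the critical case; the exponent must come out exactly as $\bz+(Q-\az)/Q$ so that the final Orlicz exponent matches the target $Q/(Q(\bz+1)-\az)$. Everything else is a bookkeeping matter: Ahlfors regularity on every relevant ball (which is guaranteed because $R<R_0$), uniform constants in the annular summation, and the standard maximal-function bounds on $(X,d,\mu)$.
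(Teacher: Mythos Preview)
Your proposal is correct and follows essentially the same route as the paper: the same Hedberg-type split at a scale $\dz$, the same dyadic annulus estimates yielding $I_\dz\ls\dz^\az(\log(eR/\dz))^\bz\cm f$ and, in the critical case, $II_\dz\ls(\log(eR/\dz))^{\bz+(Q-\az)/Q}\|f\|_{L^{Q/\az}}$, followed by the same optimization $\dz^\az\sim N/\cm f(x)$ and the same concluding H\"older plus maximal-function step. Your treatment of (ii) is in fact more explicit than the paper's, which simply declares that case to follow similarly.
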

\begin{proof} Let us prove (i). Let $\phi(r)=r^{\az-Q}(\log \frac {eR}{r})^\bz$.
For $r\in (0,2R)$, write
\begin{eqnarray*}
\car_{\az,\bz}f(x)&&=\int_{B_0\cap B_r(x)}
\phi(d(x,y))f(y)\,d\mu(y)+\int_{B_0\setminus B_r(x)}
\phi(d(x,y))f(y)\,d\mu(y).
\end{eqnarray*}
In what follows, for a ball $B=B_\ro(z)$ and $k\in\zz$,
let $U_k(B):=B_{2^k\ro}(z)\setminus B_{2^{k-1}\ro}(z)$.

If $\az\in (0,Q)$, then
\begin{eqnarray*}
\int_{B_0\cap B_r(x)}
\phi(d(x,y))f(y)\,d\mu(y)&&\le \sum_{k\le 0}\int_{U_k(B_r(x))}
\phi(d(x,y))f(y)\,d\mu(y)\nonumber\\
&&\le \sum_{k\le 0}(2^kr)^{\az-Q}\lf(\log \frac {eR}{2^kr}\r)^\bz
\int_{U_k(B_r(x))}f(y)\,d\mu(y)\nonumber\\
&&\le C\sum_{k\le 0}(2^kr)^{\az}\lf(|k|\log \frac {eR}{r}\r)^\bz
\fint_{B_{2^kr}(x)}f(y)\,d\mu(y)\nonumber\\
&&\le Cr^\az\lf(\log \frac {eR}{r}\r)^\bz \cm(f)(x).
\end{eqnarray*}
On the other hand, by the H\"older inequality, we obtain
\begin{eqnarray*}
&&\int_{B_0\setminus B_r(x)}\phi(d(x,y))f(y)\,d\mu(y)\nonumber\\&&
\le \|f\|_{L^{Q/\az}(B_0)}\lf\{\int_{B_0\setminus B_r(x)}
{d(x,y)^{-Q}}\lf(\log \frac {eR}{d(x,y)}\r)^{\frac{\bz Q}{Q-\az}}
\,d\mu(y)\r\}^{\frac{Q-\az}{Q}}\nonumber\\
&&\le \|f\|_{L^{Q/\az}(B_0)}\lf\{\sum_{1\le k\le 2\log_2 R/r}\int_{U_k( B_r(x))}
{(2^kr)^{-Q}}\lf(\log \frac {eR}{2^kr}\r)^{\frac{\bz Q}{Q-\az}}
\,d\mu(y)\r\}^{\frac{Q-\az}{Q}}\nonumber\\
&&\le C\|f\|_{L^{Q/\az}(B_0)}\lf(\log \frac {eR}{r}\r)^{\bz+\frac{Q-\az}{ Q}}.
\end{eqnarray*}
By letting $r^\az=\min\{R^\az,\frac{\|f\|_{L^{Q/\az}(B_0)}}{\cm(f)(x)}\}$, we obtain
that
\begin{eqnarray*}
\car_{\az,\bz}f(x)&&\le C\|f\|_{L^{Q/\az}(B_0)}
\max\lf\{1,\lf(\log \frac {eR^\az \cm(f)(x)}{\|f\|_{L^{Q/\az}(B_0)}}\r)^{\bz+\frac{Q-\az}{ Q}}\r\}.
\end{eqnarray*}
Hence, by the H\"older inequality, we obtain
\begin{eqnarray*}
\fint_B\exp\lf\{\frac{c\car_{\az,\bz}(|f|)}
{\|f\|_{L^{Q/\az}(B_0)}}\r\}^{\frac{Q}{Q(\bz+1)-\az}}\,d\mu&&\le
C\fint_B\frac {eR^\az \cm(f)(x)}{\|f\|_{L^{Q/\az}(B_0)}}\,d\mu\\
&&\le \frac{C}{\mu(B)\|f\|_{L^{Q/\az}(B_0)}}R^\az\mu(B)^{\frac{Q-\az}{Q}}
\|\cm(f)\|_{L^{Q/\az}(B)}\le C,
\end{eqnarray*}
proving (i).

The case (ii) follows similarly, the theorem is proved.
\end{proof}

As an application of the mapping properties of the Riesz potential, we obtain the
main result of this section.
\begin{proof}[{\bf Proof of Theorem  \ref{t4.1}}]
By Proposition \ref{p4.1}, we have that for almost every $x_0\in B$,
\begin{eqnarray*}
|Du(x_0)|^2&&\le C(1+ c_{\kz}(R^2)R^2)\|g\|_{L^Q(B/4)}^2\\
&&\hs+ C\lf[\|g\|_{L^{Q}(B/4)}+\||Du|\|_{\Psi_{R,1^\ast}(B)}\r]\int_{B/4}
\frac{\log^{1/1^\ast}\lf(\frac{eR}{d(x_0,x)}\r)|g(x)|}{d(x,x_0)^{Q-1}}\,d\mu(x).
\end{eqnarray*}
Recall that for $R,\gz>0$, $\Psi_{R,\gz}(t)=\frac{1}{R^Q}(e^{t^\gz}-1)$.
  By Theorem \ref{t4.2} with $\az=1$ and $\bz=1/1^\ast$, we see that
$$G(x_0):=\int_{B/4}
\frac{\log^{1/1^\ast}\lf(\frac{eR}{d(x_0,x)}\r)|g(x)|}{d(x,x_0)^{Q-1}}\,d\mu(x)
\in \Psi_{R,1^\ast/2}(B),$$
with
\begin{eqnarray*}
\fint_B\bigg[\exp\lf\{\frac{G(x_0)}
{C\|g\|_{L^{Q}(B/4)}}\r\}^{\frac{Q}{2(Q-1)}}-1\bigg]\,d\mu(x_0)\le 1.
\end{eqnarray*}
Thus, we deduce that
\begin{eqnarray*}
&&\fint_B\bigg[\exp\bigg\{\frac{|Du(x_0)|^2}
{ C(1+ c_{\kz}(R^2)R^2)\|g\|_{L^Q(B/4)}^2+C\|g\|_{L^Q(B/4)}\||Du|\|_{\Psi_{R,1^\ast}(B)}}
\bigg\}^{\frac{Q}{2(Q-1)}}-1\bigg]\,d\mu(x_0)\\
&&\hs\le \fint_B\bigg[\exp\bigg\{1+\frac{G(x_0)}{C\|g\|_{L^Q(B/4)}}
\bigg\}^{\frac{Q}{2(Q-1)}}-1\bigg]\,d\mu(x_0)\le 1,
\end{eqnarray*}
which implies that
\begin{eqnarray*}
\||Du|\|_{\Psi_{R,1^\ast}(B)}^2&&\le C(1+ c_{\kz}(R^2)R^2)\|g\|_{L^Q(B/4)}^2+
C\|g\|_{L^Q(B/4)}\||Du|\|_{\Psi_{R,1^\ast}(B)}\\
&&\le C(1+ c_{\kz}(R^2)R^2)\|g\|_{L^Q(B/4)}^2+\frac{1}{2}\||Du|\|^2_{\Psi_{R,1^\ast}(B)},
\end{eqnarray*}
and hence,
\begin{eqnarray*}
\fint_B \exp \bigg\{\frac{|Du(x_0)|}
{c(1+ \sqrt {c_{\kz}(R^2)}R)\|g\|_{L^Q(B/4)}}\bigg\}^{\frac{Q}{Q-1}}\,d\mu(x_0)&&\le C,
\end{eqnarray*}
proving (i).

Now for $p\in (\frac Q2,Q)\cap (1,Q)$, by Proposition \ref{p4.1},
we have that for almost every $x_0\in B$,
\begin{eqnarray*}
|Du(x_0)|^2&&\le C(1+ c_{\kz}(R^2)R^2)[R^{1-Q/p}\|g\|_{L^p(B/4)}]^2\\
&&\hs+ C\lf[\|g\|_{L^{p}(B/4)}+\||Du|\|_{L^{p^\ast}(B)}\r]\int_{B/4}
\frac{|g(x)|}{d(x,x_0)^{Q-2+Q/p}}\,d\mu(x).
\end{eqnarray*}
According to Theorem \ref{t4.2} (ii), we have that
$$\wz G(x_0):=\int_{B/4}
\frac{|g(x)|}{d(x,x_0)^{Q-2+Q/p}}\,d\mu(x)
\in L^{\frac{Qp}{2(Q-p)}}(B),$$
which implies that
\begin{eqnarray*}
\||Du|^2\|_{L^{\frac{Qp}{2(Q-p)}}(B)}&&\le C(1+ c_{\kz}(R^2)R^2)[R^{1-Q/p}\|g\|_{L^p(B/4)}]^2\mu(B)^{\frac{2(Q-p)}{Qp}}\\
&&\hs+ C\lf[\|g\|_{L^{p}(B/4)}+\||Du|\|_{L^{p^\ast}(B)}\r]\|\wz G\|_{L^{\frac{Qp}{2(Q-p)}}(B)}\\
&&\le C(1+ c_{\kz}(R^2)R^2)\|g\|_{L^p(B/4)}^2+C\lf[\|g\|_{L^{p}(B/4)}+\||Du|\|_{L^{p^\ast}(B)}\r]\|g\|_{L^p(8B)}\\
&&\le C(1+ c_{\kz}(R^2)R^2)\|g\|_{L^p(B/4)}^2+\frac 12\||Du|\|_{L^{p^\ast}(B)}^2.
\end{eqnarray*}
Thus, we obtain that $\||Du|\|_{L^{p^\ast}(B)}\le C(1+ \sqrt {c_{\kz}(R^2)}R)\|g\|_{L^p(B/4)}$,
proving the theorem.
\end{proof}

\section{Proofs of the main results}
\hskip\parindent In this section, we prove the main results of this paper.
By Theorem \ref{t4.1}, our proofs of Theorem \ref{t1.1} and Theorem \ref{t1.2}
are reduced to approximation arguments and use of Cheeger-harmonic functions.

We first prove Theorem \ref{t1.1}.
\begin{proof}[{\bf Proof of Theorem \ref{t1.1}}]
For each $k\in \cn$, let $g_k=g\chi_{8B\cap \{|g|\le k \}}$.
Then, by Lemma \ref{l2.6}, there exist $u_k\in H^{1,2}_0(256B)$ such that
$\Delta u_k=g_k$ in $256B$. By Theorem \ref{t4.1},
we obtain
\begin{eqnarray*}
\fint_{32B} \exp \bigg\{\frac{|D u_k(x_0)|}
{c(1+ \sqrt {c_{\kz}(R^2)}R)\|g_k\|_{L^Q(8B)}}\bigg\}^{\frac{Q}{Q-1}}\,d\mu(x_0)&&\le C.
\end{eqnarray*}
Moreover, By Lemma \ref{l2.5} and the Sobolev inequality, we have
\begin{eqnarray*}
\|u_k-u_j\|_{L^2(256 B)}+\||D(u_k-u_j)|\|_{L^2(256B)}&&\le C_R\|g_k-g_j\|_{L^Q(8 B)}\to 0,
\end{eqnarray*}
as $k,j\to\fz$. Hence $\{u_k\}_{k\in\cn}$ is a Cauchy sequence in $H^{1,2}_0(256 B)$,
and there exists $\wz u\in H^{1,2}_0(256B)$ such that $\lim_{k\to\fz}u_k=\wz u$ in
$H^{1,2}_0(256B)$ and $\Delta \wz u=g\chi_{8B}$ in $256B$.
By Theorem \ref{t4.1} (i) again, we further deduce that
\begin{equation*}
\||Du_k-Du_j|\|_{\Psi_{32R,1^\ast}(32B)}\le
C(1+ \sqrt {c_{\kz}(R^2)}R)\|g_k-g_j\|_{L^{Q}(8B)}\to 0
\end{equation*}
as $k,j\to\fz$, which implies that
\begin{equation}\label{x5.1}
\||D\wz u|\|_{\Psi_{32R,1^\ast}(32B)}\le
C(1+ \sqrt {c_{\kz}(R^2)}R)\|g\|_{L^{Q}(8B)}.
\end{equation}

On the other hand, since
\begin{equation*}
\int_{8B} D\wz u(x)\cdot D\phi(x)\,d\mu(x)=-\int_{8B}
 g(x)\phi(x)\,d\mu(x)=\int_{8 B}Du(x)\cdot D\phi(x)\,d\mu(x),
\ \ \forall \phi\in H_0^{1,2}(8 B),
\end{equation*}
we see that $u-\wz u$ is Cheeger-harmonic in $8B$.
By \cite{krs} or Theorem \ref{t3.1} with $g=0$, we have
\begin{eqnarray*}
\||D(u-\wz u)|\|_{L^\fz(B)}&&\le C(1+ \sqrt {c_{\kz}(R^2)}R)\frac{\|u-\wz u\|_{L^2(8 B)}}{R^{Q/2+1}}
\le C(1+ \sqrt {c_{\kz}(R^2)}R)\lf(\frac{\|u\|_{L^2(8B)}}{R^{Q/2+1}}+\|g\|_{L^{Q}(8B)}\r),
\end{eqnarray*}
which together with \eqref{x5.1} implies that
\begin{eqnarray*}
\fint_{B} \exp \bigg\{\frac{|D u(x_0)|}
{c(1+ \sqrt {c_{\kz}(R^2)}R)C(u,g)}\bigg\}^{\frac{Q}{Q-1}}\,d\mu(x_0)&&\le C,
\end{eqnarray*}
where $C(u,g)=\frac{\|u\|_{L^2(8B)}}{R^{Q/2+1}}+\|g\|_{L^{Q}(8B)}$, completing the proof
of Theorem \ref{t1.1}.
\end{proof}

Observe that in Theorem \ref{t4.1}, the range of $p$ lies in $(\frac Q2,Q)\cap (1,Q)$.
Thus, to obtain the results for all $p\in (2_\ast,Q)\cap (1,Q)$, we need some extra
estimates. Notice that $(\frac Q2,Q)\cap (1,Q) \neq (2_\ast,Q)\cap (1,Q)$ only for $Q>2$.

We want to
use the interpolation theory to study the case of $p\in (2_\ast,\frac Q2]$ when $Q>2$.
To this end, let us recall the
nonincreasing rearrangement function.
For a measurable function $f$, let $\sz_f$ denote its distribution function; then
its nonincreasing rearrangement function, $f^\ast$, is defined by letting for all $t>0$,
$f^\ast(t)=\inf\{s:\, \sz_f(s)\le t\}.$

We also need the following Hardy's inequalities; see \cite[p.196]{sw}.
\begin{lem}\label{l5.1}
Let $q\ge 1$ $r>0$ and $g$ be a nonnegative function defined on $(0,\fz)$.
Then

(i) $(\int_0^\fz[\int_0^tg(u)\,du]^qt^{-r-1}\,dt)^{1/q}\le
(q/r)(\int_0^\fz[ug(u)]^qu^{-r-1}\,du)^{1/q};$

(ii) $(\int_0^\fz[\int_t^\fz g(u)\,du]^qt^{r-1}\,dt)^{1/q}\le
(q/r)(\int_0^\fz[ug(u)]^qu^{r-1}\,du)^{1/q}.$
\end{lem}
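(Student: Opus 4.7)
The plan is to derive both inequalities from Minkowski's integral inequality after a rescaling substitution that converts the inner integral into one over a fixed range, so that the weighted $L^q$-norm can be pulled inside.

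For (i), I would first substitute $u=ts$ (with $s\in(0,1)$) in the inner integral to write $\int_0^t g(u)\,du = t\int_0^1 g(ts)\,ds$. The left-hand side then equals
$$\lf\|t^{(q-r-1)/q}\int_0^1 g(ts)\,ds\r\|_{L^q((0,\fz),\,dt)}.$$
Minkowski's integral inequality lets me pull the $L^q_t$-norm inside the $s$-integral, bounding this by $\int_0^1 \lf(\int_0^\fz t^{q-r-1}g(ts)^q\,dt\r)^{1/q}\,ds$. A second change of variables $u=ts$ in the inner integral produces a scaling factor $s^{(r-q)/q}$ that is independent of $g$, leaving precisely the target weighted norm $\lf(\int_0^\fz[ug(u)]^q u^{-r-1}\,du\r)^{1/q}$. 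The remaining $\int_0^1 s^{(r-q)/q}\,ds$ evaluates to $q/r$ for any $r>0$, which yields (i).

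For (ii), the symmetric argument works with $s\in(1,\fz)$: substitute $u=ts$ to write $\int_t^\fz g(u)\,du = t\int_1^\fz g(ts)\,ds$, apply Minkowski, and perform the same scaling change of variables. The resulting factor is $\int_1^\fz s^{-(r+q)/q}\,ds$, which likewise equals $q/r$ for $r>0$.

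The main obstacle, modest as it is, amounts to bookkeeping on the exponents: one must verify that $\int_0^1 s^{(r-q)/q}\,ds$ and $\int_1^\fz s^{-(r+q)/q}\,ds$ converge, and the hypothesis $r>0$ is precisely what makes this work. An alternative route is by duality, pairing the left-hand side against an $L^{q'}$ test function and applying Fubini together with H\"older, which would ultimately reduce to the same one-dimensional weighted estimate. Since the result is classical (Hardy-Littlewood-P\'olya), no new ideas beyond Minkowski are needed, and the constant $q/r$ obtained is sharp.
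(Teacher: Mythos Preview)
Your proof is correct; the scaling substitution $u=ts$ followed by Minkowski's integral inequality is the standard derivation, and your computation of the constants $\int_0^1 s^{(r-q)/q}\,ds=\int_1^\infty s^{-(r+q)/q}\,ds=q/r$ (valid precisely when $r>0$) is accurate. The paper does not actually prove this lemma---it simply cites \cite[p.~196]{sw}---so there is no alternative argument to compare yours with.
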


\begin{prop}\label{p5.1}
Let $Q>2$ and $p\in (2_\ast,\frac Q2]$. Suppose that
$u\in H^{1,2}_0(256B)$, $g\in L^\fz(X)$ with $\supp g\subset 8B$,
and $\Delta u=g$ in $256B$,
where $B=B_R(y_0)$ with $256B\subset\subset \Omega$.
Then $|Du|\in L^{p^\ast}(32B)$ with
$$\||Du|\|_{L^{p^\ast}(32B)}\le C(1+\sqrt{c_{\kz}(R^2)}R)\|g\|_{L^p(8B)}.$$
\end{prop}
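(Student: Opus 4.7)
I would prove the proposition by real interpolation between two endpoint estimates already available in the paper. Regard $T\colon g\mapsto |Du|$, where $u\in H^{1,2}_0(256B)$ is the unique solution of $\Delta u=g$ in $256B$ supplied by Lemma \ref{l2.6} and Lemma \ref{l2.5}, as a sublinear operator (sublinearity follows from the linearity of $g\mapsto u$ combined with $|D(u_1+u_2)|\le|Du_1|+|Du_2|$). The lower endpoint is furnished by Lemma \ref{l2.5}(i) applied on the ball $256B$: since $\supp g\subset 8B$,
\[
\|Tg\|_{L^2(32B)}\le \|Tg\|_{L^2(256B)}\le C\|g\|_{L^{2_\ast}(8B)},
\]
so $T$ is of strong type $(2_\ast,2)$. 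For the upper endpoint, fix any $p_1\in(Q/2,Q)\cap(1,Q)$ and apply Theorem \ref{t4.1}(ii) to the ball $B':=32B$ (so that $B'/4=8B$ and $8B'=256B$); this gives
\[
\|Tg\|_{L^{p_1^\ast}(32B)}\le C\bigl(1+\sqrt{c_\kz(R^2)}\,R\bigr)\|g\|_{L^{p_1}(8B)},
\]
the factor arising from $c_\kz((32R)^2)\ge c_\kz(R^2)$ being absorbed into $C$ using monotonicity of $c_\kz$.

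A direct computation with $1/p^\ast=1/p-1/Q$ shows that if $1/p=(1-\theta)/2_\ast+\theta/p_1$ with $\theta\in(0,1)$, then $1/p^\ast=(1-\theta)/2+\theta/p_1^\ast$, so the scale $p\mapsto p^\ast$ is preserved along linear interpolation between the two endpoints. Given any target $p\in(2_\ast,Q/2]$, choose $p_1\in(Q/2,Q)\cap(1,Q)$ slightly larger than $Q/2$ so that $p<p_1$; the corresponding $\theta$ lies in $(0,1)$. The Marcinkiewicz interpolation theorem for sublinear operators then yields
\[
\|Tg\|_{L^{p^\ast}(32B)}\le C\bigl(1+\sqrt{c_\kz(R^2)}\,R\bigr)^\theta\|g\|_{L^p(8B)}\le C\bigl(1+\sqrt{c_\kz(R^2)}\,R\bigr)\|g\|_{L^p(8B)},
\]
which is the desired bound.

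The heart of the argument is the Marcinkiewicz step, which I would execute directly rather than invoke as a black box. For each $\lambda>0$ split $g=g_\lambda^h+g_\lambda^\ell$ with $g_\lambda^h=g\chi_{\{|g|>\lambda\}}$, solve the two associated Poisson equations to write $Du=Du^h+Du^\ell$, and use $|Du|\le|Du^h|+|Du^\ell|$ together with Chebyshev to obtain $(Tg)^\ast(t)\ls(Tg_\lambda^h)^\ast(t/2)+(Tg_\lambda^\ell)^\ast(t/2)$; the two endpoint estimates control these terms by $\|g_\lambda^h\|_{L^{2_\ast}(8B)}$ and $\|g_\lambda^\ell\|_{L^{p_1}(8B)}$ respectively. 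Choosing $\lambda=g^\ast(s(t))$ so as to balance the two contributions and inserting the resulting bound into $\|Tg\|_{L^{p^\ast}(32B)}^{p^\ast}\sim\int_0^\infty t^{p^\ast-1}\sigma_{Tg}(t)\,dt$ produces two iterated integrals in the rearrangement $g^\ast$; here the two Hardy inequalities of Lemma \ref{l5.1} (one from $0$ to $t$, one from $t$ to $\infty$, corresponding to the ``low'' and ``high'' parts of the truncation) close the estimate in $\|g\|_{L^p(8B)}$. The main technical obstacle is precisely this bookkeeping---matching the exponents $r,q$ in Lemma \ref{l5.1} to the interpolation parameters so that the right-hand side is exactly $\|g\|_{L^p(8B)}$---while the curvature factor $(1+\sqrt{c_\kz(R^2)}\,R)$, entering only at the upper endpoint and being $\ge1$, is automatically preserved by interpolation since it appears only to the power $\theta<1$.
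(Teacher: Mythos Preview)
Your approach is essentially identical to the paper's: both interpolate between the endpoint $T:L^{2_\ast}\to L^2$ from Lemma \ref{l2.5}(i) and $T:L^{q}\to L^{q^\ast}$ from Theorem \ref{t4.1}(ii) applied to the ball $32B$, split $g$ at the level $g^\ast(t)$, bound the rearrangement of $|Du|\chi_{32B}$ by the two resulting pieces, and close with the Hardy inequalities of Lemma \ref{l5.1}. One small slip: your remark that the factor $c_\kz((32R)^2)$ can be ``absorbed into $C$ using monotonicity of $c_\kz$'' is backwards---monotonicity gives $c_\kz((32R)^2)\ge c_\kz(R^2)$, not the reverse---but the paper makes the same tacit identification, so the discrepancy is cosmetic rather than a gap in your argument.
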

\begin{proof} For $t>0$, define
\begin{equation*}
\begin{array}[C]{l}
g^t(x):={\lf\{
\begin{array}{ll}
g(x)\quad\quad &\quad\mbox{if} \quad |g(x)|>g^\ast(t);\\
0 &\quad{\rm if} \quad  |g(x)|\le g^\ast (t)
\end{array}
\r.}
\end{array}
\end{equation*}
and $g_t:=g-g^t$. We then have
\begin{equation*}
\begin{array}[C]{l}
(g^t)^\ast(s)\le {\lf\{
\begin{array}{ll}
g^\ast(s)\quad\quad &\quad\mbox{if} \quad s\in (0,t);\\
 0 &\quad{\rm if} \quad  s\ge t
\end{array}
\r.}
\end{array}\quad\mathrm{ and }
\end{equation*}
\begin{equation*}
\begin{array}[C]{l}
(g_t)^\ast(s)\le {\lf\{
\begin{array}{ll}
g^\ast(t)\quad\quad &\quad\mbox{if} \quad s\in (0,t);\\
 g^\ast(s) &\quad{\rm if} \quad  s\ge t.
\end{array}
\r.}
\end{array}\quad{\null}
\end{equation*}
Notice here that, for $t\ge \mu(8B)$, $g^t=g$ and $g_t=0.$

Let $G$ be the Green function on $256B$ such that for each $h\in L^\fz(256B)$,
$v:=\int_{256B}Gh\,d\mu\in H^{1,2}_0(256B)$ and $\Delta v=h$ in $256B$;
see \cite{bm}.
Write
$$u=\int_{256B}Gg\,d\mu=\int_{256B}Gg^t\,d\mu+\int_{256B}Gg_t\,d\mu=:u_1+u_2.$$
Fix a $q\in (\frac Q2,Q)$. By using Theorem \ref{t4.1} (ii) and Lemma \ref{l2.5}, we obtain
\begin{eqnarray*}
&&\||Du|\|_{L^{p^\ast}(32B)}\\
&&\hs\le \||Du_1|+|Du_2|\|_{L^{p^\ast}(32B)}\\
&&\hs\le C\lf(\int_0^{\fz} \lf[|Du_1\chi_{32B}|^\ast(t)
+|Du_2\chi_{32B}|^\ast(t)\r]^{p^\ast}\,dt\r)^{1/p^\ast}\\
&&\hs\le C\lf(\int_0^{\fz} \lf[t^{-\frac 12}\|g^t\|_{L^{2_\ast}(8B)}\r]^{p^\ast}\,dt\r)^{1/p^\ast}+
C(1+\sqrt{c_{\kz}(R^2)}R)\lf(\int_0^{\fz} \lf[t^{-\frac{1}{q^\ast}}\|g_t\|_{L^q(8B)}\r]^{p^\ast}\,dt\r)^{1/p^\ast}\\
&&\hs=: \mathrm {H}_1+\mathrm {H}_2.
\end{eqnarray*}

By the assumption that $p^\ast>2$ and Hardy's inequality (Lemma \ref{l5.1}(i)), we obtain
\begin{eqnarray*}
 \mathrm {H}_1&&\le
 C\lf(\int_0^\fz t^{-\frac {p^\ast}2}\lf(\int_0^t[g^\ast(s)]^{2_\ast}
 \,ds\r)^{p^\ast/2_\ast}\,dt\r)^{1/p^\ast}\\
 &&\le C\lf(\int_0^\fz t^{-\frac {p^\ast}2}[t^{\frac{1}{2_\ast}}
 g^\ast(t)]^{p^\ast}\,dt\r)^{1/p^\ast}
 \le C\lf(\int_0^\fz [t^{\frac{1}{2_\ast}-\frac 12+\frac 1{p^\ast}}
 g^\ast(t)]^{p^\ast}\frac{\,dt}{t}\r)^{1/p^\ast}\\
 &&\le C\lf(\int_0^\fz [t^{\frac 1p}
 g^\ast(t)]^{p^\ast}\,dt\r)^{1/p^\ast}\le C\|g\|_{L^p(8B)}.
\end{eqnarray*}
Similarly, we have $\mathrm {H}_2\le C(1+\sqrt{c_{\kz}(R^2)}R)\|g\|_{L^p(8B)}$ (see \cite{sw}),
and the desired estimate follows, proving the proposition.
\end{proof}

We now are in position to prove Theorem \ref{t1.2}. The proof is similar to that
of Theorem \ref{t1.1}. We give it for completeness.
\begin{proof}[{\bf Proof of Theorem \ref{t1.2}}]
For each $k\in \cn$, let $g_k=g\chi_{8B\cap \{|g|\le k \}}$.
Then there exists $u_k\in H^{1,2}_0(256B)$ such that
$\Delta u_k=g_k$ in $256B$. By Theorem \ref{t4.1} (ii) and Proposition \ref{p5.1},
we obtain that for all $p\in (2_\ast,Q)\cap (1,Q)$,
\begin{eqnarray*}
\||D u_k|\|_{L^{p^\ast}(32B)}\le C(1+ \sqrt {c_{\kz}(R^2)}R)\|g_k\|_{L^p(8B)}.
\end{eqnarray*}
By Lemma \ref{l2.5} and the Sobolev inequality, we have
\begin{eqnarray*}
\|u_k-u_j\|_{L^2(256 B)}+\||D(u_k-u_j)|\|_{L^2(256B)}&&\le C_R\|g_k-g_j\|_{L^p(8 B)}\to 0,
\end{eqnarray*}
as $k,j\to\fz$. Hence $\{u_k\}_{k\in\cn}$ is a Cauchy sequence in $H^{1,2}_0(256 B)$,
and there exists $\wz u\in H^{1,2}_0(256B)$ such that $\lim_{k\to\fz}u_k=\wz u$ in
$H^{1,2}_0(256B)$ and $\Delta \wz u=g\chi_{8B}$ in $256B$.
By Theorem \ref{t4.1} (ii) and Proposition \ref{p5.1} again, we further deduce that
\begin{equation*}
\||Du_k-Du_j|\|_{L^{p^\ast}(32B)}\le
C(1+ \sqrt {c_{\kz}(R^2)}R)\|g_k-g_j\|_{L^{p}(8B)}\to 0
\end{equation*}
as $k,j\to\fz$, which implies that
\begin{equation}\label{5.2}
\||D\wz u|\|_{L^{p^\ast}(32B)}\le
C(1+ \sqrt {c_{\kz}(R^2)}R)\|g\|_{L^{p}(8B)}.
\end{equation}

By the fact that $\Delta \wz u=g\chi_{8B}$ in $256B$, we deduce that
\begin{equation*}
\int_{8B} D\wz u\cdot D\phi\,d\mu=-\int_{8B} g\phi\,d\mu=\int_{8B}  Du\cdot D\phi\,d\mu,
\ \ \forall \phi\in H_0^{1,2}(8B),
\end{equation*}
which implies that $u-\wz u$ is Cheeger-harmonic in $8B$.
By \cite{krs} or Theorem \ref{t3.1} with $g=0$, we have
\begin{eqnarray*}
\||D(u-\wz u)|\|_{L^\fz(B)}&&\le C(1+ \sqrt {c_{\kz}(R^2)}R)\frac{\|u-\wz u\|_{L^2(8 B)}}{R^{Q/2+1}}\\
&&\le C(1+ \sqrt {c_{\kz}(R^2)}R)\lf(\frac{\|u\|_{L^2(8B)}}{R^{Q/2+1}}+R^{1-Q/p}\|g\|_{L^{p}(8B)}\r),
\end{eqnarray*}
which together with \eqref{5.2} implies that
\begin{eqnarray*}
\lf(\fint_B|Du|^{p^\ast}\,d\mu\r)^{1/p^\ast}&&\le
C\||D(u-\wz u)|\|_{L^\fz(B)}+
C(1+ \sqrt {c_{\kz}(R^2)}R)\mu(B)^{-1/p^\ast}\|g\|_{L^{p}(8B)}\\
&&\le C(1+\sqrt{c_{\kz}(R^2)}R)\lf\{R^{-1}\lf(\fint_{8B}|u|^2\,d\mu\r)^{1/2}
+R\lf(\fint_{8B}|g|^p\,d\mu\r)^{1/p}\r\}.
\end{eqnarray*}
This completes the proof of Theorem \ref{t1.2}.
\end{proof}

At last, we use Theorem \ref{t1.2} to prove the H\"older continuity of solutions
to Poisson equations.
\begin{proof}[{\bf Proof of Corollary \ref{c1.1}}]
For almost all $x,y\in B=B_r(y_0)$ with $256B\subset\subset \Omega$,
by Theorem \ref{t1.2} and the Poincar\'e inequality, similarly to the ``telescope" approach in
Lemma \ref{l4.2}, we have that for almost all $x,y\in B$,
\begin{eqnarray*}
|u(x)-u(y)|\le Cd(x,y)^{2-Q/p}(1+\sqrt{c_{\kz}(R^2)}R)\lf\{R^{-1}\lf(\fint_{10B}|u|^2\,d\mu\r)^{1/2}
+R\lf(\fint_{10B}|g|^p\,d\mu\r)^{1/p}\r\}.
\end{eqnarray*}
From this, we conclude that $u$ can be extended to a locally H\"older
continuous function in $\Omega$, which completes
the proof of Corollary \ref{c1.1}.
\end{proof}

\subsection*{Acknowledgment}
\hskip\parindent  The author would like to thank the referees for many kind suggestions.
He is grateful to his supervisor Professor
Pekka Koskela for posing the problem and many kind suggestions,
and wishes to express deeply thanks to Professor Dachun Yang for his encouragement and suggestions,
and to Yuan Zhou for many kind suggestions.

\noindent Renjin Jiang\\
Department of Mathematics and Statistics\\
University of Jyv\"{a}skyl\"{a}\\
P.O. Box 35 (MaD)\\
FI-40014\\
Finland

\noindent{\it E-mail address}: \texttt{renjin.r.jiang@jyu.fi}
\end{document}